\newcommand{\R}{\mathbb{R}}
\newcommand{\C}{\mathbb{C}}
\newcommand{\N}{\mathbb{N}}
\newcommand{\Z}{\mathbb{Z}}
\newcommand{\ve}{\varepsilon}
\newcommand{\lan}{\langle}
\newcommand{\ran}{\rangle}
\DeclareMathOperator{\spa}{span}
\DeclareMathOperator{\tr}{tr}
\DeclareMathOperator{\diag}{diag}
\DeclareMathOperator{\maxroot}{maxroot}
\newcommand{\E}[2]{\mathbb{E}_{#1}\left[ #2 \right] }
\renewcommand{\P}[2]{\mathbb{P}_{#1}\left( #2 \right)}
\renewcommand{\Im}{\operatorname{Im}}
\newcommand{\bI}{\mathbf I}
\newcommand{\ch}{\mathbf 1}
\numberwithin{equation}{section}
\theoremstyle{plain} 
\newtheorem{theorem}{Theorem}[section]
\newtheorem*{theorem*}{Theorem}
\newtheorem{corollary}[theorem]{Corollary}
\newtheorem{lemma}[theorem]{Lemma}
\newtheorem*{problem}{Problem}
\theoremstyle{definition}
\newtheorem{definition}{Definition}[section]
\theoremstyle{remark}
\newtheorem{remark}{Remark}[section]
\begin{document}

\title{The Kadison-Singer Problem}

\author{Marcin Bownik}

\address{Department of Mathematics, University of Oregon, Eugene, OR 97403--1222, USA}
\curraddr{Institute of Mathematics, Polish Academy of Sciences, ul. Wita Stwosza 57,
80--952 Gda\'nsk, Poland}
\email{mbownik@uoregon.edu}

\date{\today}



\thanks{The author is grateful for useful conversations on the Kadison-Singer problem with Bernhard Bodmann, Jean Bourgain, Pete Casazza, Bill Johnson, Adam Marcus, Gideon Schechtman, Darrin Speegle, and participants of the AIM workshop ``Beyond Kadison-Singer: paving and consequences'' in December 2014 and the MSRI workshop ``Hot Topics: Kadison-Singer, Interlacing Polynomials, and Beyond'' in March 2015.
The author was partially supported by NSF grant DMS-1265711 and by a grant from the Simons Foundation \#426295. }

\begin{abstract} We give a self-contained presentation of results related to the Kadison-Singer problem, which was recently solved by Marcus, Spielman, and Srivastava \cite{MSS}. This problem connects with an unusually large number of research areas including: operator algebras (pure states), set theory (ultrafilters), operator theory (paving), random matrix theory, linear and multilinear algebra, algebraic combinatorics (real stable polynomials), algebraic curves, frame theory, harmonic analysis (Fourier frames), and functional analysis. 

\end{abstract}

\maketitle

\section{Introduction}

The goal of this paper is to give a self-contained presentation of mathematics involved in Kadison-Singer problem \cite{KS}. This problem was shown to be equivalent to a large number of problems such as: Anderson paving conjecture \cite{An1, An2, An3}, Bourgain--Tzafriri restricted invertibility conjecture \cite{BT1, BT2, BT3}, Akemann--Anderson projection paving conjecture \cite{AA}, Feichtinger conjecture \cite{BS, CCLV, Gr}, $R_\epsilon$ conjecture \cite{CT}, and Weaver conjecture \cite{We}. The breakthrough solution of the Weaver conjecture \cite{We} by Marcus, Spielman, and Srivastava \cite{MSS} has validated all of these conjectures. 

While a lot has been written about Kadison-Singer problem before it was solved \cite{BT3, BS, CCLV, CE, CEKP, CFTW, CT} and after its solution \cite{CT2, Mat, Tan, Tao, Ti, Va}, we believe that there is still a space and an interest for yet another presentation which would present  consequences of the solution of Kadison-Singer problem in an optimized form. We have aimed at presenting the material in a modular form as a sequence of implications which are largely independent of each other. This paper is the result of the author's attempts at achieving this goal. It is based on a series of lectures on the subject given by the author at University of Oregon, University of Gda\'nsk, Institute of Mathematics of Polish Academy of Sciences, and Tel Aviv University. The author is grateful for the interest and hospitality of these institutions.

The general outline of the paper can be described by the following diagram:
\[
\begin{matrix}

(KS) \Leftarrow (PB) \Leftarrow (PS) \Leftarrow (PR) \Leftarrow (PP_{\frac 12}) \Leftarrow   & \hskip -8pt (KS^\infty_r)  & \hskip -13pt  \Leftarrow   & \hskip -10pt (KS_r)   & \hskip-10pt  \Leftarrow \hskip -2pt   (MSS) \hskip -2pt \Leftarrow \hskip -2pt  (MCP) \\
&\Downarrow& &\hskip -15pt \Downarrow&\\
 \hfill (PP_\delta) \Leftarrow & \hskip -5pt (KS_2^\infty) &  & \hskip -10pt (FEI)  & \hskip -5pt   \Rightarrow  (R_\epsilon)  \Rightarrow (BT) \hfill
\end{matrix}
\]
The above symbols represent abbreviations used throughout the paper. The most important are: the original Kadison-Singer problem $(KS)$, Weaver's conjecture $(KS_r)$, Marcus-Spielman-Srivastava solution $(MSS)$, and its mixed characteristic polynomial formulation $(MCP)$. In this paper we will prove all of the above implications including the proof of the core statement $(MCP)$.

\subsection{Notes} The existing literature discussing the solution of the Kadison-Singer problem is quite varied. It gives a deeper appreciation for the many areas of mathematics this problem has touched. Tao \cite{Tao} has written a beautiful exposition containing a simplified proof of the solution.  Tanbay \cite{Tan} has given a nice entertaining historical perspective on the Kadison-Singer problem.  Matheron \cite{Mat} gives a long and exhaustive exposition (in French), primarily from the viewpoint of operator algebras. Valette \cite{Va} has written a Bourbaki exposition (in French). Casazza, who has worked and popularized the Kadison-Singer problem, has written a joint paper with Tremain \cite{CT2} discussing consequences of the solution. Timotin \cite{Ti} gives another presentation of the proof of the Kadison-Singer problem. Harvey \cite{Ha} gives a gentle introduction aimed at readers without much background in functional analysis or operator theory. Finally, the book by Stevens \cite{St} contains a relatively elementary and self-contained account of the Kadison-Singer problem and its proof.

\section{From Kadison-Singer Problem to Weaver's conjecture}

\subsection{Kadison-Singer Problem} We start with the Kadison-Singer problem \cite{KS}, now a theorem, which was originally formulated in 1959.

\begin{definition}
Let $\mathcal D \subset \mathcal B(\ell^2(\N))$ be the algebra of diagonal matrices. A state $s: \mathcal D \to \C$ is a positive  bounded linear functional $(A\ge 0 \implies s(A) \ge 0)$ such that $s(\mathbf I)=1$. A state is pure if it is not a convex combination of other states.
\end{definition}

\begin{theorem*}[$KS$] \label{KS}
Let $s: \mathcal D \to \C$ be a pure state. Then, there exists a unique state $\tilde s: \mathcal B(\ell^2(\N)) \to \C$ that extends $s$.
\end{theorem*}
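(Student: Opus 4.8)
The plan is to prove $(KS)$ by running the chain of implications displayed above from right to left, reducing the operator-algebraic statement to a sequence of ever more combinatorial ones and finally to the purely algebraic inequality $(MCP)$. The first point is that \emph{existence} of an extending state costs nothing: a state on the unital $C^*$-subalgebra $\mathcal D$ has norm one and value one at $\bI$, and any Hahn--Banach extension to $\mathcal B(\ell^2(\N))$ of norm one with value one at $\bI$ is automatically a state. So the content of $(KS)$ is \emph{uniqueness}, and only for pure $s$.

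To get uniqueness I would first pass to a paving statement (the arrow $(KS)\Leftarrow(PB)$). Let $D:\mathcal B(\ell^2(\N))\to\mathcal D$, $D(T)=\sum_i\lan Te_i,e_i\ran e_ie_i^*$, be the diagonal conditional expectation. If $\tilde s$ extends $s$ and $Q\in\mathcal D$ is a projection with $s(Q)=1$, then $s(\bI-Q)=0$ forces $\tilde s(X)=\tilde s(QXQ)$ for all $X$ by Cauchy--Schwarz; hence $|\tilde s(T-D(T))|\le\|Q(T-D(T))Q\|$. Identifying $\mathcal D$ with $C(\beta\N)$ and pure states with ultrafilters, one sees that $\tilde s$ is forced to equal $s\circ D$ — so is unique — as soon as every self-adjoint $T$ with $D(T)=0$ can be \emph{paved}: for every $\ve>0$ there is a partition $\N=A_1\cup\dots\cup A_r$ (with $r$ independent of $T$) such that $\|Q_{A_j}TQ_{A_j}\|\le\ve\|T\|$ for all $j$, where $Q_A$ is the coordinate projection onto $A$. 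A truncation/compactness argument reduces this to $n\times n$ matrices, and a further string of reductions — paving self-adjoint matrices, then Hermitian projections, then Hermitian projections whose diagonal entries are all at most $\tfrac12$ — carries us along the diagram down to $(PP_{1/2})$ and its infinite companion $(KS_r^\infty)$.

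Next comes Weaver's discrepancy form $(KS_r)$: if $u_1,\dots,u_m\in\C^d$ satisfy $\sum_i u_iu_i^*=\bI$ and $\|u_i\|^2\le\delta$, then $\{1,\dots,m\}$ splits into $r$ blocks $S_j$ with $\big\|\sum_{i\in S_j}u_iu_i^*\big\|\le(\tfrac1{\sqrt r}+\sqrt\delta)^2$; expressing the rows of a projection as such a rank-one resolution of $\bI$ turns $(PP_{1/2})$ into an instance of $(KS_r)$. In turn $(KS_r)$ follows (after a tensoring trick that puts an $r$-fold block structure on $\C^d$) from the Marcus--Spielman--Srivastava theorem $(MSS)$: for independent finitely supported random vectors $v_1,\dots,v_m\in\C^d$ with $\sum_i\EV[v_iv_i^*]=\bI$ and $\EV\|v_i\|^2\le\ve$, one has $\PP\big(\big\|\sum_i v_iv_i^*\big\|\le(1+\sqrt\ve)^2\big)>0$. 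The engine here is the method of interlacing polynomials: the characteristic polynomials of $\sum_i v_iv_i^*$, indexed by the values of the $v_i$, form an interlacing family, so some realization has largest root at most $\maxroot$ of the \emph{expected} characteristic polynomial, which for independent rank-one summands is precisely the mixed characteristic polynomial $\mu[A_1,\dots,A_m](x)$ with $A_i=\EV[v_iv_i^*]$.

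Everything thus reduces to $(MCP)$: if $A_1,\dots,A_m\ge0$ are $d\times d$ with $\sum_i A_i=\bI$ and $\tr A_i\le\ve$, then $\maxroot\mu[A_1,\dots,A_m]\le(1+\sqrt\ve)^2$, where $\mu[A_1,\dots,A_m](x)=\big(\prod_i(1-\partial_{z_i})\big)\det\big(x\bI+\sum_i z_iA_i\big)\big|_{z=0}$. This is where the real work lies. I would develop the basics of real stable polynomials — the operators $1-\partial_{z_i}$ preserve real stability, $\det(x\bI+\sum_i z_iA_i)$ is real stable, and, since $x\bI=\sum_i xA_i$ gives $\det(x\bI+\sum_i z_iA_i)=\det(\sum_i(x+z_i)A_i)$, the univariate $\mu$ is the diagonal restriction of a homogeneous real stable polynomial and hence real-rooted — and then bound $\maxroot\mu$ by a multivariate barrier induction in the variables $y_i$: one starts from $y_i=t$ for all $i$, which lies above every root of $\det(\sum_i y_iA_i)$ and where the barrier functions $\Phi^k(y)=\tr\big[(\sum_j y_jA_j)^{-1}A_k\big]$ equal $\tfrac1t\tr A_k\le\ve/t$; one applies $1-\partial_{y_i}$ one coordinate at a time, each step shifting that coordinate by just enough that the (monotone, convex) barrier functions stay $\le\ve/t<1$; after $m$ steps the diagonal point $y_i=t+(1-\ve/t)^{-1}$ lies above all roots of $\prod_i(1-\partial_{y_i})\det(\sum_i y_iA_i)$, and minimizing $t+(1-\ve/t)^{-1}$ over $t>\ve$ yields $(1+\sqrt\ve)^2$. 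The main obstacle I foresee is establishing that the barrier functions are decreasing and convex in each coordinate above the root set, together with the accompanying sharp per-step estimate; once those are in hand, the remaining links in the chain are comparatively routine.
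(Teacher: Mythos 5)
Your plan follows essentially the same route as the paper: existence of the extension is immediate, and uniqueness is reduced through the paving chain $(PB)\Leftarrow(PS)\Leftarrow\cdots\Leftarrow(PP_{\frac12})$ to Weaver's $(KS_r)$, which is deduced from $(MSS)$ by the interlacing-family argument and finally from $(MCP)$ via real stable polynomials and the multivariate barrier induction, with the same bookkeeping (barrier value $\epsilon/t$ at the starting point $(t,\ldots,t)$, per-coordinate shift $\delta=(1-\epsilon/t)^{-1}$, and optimization of $t+\delta$ giving $(1+\sqrt{\epsilon})^2$). The ingredients you single out as the remaining work --- monotonicity and convexity of the barrier functions above the roots and the per-step estimate --- are precisely the lemmas the paper supplies, so the proposal is a correct blueprint of the same proof.
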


The original formulation of Kadison-Singer problem involves the concept of a maximal abelian self-adjoint (MASA) subalgebra $\mathcal A$ of $\mathcal B(\mathcal H)$, where $\mathcal H$ is an infinite-dimensional separable Hilbert space. Kadison and Singer \cite{KS} have shown that every MASA $\mathcal A$ decomposes into discrete and continuous parts. More precisely, there exists an orthogonal decomposition $\mathcal H = \mathcal H_d \oplus \mathcal H_c$ with $P$ denoting the orthogonal projection of $\mathcal H$ onto $\mathcal H_d$ such that:
\begin{itemize} 
\item $\mathcal A_d = \{ P A|_{\mathcal H_d}: A\in \mathcal A\}$ is a discrete MASA, i.e., $\mathcal A_d$ is the commutant of the set of its minimal projections,
\item $\mathcal A_c = \{ (\mathbf I -P) A|_{\mathcal H_c}: A\in \mathcal A\}$ is a continuous MASA, i.e., $\mathcal A_c$ contains no minimal projections.
\end{itemize}
An example of a continuous MASA $\mathcal A_c$ are multiplication operators on $L^2[0,1]$ by functions in $L^\infty[0,1]$. Kadison and Singer have shown that as long as $\mathcal H_c \ne  \{0\}$, there exists a pure state on $\mathcal A$, which has non-unique state extensions on $\mathcal B(\mathcal H)$. They have hinted \cite[\S5]{KS} that the same might hold in general though they were careful to state this in a form of a question, rather than a conjecture.

\begin{problem}[Kadison-Singer] \label{KSO}
Let $\mathcal H$ be an infinite-dimensional separable Hilbert space and let $\mathcal A$ be a discrete maximal abelian self-adjoint subalgebra (MASA) of $\mathcal B(\mathcal H)$. Does every pure state on $\mathcal A$ extend to a unique pure state on $\mathcal B( \mathcal H)$?
\end{problem}

One can show that every discrete MASA $\mathcal A$ is unitarily equivalent with the diagonal matrix algebra $\mathcal D$ in Theorem $(KS)$.  That is, there exists a unitary $U: \mathcal H \to \ell^2(\N)$ such that $\mathcal A = U^*\mathcal D U$. Hence, Theorem $(KS)$ gives an affirmative answer to the Kadison-Singer problem.

Clearly, the diagonal matrix algebra $\mathcal D$ is isometrically isomorphic with $\ell^\infty(\N)$. That is, $x\in \ell^\infty(\N)$ corresponds to diagonal operator $\diag(x)$ with sequence $x$ on the main diagonal. Pure states on $\ell^\infty(\N)$ can be described in terms of the ultrafilters. 

\begin{definition} Let $\mathcal F$ be a collection of non-empty subsets of $\N$. We say that $\mathcal F$ is a filter if:
\begin{enumerate}
\item if $F_1,\ldots, F_n\in \mathcal F$, $n\ge 1$, then $F_1 \cap \ldots \cap F_n \in \mathcal F$,
\item if $F \in \mathcal F$ and $F\subset G \subset \N$, then $G \in \mathcal F$.
\end{enumerate}
We say that $\mathcal U$ is an ultrafilter, if it is a maximal filter with respect to the inclusion partial order. Equivalently, 
\begin{enumerate}
\item[(iii)] for any $A\subset \N$, either $A\in \mathcal U$ or $\N \setminus A \in \mathcal U$.
\end{enumerate}
\end{definition}

Given an ultrafliter $\mathcal U$, we can define the concept of a limit of a bounded sequence on $\N$.

\begin{definition} Fix an ultrafilter $\mathcal U$. Let $x=(x_j)_{j\in\N} \in \ell^\infty(\N)$. For any subset $A\subset \N$, define
\[
C_A= \overline{ \{x_j : j \in A \}} \subset \C.
\]
We define 
\[
\lim_{\mathcal U} x =x_0 \iff \{x_0\} = \bigcap_{A \in\mathcal U} C_A.
\]
\end{definition}

It is easy to see that the above limit is always well-defined. The intersection of any finite family of compact sets $C_A$, $A\in\mathcal U$ is non-empty. Hence, the entire intersection is non-empty as well. Moreover, it consists of exactly one point. On the contrary, if it contained two points $x_0 \ne \tilde x_0$, then we would consider the set
\[
A= \{ j\in \N: |x_j-x_0| < |x_0 - \tilde x_0|/2 \}.
\]
Then by the ultrafilter property (iii), we have two possibilities. Either $A \in \mathcal U$, which forces $\tilde x_0$ outside the intersection $\bigcap_{A \in\mathcal U} C_A$, or $\N \setminus A \in \mathcal U$, which forces $\tilde x_0$ outside. Either way, the above intersection must be a singleton.

\begin{lemma} There is one-to-one correspondence between pure state on the algebra $\mathcal D$ of diagonal matrices and ultrafilters on $\N$. More precisely, each pure state $s$ on $\mathcal D  \cong \ell^\infty(\N)$ is of the form
\begin{equation}\label{lim}
s(\diag(x)) = \lim_\mathcal U x \qquad\text{for all }x\in \ell^\infty(\N)
\end{equation}
for some unique ultrafilter $\mathcal U$.
\end{lemma}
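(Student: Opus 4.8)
The plan is to establish the bijection in two directions. First, given an ultrafilter $\mathcal U$, I would verify that the formula \eqref{lim} defines a pure state. Linearity follows from the standard fact that $\mathcal U$-limits respect addition and scalar multiplication (which one checks by noting that for $x,y\in\ell^\infty(\N)$ and $\ve>0$, the set where $|x_j-\lim_{\mathcal U}x|<\ve$ and $|y_j-\lim_{\mathcal U}y|<\ve$ lies in $\mathcal U$, being a finite intersection of sets in $\mathcal U$, so on that set $x_j+y_j$ is within $2\ve$ of the sum of the limits). Positivity is immediate since if $\diag(x)\ge 0$ then $x_j\ge 0$ for all $j$, so every $C_A\subset[0,\infty)$ and the limit is nonnegative; and $s(\mathbf I)=\lim_{\mathcal U}(1,1,\ldots)=1$. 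Boundedness follows from $|\lim_{\mathcal U}x|\le\|x\|_\infty$. For purity, I would use the characterization that a state is pure iff it is multiplicative (a character) on the commutative C*-algebra $\ell^\infty(\N)$; multiplicativity of the $\mathcal U$-limit is checked the same way as additivity. Alternatively, one can argue directly: $\mathcal U$-limits are $\{0,1\}$-valued on characteristic functions $\1_A$ (value $1$ if $A\in\mathcal U$, else $0$, by property (iii)), and a state that is extremal on projections is extremal.

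Second, given a pure state $s$ on $\mathcal D\cong\ell^\infty(\N)$, I would recover the ultrafilter by setting
\[
\mathcal U = \{ A\subset\N : s(\diag(\1_A)) = 1 \}.
\]
The key observation is that since $\1_A$ is a projection (idempotent self-adjoint element), $s(\diag(\1_A))\in[0,1]$, and purity forces $s(\diag(\1_A))\in\{0,1\}$: indeed, a pure state is multiplicative, so $s(\diag(\1_A))=s(\diag(\1_A)^2)=s(\diag(\1_A))^2$, whence the value is $0$ or $1$. I would then check that $\mathcal U$ is an ultrafilter: it is closed under finite intersections because $\1_{A\cap B}=\1_A\1_B$ and multiplicativity gives $s(\diag(\1_{A\cap B}))=s(\diag(\1_A))s(\diag(\1_B))$; it is upward closed because $A\subset G$ implies $\1_G=\1_A+\1_{G\setminus A}$ with both summands giving nonnegative values summing to $s(\diag(\1_G))\le 1$, forcing $s(\diag(\1_G))=1$ when $s(\diag(\1_A))=1$; and property (iii) holds because $\1_A+\1_{\N\setminus A}=\mathbf I$ gives $s(\diag(\1_A))+s(\diag(\1_{\N\setminus A}))=1$, so exactly one is $1$. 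Finally $\mathcal U$ is nonempty (contains $\N$) and does not contain $\emptyset$ (since $s(0)=0$).

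To close the loop, I would show that $s$ agrees with $\lim_{\mathcal U}$ on all of $\ell^\infty(\N)$ and that the correspondence is injective. For a real-valued $x\in\ell^\infty(\N)$ and any $\ve>0$, partitioning $\R$ into finitely many intervals of length $<\ve$ covering the range of $x$, exactly one of the corresponding preimage sets, say $A$, lies in $\mathcal U$ (by finite additivity of $s$ on the associated projections and property (iii) applied repeatedly). Then on $A$ we have $|x_j-\lim_{\mathcal U}x|<\ve$, and writing $\diag(x)=\diag(x\1_A)+\diag(x\1_{\N\setminus A})$, applying $s$, using $s(\diag(\1_{\N\setminus A}))=0$ together with $\|x\1_{\N\setminus A}\|_\infty\le\|x\|_\infty$ and the bound $|s(\diag(y))|\le\|y\|_\infty s(\diag(\1_{\mathrm{supp}\,y}))$ (which follows from positivity applied to $\|y\|_\infty\1_{\mathrm{supp}\,y}\pm y$), one gets $|s(\diag(x))-\lim_{\mathcal U}x|\le 2\ve$; letting $\ve\to 0$ gives equality. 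Complex $x$ follows by splitting into real and imaginary parts. Uniqueness of $\mathcal U$ is then automatic: if $\mathcal U_1\ne\mathcal U_2$ both represented $s$, pick $A\in\mathcal U_1\setminus\mathcal U_2$; then $s(\diag(\1_A))=\lim_{\mathcal U_1}\1_A=1$ but also $=\lim_{\mathcal U_2}\1_A=0$, a contradiction.

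The main obstacle is the purity-to-multiplicativity step. The cleanest route is to invoke the general C*-algebra fact that pure states on a commutative C*-algebra $C(X)$ are exactly the evaluation characters, i.e., the extreme points of the state space correspond to points of the maximal ideal space; equivalently a state on a commutative C*-algebra is pure iff it is a $*$-homomorphism. If the paper prefers a self-contained argument, one proves directly that a pure state $s$ satisfies $s(p)\in\{0,1\}$ for every projection $p$: if $0<s(p)<1$, then $s=s(p)\,s_1+(1-s(p))\,s_2$ where $s_1(a)=s(pap)/s(p)$ and $s_2(a)=s((\mathbf I-p)a(\mathbf I-p))/(1-s(p))$ are states (using that $\ell^\infty(\N)$ is commutative so $pap=pa=ap$), and $s_1\ne s_2$ because $s_1(p)=1\ne 0=s_2(p)$, contradicting purity. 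Since projections in $\mathcal D$ correspond exactly to characteristic functions and they span a dense subspace of $\ell^\infty(\N)$ in norm (simple functions are dense), the $\{0,1\}$-valuation on projections together with finite additivity determines $s$, and the rest of the argument above goes through. Everything else is routine verification of the filter axioms and an $\ve$-approximation.
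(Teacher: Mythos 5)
Your argument is correct and follows essentially the same route as the paper's own proof: you extract $\mathcal U$ from the values of $s$ on diagonal projections, use the convex-decomposition argument to show purity forces $\{0,1\}$-values on projections (this is exactly the paper's ``direct argument,'' with the C*-algebra character route paralleling its \v Cech-Stone alternative), verify the ultrafilter axioms, and pass from indicator functions to all of $\ell^\infty(\N)$ by norm approximation. The only difference is that you also verify the converse direction, that every ultrafilter limit is a pure state, which the paper leaves implicit.
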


\begin{proof} Suppose that $s$ is a pure state on $\ell^\infty(\N)$. For $A\subset \N$, let $P_A$ be the orthogonal projection of $\ell^2(\N)$ onto $\overline\spa\{e_j: j \in A\}$, where $\{e_j\}_{j\in \N}$ is a standard o.n. basis of $\ell^2(\N)$. Define
\[
\mathcal U = \{ A \subset \N: s(P_A) =1\}.
\]
We claim that $\mathcal U$ is an ultrafilter. This can be shown in two ways. The \v Cech-Stone compactification of $\N$ is a maximal compact Hausdorff space $\beta \N$, which contains $\N$ as a dense subset. By the universal property of $\beta \N$, the space $\ell^\infty(\N)$ is isometrically isomorphic with $C(\beta \N)$. By the Riesz representation theorem, positive functionals on $C(\beta \N)$ are identified with positive regular Borel measures. Hence, $s$ corresponds to a probabilistic measure on $\beta \N$. In addition, since $s$ is pure, this measure must be a point measure on $\beta \N$. In particular, $s$ is commutative. Hence, $s(P_A)=s(P_A)^2$, which implies that $s(P_A) \in \{0,1\}$. Likewise,
\begin{equation}\label{AB}
s(P_{A\cap B}) = s(P_A)s(P_B) \qquad\text{for any }A,B \subset \N,
\end{equation}
which implies that $\mathcal U$ is an ultrafilter.

This can also be seen by a direct argument as follows. Since $P_A$ and $\mathbf I -P_A=P_{\N \setminus A}$ are both positive, we have $0\le s(P_A) \le s(\mathbf I)=1$. Suppose that $s(P_A)=\theta$ for some $0<\theta<1$. Then, we can write $s=\theta s_1+ (1-\theta) s_2$, where $s_1(T)=\frac1\theta s(P_A T)$ and $s_2(T)=\frac{1}{1-\theta}s((\mathbf I - P_A)T)$ for $T\in \mathcal D$. It is easy to show that $s_1$ and $s_2$ are states on $\mathcal D$, which contradicts that $s$ is a pure state. Consequently, $s(P_A) \in \{0,1\}$. By the positivity of $s$, it is clear that \eqref{AB} holds if either $s(P_A)=0$ or $s(P_B)=1$. Now, if $s(P_A)=s(P_B)=1$, then 
\[
s(P_{\N \setminus (A\cap B)}) = s(P_{(\N \setminus A) \cup (\N \setminus B)}) \le s(P_{\N \setminus A})+s(P_{\N \setminus B}) = 0,
\]
This shows \eqref{AB}, which again implies that $\mathcal U$ is an ultrafilter.

Every $x\in \ell^\infty(\N)$ can be approximated in norm by simple functions, i.e., finite linear combinations of indicator functions $\ch_{A_i}$ for disjoint subsets $A_i \subset \N$, $i=1,\ldots, n$. By definition
\[
s(\diag(\ch_{A_i}))= s(P_{A_i}) = \lim_{\mathcal U} \ch_{A_i}= \begin{cases} 1 & A_i \in\mathcal U,\\
0 & \text{otherwise.}
\end{cases}
\]
Thus, \eqref{lim} holds for indicator functions and by the density argument for all $x\in \ell^\infty(\N)$. This implies that two distinct pure states must correspond to distinct ultrafilters, which shows one-to-one correspondence.
\end{proof}

\subsection{Paving conjectures} To crack Theorem ($KS$) Anderson has proposed the concept of paving. We will adopt the following definition.

\begin{definition} Let $T \in \mathcal B(\ell^2(I))$, where $I$ is at most countable. We say that $T$ has $(r,\ve)$-paving if there exists a partition $\{A_1, \ldots ,  A_r\}$ of $I$ such that
\begin{equation}\label{pd}
||P_{A_j} T P_{A_j} || \le \ve ||T|| \qquad\text{for }j=1,\ldots,r.
\end{equation}
Here, for $A\subset I$, let $P_A$ be the orthogonal projection of $\ell^2(I)$ onto $\overline\spa\{e_i: i \in A\}$, where $\{e_i\}_{i\in I}$ is a standard o.n. basis of $\ell^2(I)$.
\end{definition}

The following result states the paving conjecture for bounded operators with zero diagonal.

\begin{theorem*}[$PB$] For every $\ve>0$, there exists $r=r(\ve)$ such that every $T \in \mathcal B(\ell^2(I))$ with zero diagonal can be $(r,\ve)$-paved.
\end{theorem*}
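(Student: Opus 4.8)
The natural approach is to reduce $(PB)$ to its self-adjoint analogue $(PS)$, which is the next link in the chain and where the real difficulty resides; the step here is essentially bookkeeping. First I would dispose of the case $T = 0$, and otherwise rescale so that $\|T\| = 1$. Writing $T = A + iB$ with $A = \frac12(T + T^*)$ and $B = \frac1{2i}(T - T^*)$, both $A$ and $B$ are self-adjoint, both have zero diagonal (the diagonal of $T^*$ is the complex conjugate of that of $T$, hence zero), and $\|A\|, \|B\| \le \|T\| = 1$.

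Next I would apply the self-adjoint paving statement $(PS)$, which the paper takes up next, with precision $\ve/2$ to each of $A$ and $B$: this yields an integer $r_0 = r_0(\ve/2)$ depending only on $\ve$, together with partitions $\{A_1, \dots, A_{r_0}\}$ and $\{B_1, \dots, B_{r_0}\}$ of $I$ such that $\|P_{A_j} A P_{A_j}\| \le \ve/2$ and $\|P_{B_k} B P_{B_k}\| \le \ve/2$ for all $j$ and $k$. Then I would pass to the common refinement $\{C_{j,k}\}_{1 \le j,k \le r_0}$ with $C_{j,k} = A_j \cap B_k$, which is a partition of $I$ into at most $r_0^2$ sets.

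The verification is routine: for a block $C = C_{j,k}$ one has $C \subset A_j$, so $P_C = P_C P_{A_j} = P_{A_j} P_C$ and hence $P_C A P_C = P_C (P_{A_j} A P_{A_j}) P_C$, which gives $\|P_C A P_C\| \le \|P_{A_j} A P_{A_j}\| \le \ve/2$; the same argument with $C \subset B_k$ gives $\|P_C B P_C\| \le \ve/2$. By the triangle inequality, $\|P_C T P_C\| \le \|P_C A P_C\| + \|P_C B P_C\| \le \ve = \ve\|T\|$. Thus $\{C_{j,k}\}$ is an $(r_0^2, \ve)$-paving of $T$, and one takes $r(\ve) = r_0(\ve/2)^2$.

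The only genuine obstacle is $(PS)$ itself, i.e.\ the existence of $(r, \ve)$-pavings for self-adjoint zero-diagonal operators with $r$ depending on $\ve$ alone and not on the operator or on the index set $I$. Everything else is uniform bookkeeping: the real/imaginary decomposition costs a factor $2$ in $\ve$ and intersecting two partitions squares the block count, but both resulting constants still depend only on $\ve$. The substantive reductions $(PS) \Leftarrow (PR) \Leftarrow (PP_{1/2}) \Leftarrow (KS^\infty_r) \Leftarrow (KS_r) \Leftarrow (MSS)$ are carried out later, culminating in the direct proof of $(MCP)$.
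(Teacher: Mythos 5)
Your reduction is correct and is essentially the paper's own argument: the paper proves $(PB)$ from $(PS)$ via the same decomposition $T=\frac{T+T^*}2+\frac{T-T^*}2$ into self-adjoint (zero-diagonal) parts, paving each and passing to the common refinement, recorded as ``$(PS)$ holds for $(r,\ve)$ $\implies$ $(PB)$ holds for $(r^2,2\ve)$.'' The only cosmetic difference is that you fold the factor $2$ into the choice $\ve/2$ rather than stating the loss explicitly, which is immaterial since $(PS)$ is available for every $\ve>0$.
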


We are now ready to establish the first implication in our scheme.

\begin{lemma} $(PB) \implies (KS)$.
\end{lemma}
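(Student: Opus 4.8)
The plan is to reduce the uniqueness of the state extension to a single assertion: \emph{every} state extension of a pure state $s$ on $\mathcal D$ annihilates operators with zero diagonal. Existence of at least one extension $\tilde s:\mathcal B(\ell^2(\N))\to\C$ is the standard extension theorem for states (Hahn--Banach, together with the fact that a norm-one functional sending $\mathbf I$ to $1$ is automatically positive), so only uniqueness needs an argument. Any two extensions $\tilde s_1,\tilde s_2$ agree on $\mathcal D$; and every $A\in\mathcal B(\ell^2(\N))$ splits as $A=\diag(d)+T$, where $d=(A_{ii})_{i\in\N}$ is a legitimate bounded diagonal operator (since $\|d\|_\infty\le\|A\|$) and $T$ has zero diagonal. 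Hence $\tilde s_1(A)=s(\diag(d))+\tilde s_1(T)$ and likewise for $\tilde s_2$, so it is enough to show $\tilde s(T)=0$ for every state extension $\tilde s$ and every zero-diagonal $T$.

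Next I would record the one soft ingredient. If $\tilde s$ is a state on $\mathcal B(\ell^2(\N))$ and $P$ is an orthogonal projection with $\tilde s(P)=1$, then $\tilde s(A)=\tilde s(PAP)$ for all $A$. Indeed, Cauchy--Schwarz for the positive semidefinite form $(A,B)\mapsto\tilde s(B^*A)$ and $\tilde s(\mathbf I-P)=0$ give $|\tilde s((\mathbf I-P)A)|^2\le\tilde s(\mathbf I-P)\tilde s(A^*A)=0$, and, taking adjoints, $\tilde s(C(\mathbf I-P))=0$ for all $C$; combining these, $\tilde s(A)=\tilde s(PA)=\tilde s(PAP)$.

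Now fix a zero-diagonal $T$ and $\ve>0$. Apply $(PB)$ to get a partition $\{A_1,\ldots,A_r\}$ of $\N$ with $\|P_{A_j}TP_{A_j}\|\le\ve\|T\|$ for every $j$. Since the blocks partition $\N$ we have $P_{A_1}+\cdots+P_{A_r}=\mathbf I$, hence $\sum_j s(P_{A_j})=s(\mathbf I)=1$; moreover each $s(P_{A_j})\in\{0,1\}$ because $s$ is pure (the projection $P_{A_j}$ lies in $\mathcal D$, and a value in $(0,1)$ would exhibit $s$ as a nontrivial convex combination, exactly as in the proof of the lemma above; equivalently, exactly one block lies in the ultrafilter $\mathcal U$ attached to $s$). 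So there is a unique $k$ with $s(P_{A_k})=1$, whence $\tilde s(P_{A_k})=1$ for any extension $\tilde s$, and by the previous step $|\tilde s(T)|=|\tilde s(P_{A_k}TP_{A_k})|\le\|P_{A_k}TP_{A_k}\|\le\ve\|T\|$. Letting $\ve\to0$ yields $\tilde s(T)=0$, which finishes the argument.

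I do not expect a genuine obstacle here: all the depth is packaged inside $(PB)$, and the passage from $(PB)$ to $(KS)$ is entirely formal. The only points deserving a moment's attention are the Cauchy--Schwarz ``compression'' identity $\tilde s(A)=\tilde s(PAP)$ when $\tilde s(P)=1$, and the fact that a \emph{finite} partition of $\N$ has precisely one block on which the pure state $s$ takes the value $1$ --- both of which are exactly what converts the norm bound $\|P_{A_k}TP_{A_k}\|\le\ve\|T\|$ into a bound on $|\tilde s(T)|$.
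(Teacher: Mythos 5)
Your proposal is correct and follows essentially the same route as the paper: reduce uniqueness to the vanishing of $\tilde s$ on zero-diagonal operators, use purity of $s$ to find the single block $A_k$ of the paving with $s(P_{A_k})=1$, and apply Cauchy--Schwarz for the form $(A,B)\mapsto\tilde s(B^*A)$ to compress $\tilde s(T)$ to $\tilde s(P_{A_k}TP_{A_k})$, which is bounded by $\ve\|T\|$. The only cosmetic differences are that you package the compression as a general ``$\tilde s(P)=1\Rightarrow\tilde s(A)=\tilde s(PAP)$'' lemma and invoke Hahn--Banach for existence, where the paper sums over blocks and uses the conditional expectation $s\circ\mathbb E$ for existence.
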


\begin{proof} Let $\mathbb E : \mathcal B(\ell^2(\N)) \to \mathcal D$ be the non-commutative conditional expectation which erases all off-diagonal entries. That is, for any $T \in \mathcal B(\ell^2(\N))$, let $\mathbb E(T)$ be the diagonal operator which has the same diagonal entries as $T$. Let $s$ be any pure state on $\mathcal D$. It is easy to show that $\tilde s(T) = s(\mathbb E(T))$, $T \in \mathcal B(\ell^2(\N))$, defines a state extending $s$. Hence, the difficult part is showing the uniqueness. 

Let $\tilde s: \mathcal B(\ell^2(\N)) \to \C$ be any state extending $s$. Since
\[
\tilde s(T) = \tilde s(T - \mathbb E (T)) + s(\mathbb E (T)),
\]
it suffices to show that 
\begin{equation}\label{uq}
\tilde s(T)=0 \qquad\text{for all } T \in \mathcal B(\ell^2(\N)) \text{ with } \mathbb E(T)= \mathbf 0.
\end{equation}
By $(PB)$ for any $\ve>0$ we can find $A_1, \ldots, A_r$ such that \eqref{pd} holds. By the ultrafilter property, there exists $ j_0 \in [r]:=\{1,\ldots,r\} $ such that
\[
s(P_{A_j}) = \delta_{j,j_0} \qquad \text{for }j \in [r].
\]
One can easily verify that
\[
\lan T_1, T_2 \ran := \tilde s( T_1 T_2^*) \qquad T_1,T_2 \in \mathcal B(\ell^2(\N)),
\]
defines a semidefinite inner product on $\mathcal B(\ell^2(\N))$. In particular, by the Cauchy-Schwarz inequality we have
\[
|\tilde s( T_1 T_2^*)|^2 \le \tilde s( T_1 T_1^*) \tilde s( T_2 T_2^*).
\]
Thus, for any $j\ne j_0$ and $R\in \mathcal B(\ell^2(\N))$ we have
\[
0=\tilde s(RP_{A_j}) = \tilde s(P_{A_j}R).
\]
We conclude that
\[
\tilde s(T) = \sum_{j=1}^r \tilde s(T P_{A_{j}}) = \tilde s(T P_{A_{j_0}}) = \tilde s(P_{A_{j_0}}T P_{A_{j_0}}).
\]
Thus, $|\tilde s(T)| \le ||P_{A_{j_0}}T P_{A_{j_0}}|| \le \ve ||T||$. Since $\ve>0$ is arbitrary, this shows \eqref{uq}.
\end{proof}

Paving conjectures can be formulated for smaller classes  of operators than bounded $(PB)$ such as: self-adjoint operators $(PS)$, reflections $(PR)$, and projections $(PP_{\frac12})$.

\begin{theorem*}[$PS$] For every $\ve>0$, there exists $r=r(\ve)$ such that every self-adjoint operator $S$ on $\ell^2(I)$ with zero diagonal can be $(r,\ve)$-paved.
\end{theorem*}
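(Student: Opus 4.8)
The plan is to prove $(PS)$ not directly but by reducing it to the paving statement for reflections (self-adjoint unitaries), $(PR)$, which appears further along in the scheme: every self-adjoint operator with zero diagonal is, after a harmless rescaling, the compression of a reflection acting on a doubled index set, and the paving property passes to compressions. So the real content will be imported from $(PR)$, and from there eventually from $(PP_{1/2})$ and the finite-dimensional Marcus--Spielman--Srivastava estimate.

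First normalize: if $S=\mathbf 0$ there is nothing to prove, and otherwise, replacing $S$ by $S/\|S\|$, we may assume $\|S\|\le 1$, since a partition witnessing $(r,\ve)$-paving for $S/\|S\|$ witnesses it for $S$ as well. Next, work on $\ell^2(I\times\{1,2\})\cong\ell^2(I)\oplus\ell^2(I)$ and set $T=\sqrt{\bI-S^2}\ge 0$, defined by the continuous functional calculus (legitimate since $\sigma(S)\subset[-1,1]$), so that $T=T^*$, $TS=ST$, and $S^2+T^2=\bI$. Consider the block operator
\[
R=\begin{pmatrix} S & T \\ T & -S\end{pmatrix}.
\]
Then $R=R^*$, and a one-line computation using $S^2+T^2=\bI$ and $ST=TS$ gives $R^2=\bI$, so $R$ is a reflection with $\|R\|=1$; its main diagonal vanishes because $S$ has zero diagonal (the entries of $T$ sit off the diagonal of the $2\times 2$ array). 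Applying $(PR)$ to $R$ with the same $\ve$ yields a partition $\{B_1,\dots,B_r\}$ of $I\times\{1,2\}$ with $\|P_{B_j}RP_{B_j}\|\le\ve$ for all $j$. Put $A_j=\{i\in I:(i,1)\in B_j\}$; these partition $I$. Since the compression of $R$ to $I\times\{1\}$ equals $S$, the compression of $R$ to $B_j\cap(I\times\{1\})$, identified with $A_j$, equals $P_{A_j}SP_{A_j}$, and as $B_j\cap(I\times\{1\})\subset B_j$ we get $\|P_{A_j}SP_{A_j}\|\le\|P_{B_j}RP_{B_j}\|\le\ve$. Undoing the rescaling, this is exactly the $(r,\ve)$-paving of $S$ required by $(PS)$.

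The main obstacle is therefore pushed entirely out of the reduction above and into $(PR)$: proving the paving conjecture for reflections. In carrying out the rest of the chain I would expect the further reduction $(PP_{1/2})\Rightarrow(PR)$ (via $R=2Q-\bI$ for a projection $Q$ with constant diagonal $\tfrac12$) to be of exactly the same elementary flavor, and the genuinely delicate steps to lie elsewhere: in passing from the finite-dimensional Weaver estimate $(KS_r)$ to its infinite-dimensional form $(KS^\infty_r)$ (a compactness/ultrafilter argument, not an argument about the operator being paved), and above all in the core probabilistic argument behind $(MSS)$ and its mixed characteristic polynomial formulation $(MCP)$.
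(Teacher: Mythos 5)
Your reduction is exactly the paper's argument: normalize $\|S\|=1$, form the reflection $\begin{pmatrix} S & \sqrt{\bI-S^2} \\ \sqrt{\bI-S^2} & -S\end{pmatrix}$ on the doubled index set, apply $(PR)$, and restrict the resulting partition to $I$, which preserves the paving bound since compressing to a smaller coordinate subspace cannot increase the norm. The proof is correct and takes essentially the same route as the paper.
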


\begin{theorem*}[$PR$] For every $\ve>0$, there exists $r=r(\ve)$ such that every reflection $R$ on $\ell^2(I)$,  i.e., $R=R^*$ and $R^2=\mathbf I$, with zero diagonal can be $(r,\ve)$-paved.
\end{theorem*}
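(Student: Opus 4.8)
The statement $(PR)$ concerns reflections, which form a subclass of the self-adjoint operators, so the inclusion alone already yields the implication $(PS)\implies(PR)$ for free; but this is not the direction that is useful for the overall scheme. The plan is instead to deduce $(PR)$ from the (formally weaker) projection paving statement $(PP_{\frac12})$, exploiting the standard bijection between reflections with zero diagonal and orthogonal projections with constant diagonal $\tfrac12$.

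Given a reflection $R$ on $\ell^2(I)$ with $\mathbb E(R)=\mathbf 0$, set $P=\tfrac12(\mathbf I+R)$. Since $R^*=R$ and $R^2=\mathbf I$, a short computation gives $P^*=P$ and $P^2=P$, so $P$ is an orthogonal projection, and every diagonal entry of $P$ equals $\tfrac12(1+0)=\tfrac12$, i.e.\ $\mathbb E(P)=\tfrac12\mathbf I$. Conversely $R=2P-\mathbf I=2\bigl(P-\tfrac12\mathbf I\bigr)$, and for any $A\subset I$, using $P_A^2=P_A$,
\[
P_A R P_A = 2P_A P P_A - P_A = 2P_A\Bigl(P-\tfrac12\mathbf I\Bigr)P_A ,
\]
so $\|P_A R P_A\| = 2\,\bigl\|P_A P P_A-\tfrac12 P_A\bigr\|$. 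Hence a partition $\{A_1,\ldots,A_r\}$ of $I$ with $\bigl\|P_{A_j}PP_{A_j}-\tfrac12 P_{A_j}\bigr\|\le\ve/2$ for every $j$ --- precisely what projection paving $(PP_{\frac12})$ delivers, with $r=r(\ve/2)$ --- yields $\|P_{A_j}RP_{A_j}\|\le\ve=\ve\|R\|$, since $\|R\|^2=\|R^*R\|=\|R^2\|=1$. This proves $(PR)$ with $r_{PR}(\ve)=r_{PP_{\frac12}}(\ve/2)$.

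In this way the entire burden is transferred to $(PP_{\frac12})$, and that --- not the two-line correspondence above --- is where the difficulty sits. Establishing $(PP_{\frac12})$ requires a finitization step, reducing the statement on $\ell^2(I)$ to a uniform bound for finite matrices (for instance by paving every finite principal submatrix and assembling a paving of $R$ itself along an ultrafilter, after which only the finite assertion is needed), together with the finite-dimensional projection-paving estimate extracted from the Marcus--Spielman--Srivastava theorem $(MSS)$ by writing a finite projection of diagonal $\tfrac12\mathbf I$ as a sum of rank-one positive semidefinite pieces and applying its discrepancy bound. I expect the genuine obstacle to be this last step, and in particular obtaining the \emph{two-sided} control $\bigl\|P_{A_j}PP_{A_j}-\tfrac12 P_{A_j}\bigr\|\le\ve$ from the one-sided MSS estimate (e.g.\ by applying it simultaneously to decompositions of $P$ and of $\mathbf I-P$ and passing to a common refinement); the reflection-to-projection reduction itself is entirely routine.
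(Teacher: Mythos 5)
Your reduction is, at bottom, the paper's: pass from the reflection $R$ to the projection $Q=\tfrac12(\mathbf I+R)$ and transfer paving statements across $R=2Q-\mathbf I$. However, the middle step misquotes what $(PP_{\frac12})$ actually delivers. As stated, an $(r,\frac{1+\ve}2)$-paving of $Q$ gives only the one-sided estimate $\mathbf 0\le P_{A_j}QP_{A_j}\le\frac{1+\ve}{2}P_{A_j}$; it does \emph{not} bound $\bigl\|P_{A_j}QP_{A_j}-\tfrac12P_{A_j}\bigr\|$ by $\ve/2$, because nothing in that estimate prevents $P_{A_j}QP_{A_j}$ from having eigenvalues near $0$ on the range of $P_{A_j}$, in which case $\bigl\|P_{A_j}QP_{A_j}-\tfrac12P_{A_j}\bigr\|$ is close to $\tfrac12$ and $\|P_{A_j}RP_{A_j}\|$ is close to $1$. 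Equivalently, the upper paving bound on $Q$ yields $P_{A_j}RP_{A_j}\le\ve P_{A_j}$ but only the trivial lower bound $-P_{A_j}\le P_{A_j}RP_{A_j}$. So the parenthetical ``precisely what projection paving $(PP_{\frac12})$ delivers, with $r=r(\ve/2)$'' is not correct as written, and the claimed conclusion $\|P_{A_j}RP_{A_j}\|\le\ve$ does not follow from paving $Q$ alone.

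You do name the right repair in your closing remarks---apply the paving statement simultaneously to $Q$ and to $\mathbf I-Q=\tfrac12(\mathbf I-R)$ and pass to a common refinement---but you locate it inside the (here irrelevant) derivation of $(PP_{\frac12})$ from $(MSS)$, whereas it is needed in the reduction itself. That is exactly how the paper argues: paving $Q$ by $\{A_i\}$ gives $P_{A_i}RP_{A_i}\le\ve P_{A_i}$, paving $\mathbf I-Q$ by $\{B_j\}$ gives $-\ve P_{B_j}\le P_{B_j}RP_{B_j}$, and on $C_{i,j}=A_i\cap B_j$ both bounds hold, so $\|P_{C_{i,j}}RP_{C_{i,j}}\|\le\ve$. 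The cost is a partition of size $r^2$ rather than your claimed $r(\ve/2)$, which is harmless since $(PR)$ only asks for the existence of some $r(\ve)$. Finally, the finitization/MSS discussion in your last paragraph is extraneous for this statement: in the paper's chain of implications $(PP_{\frac12})$ is available as a hypothesis, and the whole content of the step is the two-sided reduction just described.
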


\begin{theorem*}[$PP_{\frac12}$] For every $\ve>0$, there exists $r=r(\ve)$ such that every projection $P$ on $\ell^2(I)$,  i.e., $P=P^*$ and $P^2=P$, with all diagonal entries equal to $ \frac{1}{2}$ can be $(r,\frac{1+\ve}2)$-paved.
\end{theorem*}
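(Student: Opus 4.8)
The plan is to view $(PP_{\frac12})$ as the base case of the paving hierarchy and to derive it from the partitioning form of Weaver's conjecture, and ultimately from the Marcus--Spielman--Srivastava theorem.

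First I would translate the statement into a question about frames. Given a projection $P$ on $\ell^2(I)$ with diagonal identically $\tfrac12$, put $\xi_i=Pe_i$ for $i\in I$; then $\|\xi_i\|^2=\lan Pe_i,e_i\ran=\tfrac12$, the family $\{\xi_i\}_{i\in I}$ is a Parseval frame for the range of $P$, and since $\sum_{i\in A}\xi_i\xi_i^*=PP_AP=(P_AP)^*(P_AP)$ one has
\[
\Big\|\sum_{i\in A}\xi_i\xi_i^*\Big\|=\|P_AP\|^2=\|P_APP_A\|\qquad\text{for every }A\subset I.
\]
Thus producing an $(r,\tfrac{1+\ve}{2})$-paving of $P$ is literally the same as partitioning $I$ into $r$ blocks $A_1,\dots,A_r$ with $\|\sum_{i\in A_j}\xi_i\xi_i^*\|\le\tfrac{1+\ve}{2}$ for all $j$. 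This partitioning statement for frames of uniformly small norm is exactly the infinite-dimensional Weaver conjecture $(KS^\infty_r)$; and $(KS^\infty_r)$ reduces to its finite-dimensional form $(KS_r)$ by a standard compactness argument (pass to finite sub-frames, apply the finite result, and take an ultralimit of the resulting partitions, using that $\|P_APP_A\|$ is a supremum over finitely supported unit vectors).

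The substantive step is the finite version. Here one is given $v_1,\dots,v_m\in\C^d$ with $\sum_i v_iv_i^*=I_d$ and $\|v_i\|^2\le\delta$, and wants a partition $[m]=S_1\sqcup\dots\sqcup S_r$ with $\|\sum_{i\in S_j}v_iv_i^*\|\le\big(\tfrac1{\sqrt r}+\sqrt\delta\,\big)^2$. I would obtain this from $(MSS)$ by the usual randomisation: colour each index by an independent uniform $\tau(i)\in[r]$ and, inside $\C^{rd}=\bigoplus_{k=1}^r\C^d$, set $z_i=\sqrt r\,(e_{\tau(i)}\otimes v_i)$. These are independent random vectors with $\sum_i\EV[z_iz_i^*]=I_{rd}$ and $\EV[\|z_i\|^2]\le r\delta$, and $\sum_i z_iz_i^*=r\bigoplus_{k=1}^r\big(\sum_{i\in S_k}v_iv_i^*\big)$, so the bound $\|\sum_i z_iz_i^*\|\le(1+\sqrt{r\delta})^2$ provided (with positive probability) by $(MSS)$ is, after dividing by $r$, precisely the claimed estimate. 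Applying this with $\delta=\tfrac12$ — the value forced by $\|\xi_i\|^2=\tfrac12$ — yields a partition of $I$ into $r$ blocks with each block norm at most $\tfrac12+\tfrac1r+\sqrt{2/r}$, and this is $\le\tfrac{1+\ve}{2}$ as soon as $r=r(\ve)$ is taken large enough that $\tfrac1r+\sqrt{2/r}\le\tfrac\ve2$.

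The one genuinely hard input is $(MSS)$, and I would prove it through its mixed-characteristic-polynomial avatar $(MCP)$: show that the expected characteristic polynomial of $\sum_i z_iz_i^*$ is real-rooted, that the characteristic polynomials obtained by conditioning on the first $k$ colours form an interlacing family (so that some realisation has largest root at most that of the expectation), and finally bound the largest root of the mixed characteristic polynomial by the multivariate barrier-function argument. Carrying out $(MCP)$ — i.e.\ developing enough of the theory of real stable polynomials and running the barrier estimate — is where essentially all the difficulty lies; everything preceding it is soft reduction.
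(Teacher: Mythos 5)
Your proposal is correct and follows essentially the same route as the paper: identify $P$ with the Gram matrix of the Parseval frame $\{Pe_i\}_{i\in I}$ (so block norms $\|P_APP_A\|$ equal frame-operator norms of the subfamilies), apply $(KS^\infty_r)$ with $\delta=\tfrac12$ and take $r\sim C/\ve^2$, deduce $(KS^\infty_r)$ from $(KS_r)$ by a finite-to-infinite limiting argument, get $(KS_r)$ from $(MSS)$ by the random-coloring construction in $\C^{rd}$, and prove $(MSS)$ via $(MCP)$ with interlacing families and the barrier argument. The only detail left implicit is that a finite truncation of an infinite Parseval frame is merely a Bessel sequence, so before applying $(KS_r)$ one must complete it to a finite Parseval frame by adjoining vectors of norm at most $\delta$ (scaled eigenvectors of $\mathbf I-\sum_i u_iu_i^*$), exactly as the paper does at the start of its proof of $(KS_r)\implies(KS^\infty_r)$.
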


While the implication $(PS) \implies (PB)$ is trivial, we need to show the converse implication. At the same time we shall keep track how the paving parameters $(r,\ve)$ are affected by these implications.

\begin{lemma} $(PS)$ holds for $(r,\ve)$ $\implies$ $(PB)$ holds for $(r^2,2\ve)$.
\end{lemma}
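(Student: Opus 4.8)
The plan is to reduce the bounded case to the self-adjoint case by splitting $T$ into its real and imaginary parts and then taking a common refinement of the two partitions produced by $(PS)$.

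First I would write $T = S_1 + i S_2$ where $S_1 = \tfrac12(T+T^*)$ and $S_2 = \tfrac{1}{2i}(T-T^*)$ are self-adjoint. Since $T$ has zero diagonal, so does $T^*$, hence $S_1$ and $S_2$ both have zero diagonal, and the triangle inequality gives $\|S_1\|, \|S_2\| \le \|T\|$. Now I would apply $(PS)$ to $S_1$ to obtain a partition $\{A_1,\dots,A_r\}$ of $I$ with $\|P_{A_j} S_1 P_{A_j}\| \le \ve \|S_1\| \le \ve\|T\|$ for all $j$, and likewise apply $(PS)$ to $S_2$ to obtain a partition $\{B_1,\dots,B_r\}$ with $\|P_{B_k} S_2 P_{B_k}\| \le \ve\|T\|$ for all $k$.

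Next I would form the common refinement $\{C_{j,k} := A_j \cap B_k : j,k \in [r]\}$, a partition of $I$ into at most $r^2$ blocks. The key observation is that for $C \subset A_j$ one has $P_C = P_{A_j} P_C = P_C P_{A_j}$, so $P_C S_1 P_C = P_C (P_{A_j} S_1 P_{A_j}) P_C$ and therefore $\|P_C S_1 P_C\| \le \|P_{A_j} S_1 P_{A_j}\| \le \ve\|T\|$, since compressing by the norm-one projection $P_C$ cannot increase the norm; similarly $\|P_C S_2 P_C\| \le \ve\|T\|$ because $C \subset B_k$. Combining, for every block $C = C_{j,k}$,
\[
\|P_C T P_C\| \le \|P_C S_1 P_C\| + \|P_C S_2 P_C\| \le 2\ve\|T\|,
\]
which is exactly $(r^2, 2\ve)$-paving of $T$.

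There is no real obstacle here: the argument is essentially bookkeeping. The only points requiring a moment's care are checking that the real and imaginary parts inherit the zero-diagonal hypothesis and the norm bound $\|S_i\| \le \|T\|$, and the elementary fact that a compression of an operator by a sub-projection has norm no larger than the original compression, which makes the refinement step work.
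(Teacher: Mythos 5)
Your proof is correct and follows essentially the same route as the paper: split $T$ into self-adjoint real and imaginary parts (the paper writes $T=S_1+S_2$ with $S_2$ skew-adjoint and paves $iS_2$, which is the same thing up to a factor of $i$), pave each with $(PS)$, and pass to the common refinement using that compression by a sub-projection does not increase the norm. No gaps; the bookkeeping matches the paper's argument.
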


\begin{proof}
Take any $T \in \mathcal B(\ell^2(I))$ with $\mathbb E(T)=\mathbf 0$. We decompose it as sum of self-adjoint and skew-adjoint operators
\[
T= S_1+S_2 \qquad \text{where } S_1=\frac{T+T^*}2, \ S_2=\frac{T-T^*}2.
\]
By the paving property $(PB)$ for $S_1$ and $iS_2=(iS_2)^*$ we can find partitions $\{A_1,\ldots, A_r\}$ and $\{B_1,\ldots,B_r\}$ such that
\[
||P_{A_i \cap B_j} S_1 P_{A_i \cap B_j} || \le ||P_{A_i} S_1 P_{A_i} || \le \ve ||S_1|| \le \ve ||T|| \qquad i,j \in [r].
\]
Since the same estimate holds for $S_2$ we have
\[
||P_{A_i \cap B_j} T P_{A_i \cap B_j} || \le ||P_{A_i \cap B_j} S_1 P_{A_i \cap B_j} || +||P_{A_i \cap B_j} S_2 P_{A_i \cap B_j} ||   \le 2 \ve ||T|| \quad i,j \in [r].
\]
Hence, the partition $\{A_i \cap B_j\}_{i,j \in [r]}$ yields $(r^2,2\ve)$ paving of $T$.
\end{proof}

\begin{lemma} $(PR)$ holds for $(r,\ve)$ $\implies$ $(PS)$ holds for $(r,\ve)$.
\end{lemma}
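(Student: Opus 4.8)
The plan is to realize an arbitrary zero-diagonal self-adjoint operator as a corner of a zero-diagonal reflection living on a doubled index set, and then transfer the paving of the reflection back to the original operator; in fact this preserves the paving parameters exactly.

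First, since the paving property is invariant under scalar multiplication, I would normalize so that $||S||=1$ (the case $S=\mathbf 0$ being trivial). Then $\mathbf I - S^2 \ge 0$, so $C:=\sqrt{\mathbf I - S^2}$ is a well-defined positive operator on $\ell^2(I)$ which commutes with $S$ and satisfies $C^2 = \mathbf I - S^2$. Next I would consider the operator on $\ell^2(I \sqcup I) \cong \ell^2(I)\oplus \ell^2(I)$ given in block form by
\[
R = \begin{pmatrix} S & C \\ C & -S \end{pmatrix}.
\]
A short computation, using $C=C^*$ and $SC=CS$, gives $R=R^*$ and $R^2 = \mathbf I$; thus $R$ is a reflection and $||R||=1=||S||$. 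The point is that the diagonal entries of $R$ relative to the standard basis of $\ell^2(I\sqcup I)$ are exactly the diagonal entries of the blocks $S$ and $-S$ (the off-diagonal blocks $C$ do not meet the diagonal of $R$), and these all vanish by hypothesis. Hence $R$ has zero diagonal, and $(PR)$ for $(r,\ve)$ applies: there is a partition $\{C_1,\ldots,C_r\}$ of $I\sqcup I$ with $||P_{C_j} R P_{C_j}|| \le \ve ||R|| = \ve$ for all $j\in[r]$.

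Finally I would restrict to the first copy of $I$. Let $A_j \subset I$ be the part of $C_j$ in the first copy; then $\{A_1,\ldots,A_r\}$ is a partition of $I$ (into at most $r$ parts, which is harmless). In block form $P_{C_j}RP_{C_j}$ has $P_{A_j}SP_{A_j}$ as its upper-left corner, and the norm of a corner compression never exceeds the norm of the full operator, so
\[
||P_{A_j} S P_{A_j}|| \le ||P_{C_j} R P_{C_j}|| \le \ve = \ve ||S||.
\]
Therefore $\{A_1,\ldots,A_r\}$ is an $(r,\ve)$-paving of $S$, which is $(PS)$ for $(r,\ve)$.

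There is no genuinely hard analytic step here: the whole content is the algebraic construction of $R$ together with two pieces of bookkeeping --- verifying that $R$ really is a zero-diagonal reflection on the doubled index set, and noticing that the normalization $||S||=1$ is precisely what makes $\ve||R||$ equal $\ve||S||$ rather than merely $\ve$. The only mild subtlety is to ensure $C=\sqrt{\mathbf I - S^2}$ is well-defined, which is why the normalization must come first, and to confirm that compressing $R$ by $P_{C_j}$ genuinely displays $P_{A_j}SP_{A_j}$ as a sub-block so that the corner-norm inequality applies.
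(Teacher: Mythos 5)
Your proposal is correct and follows essentially the same route as the paper: dilate $S$ to the zero-diagonal reflection $\begin{pmatrix} S & \sqrt{\mathbf I - S^2} \\ \sqrt{\mathbf I - S^2} & -S\end{pmatrix}$ on the doubled index set, pave it via $(PR)$, and restrict the partition to the first copy of $I$. The extra bookkeeping you supply (normalization $\|S\|=1$, commutation $SC=CS$, and the corner-compression norm inequality) is exactly what the paper's ``direct calculation'' and ``restricting this partition'' implicitly use.
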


\begin{proof}
Take any $S=S^* \in \mathcal B(\ell^2(I))$ with $\mathbb E(S)=\mathbf 0$. Without loss of generality assume that $||S||=1$. Consider an operator $R$ on $\ell^2(I) \oplus \ell^2(I)$ given by
\[
\begin{bmatrix} S & \sqrt{\mathbf I - S^2} \\ \sqrt{\mathbf I - S^2} & -S \end{bmatrix}.
\]
A direct calculation shows that $R^2=\mathbf I$, $R=R^*$, and $\mathbb E(R)=\mathbf 0$. That is, $R$ is a reflection on $\ell^2(I \cup I')$, where $I'$ is a copy of the index set $I$. By $(PR)$ there exists a partition of $I\cup I'$ which yields $(r,\ve)$ paving of $R$. Restricting this partition to $I$ yields $(r,\ve)$ paving of $S$.
\end{proof}

\begin{lemma} $(PP_{\frac12})$ holds for $(r,\frac{1+\ve}2)$ $\implies$ $(PR)$ holds for $(r^2,\ve)$.
\end{lemma}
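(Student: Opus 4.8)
The plan is to pass from the reflection $R$ on $\ell^2(I)$ to its two complementary spectral projections $P_+ := \frac12(\mathbf I + R)$ and $P_- := \frac12(\mathbf I - R)$, which act on the same space $\ell^2(I)$. First I would check that $P_+$ and $P_-$ meet the hypotheses of $(PP_{\frac12})$: since $R = R^*$ we have $P_\pm^* = P_\pm$, and since $R^2 = \mathbf I$ a direct computation gives $P_\pm^2 = \frac14(\mathbf I \pm 2R + R^2) = \frac14(2\mathbf I \pm 2R) = P_\pm$, so $P_\pm$ is an orthogonal projection; and because $R$ has zero diagonal, every diagonal entry of $P_\pm = \frac12(\mathbf I \pm R)$ equals $\frac12$ (in particular $P_\pm \ne \mathbf 0$, so $\|P_\pm\| = 1$).

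Next I would apply $(PP_{\frac12})$ to each of $P_+$ and $P_-$. This produces a partition $\{A_1,\dots,A_r\}$ of $I$ with $\|P_{A_i} P_+ P_{A_i}\| \le \frac{1+\ve}{2}$ for all $i\in[r]$, and a partition $\{B_1,\dots,B_r\}$ of $I$ with $\|P_{B_j} P_- P_{B_j}\| \le \frac{1+\ve}{2}$ for all $j\in[r]$. I would then form the common refinement $\{A_i \cap B_j : i,j \in [r]\}$, a partition of $I$ into at most $r^2$ blocks, and claim it is an $(r^2,\ve)$-paving of $R$.

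The one estimate to carry out is $\|P_C R P_C\| \le \ve$ for each block $C = A_i \cap B_j$. Fix such $C$ and a unit vector $x$ in the range of $P_C$; since $x$ lies in the ranges of both $P_{A_i}$ and $P_{B_j}$, the two paving bounds give $\lan P_+ x, x\ran \le \frac{1+\ve}{2}$ and $\lan P_- x, x\ran \le \frac{1+\ve}{2}$. As $P_+ + P_- = \mathbf I$ and $\|x\| = 1$, the second inequality is equivalent to $\lan P_+ x, x\ran \ge \frac{1-\ve}{2}$, so $\lan R x, x\ran = \lan(2P_+ - \mathbf I)x, x\ran = 2\lan P_+ x, x\ran - 1 \in [-\ve, \ve]$. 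By homogeneity, $|\lan R u, u\ran| \le \ve\|u\|^2$ for every $u$ in the range of $P_C$, hence for any unit $y \in \ell^2(I)$ we get $|\lan P_C R P_C y, y\ran| = |\lan R(P_C y), P_C y\ran| \le \ve\|P_C y\|^2 \le \ve$; since $P_C R P_C$ is self-adjoint, $\|P_C R P_C\| \le \ve = \ve\|R\|$, which is the required paving.

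I do not expect a serious obstacle here. The only point worth flagging --- and the reason the block count is $r^2$ and not $r$ --- is that $(PP_{\frac12})$ applied to $P_+$ alone yields only the upper bound $\lan R x,x\ran \le \ve$ together with the useless lower bound $\lan R x, x\ran \ge -1$; pairing it with the paving of $P_-$ and passing to the common refinement is exactly what supplies the matching lower bound $\lan R x, x\ran \ge -\ve$.
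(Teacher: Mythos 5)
Your proposal is correct and follows essentially the same route as the paper: form the projections $\frac12(\mathbf I \pm R)$, pave each via $(PP_{\frac12})$, and pass to the common refinement of the two partitions, with the two bounds combining (since the projections sum to $\mathbf I$) to give $-\ve \le \lan Rx,x\ran \le \ve$ on each block and hence $\|P_C R P_C\|\le\ve$ by self-adjointness. The only difference is cosmetic: the paper phrases the key estimate as operator inequalities in the self-adjoint partial order, while you argue with quadratic forms on unit vectors, which amounts to the same thing.
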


\begin{proof}
Take any reflection $R\in \mathcal B(\ell^2(I))$ with $\mathbb E(R)=\mathbf 0$. Define $Q=(\mathbf I +R)/2$. Then, $Q$ is a projection $Q=Q^*=Q^2$ with $\mathbb E(Q) = \frac12 \mathbf I$. Suppose that for some $A\subset I$ we have
\[
||P_A Q P_A || \le \beta:=\frac{1+\ve}2.
\]
Since $Q$ is positive this can be phrased in terms of the partial order $\le$ on self-adjoint operators
\[
\mathbf 0 \le P_A Q P_A \le \beta P_A.
\]
Since $R=2Q-\mathbf I$ we obtain
\begin{equation}\label{pa}
-P_A \le P_A R P_A \le (2\beta-1) P_A =\ve P_A.
\end{equation}
We repeat the same for a projection $Q_1=(\mathbf I -R)/2$. Assuming that for some $B\subset I$ we have
\[
||P_B Q_1 P_B || \le \beta:=\frac{1+\ve}2,
\]
yields
\begin{equation}\label{pb}
-P_B \le P_B (-R) P_B \le (2\beta-1) P_B =\ve P_B.
\end{equation}
Taking $C=A\cap B$ and combining \eqref{pa} and \eqref{pb} yields
\begin{equation}\label{pc}
-\ve P_C  = - \ve P_C P_B P_C \le  P_C R P_C \le \ve P_C P_A P_C= \ve P_C.
\end{equation}
Hence, $||P_C R P_C|| \le \ve$. By the paving property $(PP_{\frac12})$, we can find partitions $\{A_1,\ldots, A_r\}$ and $\{B_1,\ldots,B_r\}$ which produce $(r,\frac{1+\ve}2)$-paving of $Q$ and $Q_1$, resp. By \eqref{pc} their common refinement partition $\{C_{i,j}=A_i \cap B_j\}_{i,j \in [r]}$ yields $(r^2,\ve)$-paving of $R$. 
\end{proof}

\subsection{Weaver's conjecture}
Next, we will show that paving conjectures follow from Weaver's $KS_r$ conjecture, which was verified by Marcus, Spielman, and Srivastava \cite{MSS}. We state it in a general infinite dimensional form $(KS^\infty_r)$ and later deduce it from its finite dimensional counterpart $(KS_r)$. We start with the standard definition in frame theory.

\begin{definition}\label{def1}
A family of vectors $\{u_i \}_{i\in I}$ in a Hilbert space
$\mathcal H$ is called a {\it frame} for $\mathcal H$ if
there are constants $0<A\le B < \infty$ (called 
{\it lower and upper frame bounds}, respectively)
so that 
\begin{equation}\label{E5}
A\|u\|^2 \le \sum_{i\in I}|\langle u,u_i \rangle |^2
\le
B\| u\|^2 \qquad\text{for all }u\in \mathcal H.
\end{equation}
If we only have the right hand inequality in \eqref{E5},
 we call $\{u_i\}_{i\in I}$
a {\it Bessel sequence} with Bessel bound $B$.
  If $A=B$, $\{u_i \}_{i\in I}$ is called a
{\it tight frame} and if $A=B=1$, it is called a
{\it Parseval frame}.
\end{definition}

\begin{theorem*}[$KS^\infty_r$]\label{MSS}
Let $I$ be at most countable index set and let $\mathcal H$ be a separable Hilbert space. Let $\{u_i\}_{i \in I} \subset \mathcal H$ be a Bessel sequence with bound $1$,
\begin{equation}\label{mss1}
\sum_{i\in I} |\langle u, u_i \rangle |^2 \le 1 \quad\text{for all }||u||=1 \qquad\text{and}\qquad\|u_i\|^2 \le \delta
\quad\text{for all }i.
\end{equation}
Then for any positive integer $r$, there exists a partition $\{I_1,\ldots, I_r\}$ of $I$ such that each $\{u_i\}_{i\in  I_k}$, $k=1,\ldots,r$, is a Bessel sequence with the following bound 
\begin{equation}\label{mss2}
\sum_{i\in I_k} |\langle u, u_i \rangle |^2 \le \left(\frac{1}{\sqrt{r}} + \sqrt{\delta}\right)^2  \qquad\text{for all }||u||=1.
\end{equation}
\end{theorem*}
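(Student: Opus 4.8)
The plan is to deduce $(KS^\infty_r)$ from its finite-dimensional counterpart $(KS_r)$ --- vectors $u_1,\dots,u_m$ in a finite-dimensional Hilbert space with $\sum_i u_iu_i^*=\mathbf I$ and $\|u_i\|^2\le\delta$ can be partitioned into $r$ blocks each of whose partial sum $\sum_{i\in S_k}u_iu_i^*$ has operator norm at most $(1/\sqrt r+\sqrt\delta)^2$ --- by two reductions: first to a \emph{finite} index set $I$, then a compactness argument for countable $I$. Throughout we may assume $\delta<1$, since otherwise $(1/\sqrt r+\sqrt\delta)^2>1$ and the trivial one-block partition already works by \eqref{mss1}.

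\textbf{Finite $I$.} Suppose $I$ is finite. The $\{u_i\}_{i\in I}$ span a finite-dimensional subspace $V\subset\mathcal H$; writing $P$ for the orthogonal projection onto $V$ we have $\langle u,u_i\rangle=\langle Pu,u_i\rangle$, so both \eqref{mss1} and \eqref{mss2} only involve $u\in V$ and we may replace $\mathcal H$ by $V$. The frame operator $S=\sum_{i\in I}u_iu_i^*$ satisfies $\mathbf 0\le S\le\mathbf I_V$ by \eqref{mss1}, so $\mathbf I_V-S\ge\mathbf 0$; diagonalizing it as $\mathbf I_V-S=\sum_\ell\lambda_\ell\,e_\ell e_\ell^*$ with $\{e_\ell\}$ orthonormal and $0\le\lambda_\ell\le1$, and splitting the $\ell$-th term into $N_\ell:=\lceil\lambda_\ell/\delta\rceil$ equal copies of the vector $\sqrt{\lambda_\ell/N_\ell}\,e_\ell$, produces finitely many vectors $v_1,\dots,v_M$ with $\|v_j\|^2\le\delta$ and $\sum_j v_jv_j^*=\mathbf I_V-S$. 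Then $\{u_i\}_{i\in I}\cup\{v_j\}_{j=1}^M$ is an isotropic system in $V$ with all squared norms $\le\delta$, so $(KS_r)$ gives a partition; discarding the indices of the $v_j$ and using that partial sums of rank-one positive operators are monotone (hence $\sum_{i\in S_k\cap I}u_iu_i^*\le\sum_{\text{all }i\in S_k}(\cdot)$) yields the desired partition of $I$ satisfying \eqref{mss2}.

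\textbf{Countable $I$.} Relabel $I=\N$. For each $n$, the truncation $\{u_i\}_{i\le n}$ is still Bessel with bound $1$, so the finite case produces a partition $\{S^{(n)}_1,\dots,S^{(n)}_r\}$ of $\{1,\dots,n\}$ obeying \eqref{mss2}; encode it as $f_n:\N\to[r]$ (say $f_n\equiv1$ beyond $n$). Since $[r]^{\N}$ is compact metrizable in the product topology, a subsequence $f_{n_j}$ converges pointwise to some $f:\N\to[r]$; put $I_k=f^{-1}(k)$. Fix $k$, a unit vector $u$, and a finite $J\subset I_k$ with $m=\max J$; choosing $j$ so large that $n_j\ge m$ and $f_{n_j}$ agrees with $f$ on $\{1,\dots,m\}$, we get $J\subset S^{(n_j)}_k$, hence $\sum_{i\in J}|\langle u,u_i\rangle|^2\le\sum_{i\in S^{(n_j)}_k}|\langle u,u_i\rangle|^2\le(1/\sqrt r+\sqrt\delta)^2$. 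Taking the supremum over all finite $J\subset I_k$ gives \eqref{mss2} for $\{u_i\}_{i\in I_k}$.

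The only genuinely delicate point is the first reduction: $(KS_r)$ is stated for systems in isotropic position ($\sum u_iu_i^*=\mathbf I$), so one must manufacture auxiliary vectors $v_j$ completing $S$ to the identity while keeping $\|v_j\|^2\le\delta$ --- everything else is bookkeeping and a routine diagonal/compactness argument. (If one starts instead from a version of $(KS_r)$ that already permits $\sum u_iu_i^*\le\mathbf I$, the completion step is unnecessary and only the finite-dimensional reduction and the compactness step remain.)
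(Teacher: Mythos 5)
Your proposal is correct and follows essentially the same route as the paper: complete the Bessel system to a Parseval frame by adjoining (split, scaled) eigenvectors of $\mathbf I - S$ so that $(KS_r)$ applies, restrict the resulting partition, and then pass from finite truncations to countable $I$ by a compactness/diagonal argument, which is exactly the paper's pinball principle in the guise of compactness of $[r]^{\N}$. The extra care you take (explicitly splitting eigenvalues into $\lceil\lambda_\ell/\delta\rceil$ pieces and verifying the limit Bessel bound via suprema over finite subsets) just spells out steps the paper leaves implicit.
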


Next we show how $(KS_r^\infty)$ implies projection paving.

\begin{lemma} \label{kspp}
$(KS_r^\infty) \implies (PP_{\frac12})$.
\end{lemma}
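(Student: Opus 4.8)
The plan is to recast $(PP_{\frac12})$ as a statement about Bessel bounds of sub-families of a Parseval frame, and then invoke $(KS_r^\infty)$ with $\delta=\tfrac12$. Let $P\in\mathcal B(\ell^2(I))$ be a projection all of whose diagonal entries equal $\tfrac12$. Put $\mathcal H=\operatorname{ran}(P)$, a closed (hence separable) subspace of $\ell^2(I)$, and set $u_i=Pe_i\in\mathcal H$ for $i\in I$. Since $P=P^*=P^2$ we get $\langle u_i,u_j\rangle=\langle Pe_i,e_j\rangle=P_{ji}$, so $\|u_i\|^2=P_{ii}=\tfrac12$ for every $i$, and for $u\in\mathcal H$
\[
\sum_{i\in I}|\langle u,u_i\rangle|^2=\sum_{i\in I}|\langle Pu,e_i\rangle|^2=\|Pu\|^2=\|u\|^2 .
\]
Thus $\{u_i\}_{i\in I}$ is a Parseval frame for $\mathcal H$ (Definition \ref{def1}); in particular it is a Bessel sequence with bound $1$ and $\|u_i\|^2\le\delta$ with $\delta=\tfrac12$, so \eqref{mss1} holds.

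Next I would relate a paving of $P$ to the Bessel bounds of the corresponding sub-families: for $A\subset I$ the claim is
\[
\|P_APP_A\|=\sup\Bigl\{\,\sum_{i\in A}|\langle u,u_i\rangle|^2:\ u\in\mathcal H,\ \|u\|=1\,\Bigr\}.
\]
To see this, note $P_APP_A\ge\mathbf 0$ and $P_APP_A=P_A(P_APP_A)P_A$, so $\|P_APP_A\|=\sup\{\langle P_APP_Ax,x\rangle:\|x\|=1,\ P_Ax=x\}$. For such $x=\sum_{i\in A}x_ie_i$ we have $\langle P_APP_Ax,x\rangle=\langle Px,x\rangle=\|Px\|^2=\bigl\|\sum_{i\in A}x_iu_i\bigr\|^2$, so $\|P_APP_A\|$ equals the squared norm of the synthesis operator $T\colon\ell^2(A)\to\mathcal H$, $T(x)=\sum_{i\in A}x_iu_i$. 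Finally $\|T\|^2=\|TT^*\|$, and $TT^*$ is the positive frame operator $v\mapsto\sum_{i\in A}\langle v,u_i\rangle u_i$ of $\{u_i\}_{i\in A}$, whose norm is exactly the supremum above. (Equivalently: the restriction of $P_APP_A$ to $\ell^2(A)$ is the Gram matrix $(\langle u_j,u_i\rangle)_{i,j\in A}=T^*T$, which shares its nonzero spectrum with $TT^*$.)

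To finish, given $\ve>0$ choose $r=r(\ve)$ so large that $\sqrt{2/r}+1/r\le\ve/2$ (possible since the left-hand side tends to $0$). Applying $(KS_r^\infty)$ to $\{u_i\}_{i\in I}$ with $\delta=\tfrac12$ produces a partition $\{I_1,\dots,I_r\}$ of $I$ such that for all $\|u\|=1$
\[
\sum_{i\in I_k}|\langle u,u_i\rangle|^2\le\Bigl(\tfrac1{\sqrt r}+\tfrac1{\sqrt2}\Bigr)^2=\tfrac12+\sqrt{\tfrac2r}+\tfrac1r\le\tfrac{1+\ve}2 .
\]
By the previous step this says $\|P_{I_k}PP_{I_k}\|\le\tfrac{1+\ve}2$ for $k=1,\dots,r$, i.e.\ $\{I_1,\dots,I_r\}$ is an $(r,\tfrac{1+\ve}2)$-paving of $P$, which is precisely $(PP_{\frac12})$.

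The one step that is not routine bookkeeping is the norm identity of the second paragraph — identifying $\|P_APP_A\|$ with the optimal Bessel bound of the truncated family via the synthesis/frame-operator (equivalently, Gram-matrix) duality $\|T^*T\|=\|TT^*\|$. Once this is in hand, $(KS_r^\infty)$ does all the real work and the choice of $r(\ve)$ is elementary.
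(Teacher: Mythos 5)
Your proof is correct and follows essentially the same route as the paper: set $u_i=Pe_i$, observe that $\{u_i\}$ is a Parseval frame for $\operatorname{ran}(P)$ with $\|u_i\|^2=\tfrac12$, identify $\|P_APP_A\|$ with the optimal Bessel bound of $\{u_i\}_{i\in A}$ via $\|T^*T\|=\|TT^*\|$ (Gram matrix versus frame operator), and then apply $(KS_r^\infty)$ with $\delta=\tfrac12$ and an elementary choice of $r(\ve)$. The only differences are cosmetic (you spell out the norm identity a bit more, and your $r(\ve)$ is chosen slightly differently from the paper's $r=36/\ve^2$).
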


\begin{proof}
Let $Q$ be an arbitrary projection on $\ell^2(I)$ with $\mathbb E(Q)= \frac12 \mathbf I$. Define vectors $u_i=Qe_i$, $i\in I$, where $\{e_i\}_{i\in I}$ is a standard o.n. basis of $\ell^2(I)$. Then, $Q$ is represented by the Gram matrix of $\{u_i\}_{i \in I}$
\[
Q = ( \langle Qe_i, Qe_j \rangle )_{i,j \in I}= ( \langle u_i, u_j \rangle )_{i,j \in I} \qquad\text{and}\qquad ||u_i||^2 = \frac12 = \delta.
\]
The Gram matrix $Q=TT^*$ is a composition of the analysis operator 
\[
T: \mathcal H \to \ell^2(I)\qquad\text{where } Tu= (\langle u, u_i\rangle)_{i\in I} \quad\text{ for } u \in\mathcal H,
\]
with the synthesis operator
\[
T^*: \ell^2(I) \to \mathcal H \qquad\text{where } Ta= \sum_{i\in I} a_i u_i \quad\text{ for } a=(a_i)_{i\in I} \in\ell^2(I).
\]
The frame operator  is a composition of these operators, but in a reverse order
\[
S:  \mathcal H \to \mathcal H \qquad S=T^*T= \sum_{i\in I} u_i \otimes u_i ,
\]
where $u_i \otimes u_i: \mathcal H \to \mathcal H$ is a rank one positive operator given by  
\[
(u_i \otimes u_i)(u) = \langle u, u_i \rangle u_i \quad\text{ for } u \in\mathcal H.
\]
By $(KS_r^\infty)$, for any $r\in \N$, there exists a partition $\{I_1,\ldots, I_r\}$ such that
\[
||P_{I_k} Q P_{I_k}|| = ||( \langle u_i, u_j \rangle)_{i,j \in I_k} || = \bigg\| \sum_{i \in I_k} u_i \otimes u_i \bigg\| \le \bigg(\frac1{\sqrt{r}}+\frac1{\sqrt{2}}\bigg)^2 < \frac12 + \frac3{\sqrt{r}}.
\]
The second equality is the consequence of the fact the norms of Gram and frame operator are the same $||T T^*||=||T^* T||$. Thus, $Q$ can be $(r, \frac{1+\ve}2)$-paved for $r=36/\ve^2$.
\end{proof}

Marcus, Spielman, and Srivastava \cite{MSS} have shown the following version of Weaver's conjecture. The key feature of $(KS_r)$ is independence of Bessel bound on a number  of vectors $m$ and a dimension $d$.

\begin{theorem*}[$KS_r$]\label{MSS2}
Let $\{u_i\}_{i \in [m]} \subset \C^d$ be a Parseval frame
\begin{equation}\label{mss3}
\sum_{i=1}^m |\langle u, u_i \rangle |^2 = 1 \quad\text{for all }||u||=1 \qquad\text{and}\qquad\|u_i\|^2 \le \delta
\quad\text{for all }i.
\end{equation}
Then for any positive integer $r$, there exists a partition $\{I_1,\ldots, I_r\}$ of $[m]$ such that each $\{u_i\}_{i\in  I_k}$, $k=1,\ldots,r$, is a Bessel sequence with bound $ \left(\frac{1}{\sqrt{r}} + \sqrt{\delta}\right)^2$, i.e., \eqref{mss2} holds.
\end{theorem*}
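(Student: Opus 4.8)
The plan is to derive $(KS_r)$ from the Marcus--Spielman--Srivastava random matrix theorem $(MSS)$, which I will use in the following form: if $v_1,\dots,v_m$ are independent random vectors in $\C^N$, each taking only finitely many values, with $\sum_{i=1}^m\EV[v_iv_i^*]=\bI_N$ and $\EV[\|v_i\|^2]\le\ve$ for all $i$, then
\[
\PP\bigl(\|S\|\le(1+\sqrt{\ve})^2\bigr)>0 \qquad\text{where}\qquad S=\sum_{i=1}^m v_iv_i^* .
\]
The proof of $(MSS)$, through the mixed characteristic polynomial statement $(MCP)$, is the technical core of the paper and will be carried out separately; granting it, the deduction of $(KS_r)$ is a short dimension-lifting argument requiring no further estimates.

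First I would enlarge the ambient space: set $N=dr$, identify $\C^N$ with $\bigoplus_{k=1}^r\C^d$, and write $\iota_k\colon\C^d\to\C^N$ for the isometric inclusion onto the $k$-th block. Choose $k(1),\dots,k(m)\in[r]$ independently and uniformly at random and put $v_i=\sqrt{r}\,\iota_{k(i)}(u_i)$. Then $v_iv_i^*$ is the matrix with $r\,u_iu_i^*$ in the $(k(i),k(i))$ block and zeros elsewhere, so $\EV[v_iv_i^*]=\bigoplus_{k=1}^r u_iu_i^*$, and since $\{u_i\}_{i\in[m]}$ is Parseval we have $\sum_{i=1}^m u_iu_i^*=\bI_d$ by \eqref{mss3}, hence
\[
\sum_{i=1}^m\EV[v_iv_i^*]=\bigoplus_{k=1}^r\ \biggl(\sum_{i=1}^m u_iu_i^*\biggr)=\bigoplus_{k=1}^r\bI_d=\bI_N .
\]
Moreover $\EV[\|v_i\|^2]=r\|u_i\|^2\le r\delta$, so the hypotheses of $(MSS)$ hold with $\ve=r\delta$.

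Applying $(MSS)$ then yields an outcome of this random assignment, i.e.\ a partition $\{I_1,\dots,I_r\}$ of $[m]$ with $I_k=\{i\in[m]:k(i)=k\}$, for which $\|S\|\le(1+\sqrt{r\delta})^2$. For that outcome $S$ is block diagonal with $k$-th diagonal block equal to $r\sum_{i\in I_k}u_iu_i^*$, so comparing norms gives, for every $k\in[r]$,
\[
\bigg\|\sum_{i\in I_k}u_i\otimes u_i\bigg\|\le\frac1r\bigl(1+\sqrt{r\delta}\bigr)^2=\biggl(\frac{1}{\sqrt{r}}+\sqrt{\delta}\biggr)^2 .
\]
Since $\sum_{i\in I_k}u_i\otimes u_i$ is the (positive) frame operator of $\{u_i\}_{i\in I_k}$, its operator norm equals the optimal upper Bessel bound of that subfamily, which is precisely \eqref{mss2}; this completes the deduction.

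The genuine obstacle lies entirely inside $(MSS)$ --- bounding the top eigenvalue of a sum of independent rank-one random matrices --- and is handled by the interlacing-families method together with root estimates for real stable polynomials; none of that enters the reduction above. The one point to watch is the factor $\sqrt{r}$, which is forced twice: once so that $\sum_i\EV[v_iv_i^*]$ normalizes to $\bI_N$, and again (in dividing the block norms by $r$) so that the $(MSS)$ bound $(1+\sqrt{r\delta})^2$ turns into the asserted $(1/\sqrt{r}+\sqrt{\delta})^2$.
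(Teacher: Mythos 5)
Your reduction is correct and coincides with the paper's own proof: the same lifting to $\C^{dr}$ via independent uniform placement of $\sqrt{r}\,u_i$ into one of $r$ blocks, the same verification that $(MSS)$ applies with $\epsilon=r\delta$, and the same extraction of the partition and block-norm bound $\frac1r(1+\sqrt{r\delta})^2=(\frac{1}{\sqrt r}+\sqrt{\delta})^2$. Nothing further is needed.
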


To deduce Theorem $(KS^\infty_r)$ from $(KS_r)$ we will use the following fact, which is sometimes referred to as a pinball principle. Its proof is essentially a combination of diagonal argument with pigeonhole principle.

\begin{lemma}\label{pinball} 
Fix a natural number $r$ and assume for every natural number $n$, we have a partition $\{I_i^n\}_{i=1}^r$ of $[n]$. Then there are natural numbers $\{n_1 < n_2 < \cdots \}$ so that if $j \in I_i^{n_j}$ for some $i \in [r]$, then $j \in I_i^{n_k}$ for all $k \ge j$. For any $i \in [r]$ define $I_i = \{j: j\in I_i^{n_j}\}$. Then,
\begin{enumerate}[(i)]
\item $\{I_i\}_{i=1}^r$ is a partition of $\N$.
\item If $I_i = \{j_1 < j_2 < \cdots\}$, then for every natural number $k$ we have 
\[\{j_1,j_2,\ldots,j_k\} \subset I_i^{n_{j_k}}.\]
\end{enumerate}
\end{lemma}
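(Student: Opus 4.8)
The plan is to construct the subsequence $\{n_k\}$ by a diagonal extraction together with the pigeonhole principle, and then verify properties (i) and (ii) essentially by unwinding the construction. I would first focus on building the $n_k$'s so that the membership of each fixed $j$ stabilizes along the selected subsequence. The key observation is that for a fixed index $j$, as $n$ ranges over all naturals $\ge j$, the value $i(n) \in [r]$ with $j \in I_i^n$ takes only finitely many values, so by pigeonhole some value is attained infinitely often; passing to a subsequence makes $j$'s membership constant. Iterating this over $j=1,2,3,\dots$ gives a nested family of infinite subsequences, and the diagonal sequence $\{n_k\}$ inherits eventual stabilization of membership for every $j$.

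More concretely, I would proceed as follows. Start with $S_0 = \N$. Given an infinite set $S_{j-1} \subset \N$, consider the function $n \mapsto i$ where $j \in I_i^n$, defined for $n \in S_{j-1}$ with $n \ge j$; since its range lies in the finite set $[r]$, there is an infinite subset $S_j \subseteq S_{j-1} \cap \{n : n \ge j\}$ on which this function is constant, say equal to $i_j$. Now choose $n_1 < n_2 < \cdots$ diagonally: let $n_k$ be the $k$-th element of $S_k$ (ensuring $n_k > n_{k-1}$ and $n_k \ge k$, which is automatic since $S_k \subseteq \{n : n \ge k\}$). Then for any $j$, whenever $k \ge j$ we have $n_k \in S_k \subseteq S_j$, so $j \in I_{i_j}^{n_k}$; in particular the membership is constant for all $k \ge j$. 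Since also $n_j \in S_j$, we get $j \in I_{i_j}^{n_j}$, so $I_i = \{j : j \in I_i^{n_j}\} = \{j : i_j = i\}$, and the asserted implication ``$j \in I_i^{n_j} \implies j \in I_i^{n_k}$ for all $k \ge j$'' holds.

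With this in hand, (i) is immediate: the sets $\{j : i_j = i\}$, $i \in [r]$, partition $\N$ because each $j$ has exactly one associated value $i_j \in [r]$. For (ii), suppose $I_i = \{j_1 < j_2 < \cdots\}$ and fix $k$. Each $j_\ell$ with $\ell \le k$ satisfies $i_{j_\ell} = i$, hence $j_\ell \in I_i^{n_m}$ for all $m \ge j_\ell$; in particular, since $j_k \ge j_\ell$ we have $n_{j_k} \ge n_{j_\ell}$, actually I should say $j_k \ge j_\ell$ implies $j_\ell \in I_i^{n_{j_k}}$ because $j_k \ge j_\ell$ puts $n_{j_k}$ in the stabilized range for $j_\ell$. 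Wait — the stabilization for $j_\ell$ holds for indices $m \ge j_\ell$, and here the \emph{index} is $j_k \ge j_\ell$, so indeed $j_\ell \in I_i^{n_{j_k}}$. This gives $\{j_1, \dots, j_k\} \subset I_i^{n_{j_k}}$.

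I do not expect a serious obstacle here; the only point requiring a little care is making the diagonal extraction precise (choosing $S_j$ infinite at each stage, ensuring $n_k$ strictly increasing and $n_k \ge k$) and then being careful that ``stabilization of $j$'s membership for all subsequence-indices $m \ge j$'' is exactly what licenses both the defining implication and property (ii). The mild subtlety is the bookkeeping distinction between the running index $k$ of the subsequence and the element $j$ of $\N$ whose membership we track; keeping straight that we need $k \ge j$ (not $n_k \ge j$, though that also holds) is the one place to be attentive.
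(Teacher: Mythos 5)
Your proof is correct and follows essentially the same route as the paper, which (inside the proof of Lemma \ref{ksinf}) uses exactly this repeated pigeonhole extraction of nested infinite sets $N_1 \supset N_2 \supset \cdots$ with stabilized slots $i_j$, combined with a diagonal choice of the $n_k$. Your verification of (i) and (ii), including the point that stabilization is indexed by the subsequence position $k \ge j$ rather than by $n_k$, matches the intended argument.
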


Instead of giving a separate proof of Lemma \ref{pinball}, we include its justification in the proof of Lemma \ref{ksinf}.

\begin{lemma} \label{ksinf}
$(KS_r) \implies (KS^\infty_r)$.
\end{lemma}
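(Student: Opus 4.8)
The plan is to deduce the infinite-dimensional statement $(KS^\infty_r)$ from the finite-dimensional one $(KS_r)$ by a compactness/exhaustion argument, using the pinball principle (Lemma~\ref{pinball}) to patch together the partitions obtained at each finite stage. First I would reduce to the case where $I = \N$ (relabelling, and noting the case of finite $I$ is immediate from $(KS_r)$ after passing to the span, or can simply be absorbed). The first technical point is that $(KS_r)$ is stated for Parseval frames in $\C^d$, whereas the hypothesis of $(KS^\infty_r)$ only gives a Bessel sequence with bound $1$ in a separable Hilbert space. I would handle this in two moves: (a) replace the abstract $\mathcal H$ by $\ell^2$ via an isometry, and for each $n$ restrict attention to the first $n$ vectors $u_1,\dots,u_n$, which span a finite-dimensional space $V_n$ of some dimension $d_n$; (b) upgrade the Bessel sequence $\{u_i\}_{i=1}^n$ in $V_n$ to a Parseval frame by adding a single ``slack'' vector (or a few vectors) — concretely, since $\sum_{i=1}^n u_i\otimes u_i \le \bI_{V_n}$, the positive operator $\bI_{V_n} - \sum_{i=1}^n u_i\otimes u_i$ has a square root whose columns, appended to $\{u_i\}$, form a Parseval frame for $V_n$. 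One must check these extra vectors still have squared norm $\le \delta$; if not, one splits each into $\lceil 1/\delta\rceil$ equal pieces, which does not affect the argument. Then $(KS_r)$ applies and yields a partition $\{I^n_1,\dots,I^n_r\}$ of the index set (the extra indices can be discarded, or assigned arbitrarily), such that $\|\sum_{i\in I^n_k} u_i\otimes u_i\| \le (1/\sqrt r + \sqrt\delta)^2$ for each $k$.

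Next I would invoke the pinball principle: from the sequence of partitions $\{I^n_i\}_{i=1}^r$ of $[n]$, extract a subsequence $n_1 < n_2 < \cdots$ and define $I_i = \{j : j \in I^{n_j}_i\}$. By part (i) of Lemma~\ref{pinball} this is a partition of $\N$, and by part (ii), writing $I_i = \{j_1 < j_2 < \cdots\}$, every finite initial segment $\{j_1,\dots,j_k\}$ is contained in $I^{n_{j_k}}_i$. This is the mechanism that transfers the finite estimates to the limit. The proof of the pinball principle itself is the standard diagonal-plus-pigeonhole argument: there are only $r$ possible values for ``which block does $j$ lie in,'' so for each fixed $j$ infinitely many of the partitions agree on $j$'s block; iterating and diagonalizing produces the desired subsequence. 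As the excerpt remarks, this is to be folded into the present proof rather than proved separately.

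Finally I would verify the Bessel bound \eqref{mss2} for each $\{u_i\}_{i\in I_k}$. Fix $k$ and $u \in \mathcal H$ with $\|u\|=1$; I want $\sum_{i\in I_k}|\langle u,u_i\rangle|^2 \le (1/\sqrt r + \sqrt\delta)^2$. The sum is the supremum over finite subsets $F \subset I_k$ of $\sum_{i\in F}|\langle u,u_i\rangle|^2 = \langle (\sum_{i\in F} u_i\otimes u_i) u, u\rangle$. Given such an $F$, choose $k$ large enough that $F \subset \{j_1,\dots,j_k\}$ where $I_k = \{j_1 < j_2 < \cdots\}$ (abusing notation); then $F \subset I^{n_{j_k}}_i$ by part (ii) of the pinball lemma, so $\sum_{i\in F} u_i\otimes u_i \le \sum_{i\in I^{n_{j_k}}_i} u_i\otimes u_i$ as positive operators, whence $\sum_{i\in F}|\langle u,u_i\rangle|^2 \le \|\sum_{i\in I^{n_{j_k}}_i} u_i\otimes u_i\| \le (1/\sqrt r+\sqrt\delta)^2$ from the finite-stage conclusion. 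Taking the supremum over $F$ gives the claim. I expect the main obstacle to be the bookkeeping in the first step — ensuring that the passage from a Bessel sequence to a Parseval frame in each $V_n$ can be done while keeping all vector norms $\le \delta$ and keeping the index sets nested compatibly with the pinball principle (one must be careful that the ``completion'' vectors added at stage $n$ are not carried along to stage $n+1$, or else track them consistently). Everything else is routine once that setup is in place.
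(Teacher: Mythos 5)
Your proposal is correct and follows essentially the same route as the paper: complete the finite Bessel sequence to a Parseval frame by adjoining vectors coming from $\mathbf I - \sum_i u_i\otimes u_i$ (scaled/split so each added vector has norm squared $\le\delta$), apply $(KS_r)$ and restrict the partition to the original indices, then patch the partitions of the initial segments $[n]$ together via the pinball principle and pass to the limit over finite subsets to get the Bessel bound \eqref{mss2}. The only differences are cosmetic (working inside the span $V_n$ and taking suprema over finite $F\subset I_k$ rather than over $I_k\cap[m]$), so there is nothing to correct.
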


\begin{proof}
First, observe that the Parseval frame assumption \eqref{mss3} can be weakened by the Bessel condition. Indeed, suppose that $\{u_i\}_{i\in [m]}$ is merely a Bessel sequence with bound $1$ and $||u_i||^2 \le \delta$. Define $d\times d$ matrix $T$ as
\[
T = \mathbf I - \sum_{i=1}^m u_i \otimes u_i.
\]
Since $T$ is positive semidefinite, we can find vectors $\{u_i\}_{i=m+1}^{m'}$, $m'>m$, such that
\[
T = \sum_{i=m+1}^{m'} u_i \otimes u_i \qquad\text{and}\qquad ||u_i||^2 \le \delta \text{ for }i\ge m+1.
\]
Indeed, it suffices to choose vectors $u_i$ to be appropriately scaled eigenvectors of $T$. Consequently, $\{u_i\}_{i\in [m']}$ becomes a Parseval frame for $\C^d$ and by $(KS_r)$ we can find a partition $\{I_1,\ldots, I_r\}$ of $[m']$ such that corresponding subsets $\{u_i\}_{i\in I_k}$ have required Bessel bounds. Restricting these partition to $[m]$ yields the same conclusion for $\{u_i\}_{i\in I_k\cap [m]}$, $k=1,\ldots,r$.

Now suppose $\{u_i\}_{i\in I}$ is an infinite Bessel sequence in a Hilbert space $\mathcal H$ as in \eqref{mss1}. Since $I$ is countable, we may assume $I=\N$. For any $n\in \N$ we can apply $(KS_r)$ to the initial sequence $\{u_i\}_{i\in [n]}$. Hence, for each $n\in \N$ we have a partition $\{I_1^n, \ldots, I_r^n\}$ of $[n]$, which yields required Bessel bounds. To show the existence of a global partition of $\{I_1,\ldots,I_r\}$ of $\N$ satisfying \eqref{mss2}, it suffices to apply Lemma \ref{pinball}. This boils down to repeated applications of pigeonhole principle. The first vector $u_1$ must land infinitely many times to one of the slots $I_{j_1}^n$ for some $j_1=1,\ldots,r$. Let $N_1 \subset \N$ be the collection of such $n$. Then, we repeat the same argument to the second vector $u_2$ for partitions of $[n]$, where $n\in N_1$. Again, we can find a slot $I_{j_2}^n$, where the second vector $u_2$ lands for infinitely many $n\in N_2 \subset N_1$. Repeating this process yields a nested sequence of infinite subsets $N_1 \supset N_2 \supset \ldots$ and indices $j_1,j_2,\ldots$ in $[r]$ such that the initial vectors $u_1,\ldots, u_m$, $m\in \N$, all land to the same respective slots $I^n_{j_1},\ldots,I^n_{j_m}$ for all $n\in N_m$. This yields a global partition of $\N$ by $I_k =\{ i \in \N: j_i=k\}$, $k\in [r]$. Thus, \eqref{mss2} holds when $I_k$ replaced by $I_k \cap [m]$. Letting $m\to \infty$ shows the required Bessel bound \eqref{mss2}.
\end{proof}

An interesting special case of Weaver's conjecture $(KS_r)$ happens when $r=2$.

\begin{theorem*}[$KS_2^\infty$]
Let $I$ be at most countable index set and let $\mathcal H$ be a separable Hilbert space. Let $\{u_i\}_{i \in I} \subset \mathcal H$ be a Parseval frame and $\|u_i\|^2 \le \delta$ for all i. Then, there exists a partition $\{I_1,I_2\}$ of $I$ such that each $\{u_i\}_{i\in  I_k}$, $k=1,2$ is a frame with bounds
\
\begin{equation}\label{mss4}
\frac12-O(\sqrt{\delta}) \le \sum_{i\in I_k} |\langle u, u_i \rangle |^2 \le \frac12 + O(\sqrt{\delta})  \qquad\text{for all }||u||=1.
\end{equation}
\end{theorem*}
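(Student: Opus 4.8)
The plan is to deduce $(KS_2^\infty)$ directly from $(KS_r^\infty)$ by combining the upper bound coming from the case $r=2$ with a complementary lower bound coming from a larger value of $r$ applied to the other piece. First I would apply $(KS_r^\infty)$ with $r=2$ to the Parseval frame $\{u_i\}_{i\in I}$ (which is in particular Bessel with bound $1$ and satisfies $\|u_i\|^2\le\delta$) to obtain a partition $\{I_1,I_2\}$ with
\[
\sum_{i\in I_k} |\langle u, u_i\rangle|^2 \le \left(\frac{1}{\sqrt2}+\sqrt\delta\right)^2 = \frac12 + O(\sqrt\delta) \qquad k=1,2,
\]
which is exactly the upper estimate in \eqref{mss4}. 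The point is that this single partition automatically gives the matching lower bound: since $\{u_i\}_{i\in I}$ is a Parseval frame, for every unit vector $u$ we have
\[
\sum_{i\in I_1} |\langle u, u_i\rangle|^2 = 1 - \sum_{i\in I_2} |\langle u, u_i\rangle|^2 \ge 1 - \left(\frac12 + O(\sqrt\delta)\right) = \frac12 - O(\sqrt\delta),
\]
and symmetrically for $I_2$. Thus both $\{u_i\}_{i\in I_1}$ and $\{u_i\}_{i\in I_2}$ satisfy \eqref{mss4}, and in particular each is a genuine frame once $\delta$ is small enough that the lower bound is positive; for $\delta$ not small the statement is vacuous since the constant in $O(\sqrt\delta)$ can be taken large enough that $\frac12 - O(\sqrt\delta)\le 0$.

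There is essentially no obstacle here: the only thing to check is that the Parseval identity legitimately splits as a sum over the two parts of the partition, which is immediate because $\{I_1,I_2\}$ partitions $I$ and all series involved converge (Bessel). One should also remark that the lower frame bound being possibly negative is harmless — the inequality \eqref{mss4} as stated is what is asserted, and it holds with the explicit constant $\bigl(\tfrac1{\sqrt2}+\sqrt\delta\bigr)^2 - \tfrac12 = \sqrt{2\delta} + \delta$ in the $O(\sqrt\delta)$ term. If one wanted a strictly positive lower bound (a true frame decomposition) one would additionally impose $\delta$ small, e.g. $\sqrt{2\delta}+\delta < \tfrac12$.

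If instead one wished to make the two-sided bound more symmetric or to obtain a better constant, an alternative is to apply $(KS_r^\infty)$ with a large $r$ and then group the $r$ pieces into two blocks of roughly $r/2$ each; but this does not improve the order $\sqrt\delta$ and introduces bookkeeping, so the direct $r=2$ argument above is cleanest. Hence the proof is just: invoke $(KS_2^\infty)$'s partition from $(KS_r^\infty)$, use it for the upper bound on each piece, and use the Parseval identity to transfer that upper bound on one piece into the lower bound on the other.
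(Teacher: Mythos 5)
Your proposal is correct and matches the paper's argument: the paper likewise applies $(KS_r^\infty)$ with $r=2$ to get the upper bound $\tfrac12+\sqrt{2\delta}+\delta$ on each piece, and then subtracts from the Parseval identity to obtain the lower bound. No issues.
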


\begin{lemma} $(KS^\infty_r) \implies (KS^\infty_2)$.
\end{lemma}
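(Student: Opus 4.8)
The plan is to apply the hypothesis $(KS^\infty_r)$ with a suitably large value of $r$ and then regroup the resulting $r$ pieces into two blocks, arranging that each block has mass close to $\frac12$ in both directions. Start with a Parseval frame $\{u_i\}_{i\in I}$ with $\|u_i\|^2 \le \delta$; since it is Parseval it is in particular a Bessel sequence with bound $1$, so $(KS^\infty_r)$ applies for every $r$. Fix $r$ (to be chosen as a function of $\delta$, roughly $r \asymp 1/\sqrt\delta$, so that $(1/\sqrt r + \sqrt\delta)^2 = O(\sqrt\delta)$) and obtain a partition $\{J_1,\dots,J_r\}$ of $I$ with $\|\sum_{i\in J_k} u_i\otimes u_i\| \le (1/\sqrt r + \sqrt\delta)^2 =: \eta = O(\sqrt\delta)$ for each $k$. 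Writing $S_k = \sum_{i\in J_k} u_i\otimes u_i$, these are positive operators with $\sum_{k=1}^r S_k = I$ (the Parseval identity) and $0 \le S_k \le \eta I$.

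The heart of the argument is then a purely operator-theoretic rounding step: given positive operators $S_1,\dots,S_r$ with $\sum S_k = I$ and $\|S_k\| \le \eta$, one can split $[r] = L_1 \sqcup L_2$ so that $\sum_{k\in L_1} S_k$ and $\sum_{k\in L_2} S_k$ are each within $O(\eta)$ of $\frac12 I$ in operator norm. Setting $I_\ell = \bigcup_{k\in L_\ell} J_k$ then gives $\frac{\sum_{k\in L_\ell} S_k$, and the two-sided bound $\frac12 - O(\sqrt\delta) \le \sum_{i\in I_\ell} |\langle u,u_i\rangle|^2 \le \frac12 + O(\sqrt\delta)$ follows from $\|\,\sum_{k\in L_\ell} S_k - \tfrac12 I\,\| = O(\sqrt\delta)$, which is exactly \eqref{mss4}; the lower bound also certifies that each $\{u_i\}_{i\in I_\ell}$ is genuinely a frame, not merely Bessel. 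A clean way to realize the rounding in the finite-dimensional case is a greedy/Steinitz-type argument: order the blocks and add them one at a time to whichever of $L_1,L_2$ currently has the smaller partial sum (in, say, trace or in a fixed spectral sense), keeping the running discrepancy controlled by the size of a single block, which is at most $\eta$. In the separable infinite-dimensional setting one first reduces to a finite problem exactly as in Lemma~\ref{ksinf} (truncate to $\{u_i\}_{i\in[n]}$, complete to a Parseval frame, apply the finite statement, and extract a global partition via the pinball principle, Lemma~\ref{pinball}), so no genuinely new infinite-dimensional difficulty arises.

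The main obstacle is the rounding step: balancing $r$ positive operators (which need not commute) into two groups each close to $\frac12 I$ is more delicate than the scalar case, because controlling the operator norm of the partial sums requires controlling all eigenvalues simultaneously rather than a single scalar. I expect the cleanest route is to iterate the two-element version: note that $(KS^\infty_r)$ for $r=2$ is not available (that is what we are proving), but $(KS^\infty_r)$ already hands us the fine partition, and then one can merge pieces pairwise — pair up $J_1$ with $J_2$, $J_3$ with $J_4$, etc. — halving the number of pieces at each stage while only doubling the per-piece bound, until two pieces remain; tracking the constants shows the final discrepancy is still $O(\sqrt\delta)$ provided $r$ was taken to be a sufficiently large constant times $\delta^{-1}$. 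Either way, the quantitative bookkeeping that turns $(1/\sqrt r + \sqrt\delta)^2$ with the right choice of $r$ into the stated $O(\sqrt\delta)$ in \eqref{mss4} is the only real content; everything else is the truncation-and-pinball reduction already used above.
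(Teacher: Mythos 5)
Your proposal has a genuine gap, and it also misses the short route. The paper's proof is a one-liner: apply the hypothesis $(KS^\infty_r)$ directly with $r=2$. The resulting upper bound is $\bigl(\tfrac1{\sqrt2}+\sqrt\delta\bigr)^2=\tfrac12+\sqrt{2\delta}+\delta=\tfrac12+O(\sqrt\delta)$, which is exactly what \eqref{mss4} asks for (the constant in $O(\sqrt\delta)$ need not be small), and the lower bound follows by subtracting this upper bound from the Parseval identity \eqref{mss3}. No regrouping, no large $r$, and no pinball reduction is needed, since $(KS^\infty_r)$ is already the infinite-dimensional statement.

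The gap in your route is the ``rounding'' step, which you correctly identify as the heart of your argument but do not prove. Given positive operators $S_1,\dots,S_r$ with $\sum_k S_k=\mathbf I$ and $\|S_k\|\le\eta$, splitting them into two groups each within $O(\eta)$ of $\tfrac12\mathbf I$ in operator norm is itself a Weaver-type discrepancy statement; neither of your suggested realizations delivers it. The greedy argument balances a scalar proxy (trace or one spectral quantity), which does not control all eigenvalues of the noncommuting partial sums, so it gives no operator-norm bound. The pairwise-merging argument only uses the triangle inequality, so after collapsing $r$ pieces to two groups the bound is about $\tfrac r2\eta=\tfrac12\bigl(1+\sqrt{r\delta}\bigr)^2$; with your choice $r\asymp\delta^{-1}$ this is a constant strictly larger than $\tfrac12$, and even with $r\asymp\delta^{-1/2}$ it is only $\tfrac12+O(\delta^{1/4})$, not $\tfrac12+O(\sqrt\delta)$. (Amusingly, taking $r=4$ and merging into two pairs by the triangle inequality would give $2\bigl(\tfrac12+\sqrt\delta\bigr)^2=\tfrac12+2\sqrt\delta+2\delta$ and would work --- but that is just a roundabout version of the direct $r=2$ application.) As written, the proposal replaces a trivial deduction by an unproved balancing lemma that is at least as deep as the result being derived.
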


\begin{proof}
$(KS_r)$ for $r=2$ yields partition $\{I_1,I_2\}$ such that
\[
\sum_{i\in I_k} |\langle u, u_i \rangle |^2 \le \frac12 + \sqrt{2\delta} + \delta \qquad\text{for }||u||=1, \ k=1,2.
\]
Subtracting the equality \eqref{mss3} yields the lower bound in \eqref{mss4}.
\end{proof}

\begin{remark}\label{rem}
Note that the bound \eqref{mss4} produces something non-trivial only for $0<\delta<(2+\sqrt{2})^{-2} \approx 0.0857864$. Casazza, Marcus, Speegle, and the author \cite{BCMS} have shown the improved bound in $(KS_2)$. For $0<\delta<\frac14$, the bound \eqref{mss4} holds where $O(\sqrt{\delta})$ is replaced by $\sqrt{2\delta(1-2\delta)}$. Therefore, by a variant of Lemma \ref{kspp} we have the following variant of paving for projections.
\end{remark}

\begin{theorem*}[$PP_\delta$] Let $0<\delta<1/4$. Every projection $P$ on $\ell^2(I)$,  i.e., $P=P^*$ and $P^2=P$, with all diagonal entries $\le  \delta$ can be $(2,\frac12 + \sqrt{2\delta(1-2\delta)})$-paved.
\end{theorem*}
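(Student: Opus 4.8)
The plan is to run the proof of Lemma~\ref{kspp} almost verbatim, but to feed in the sharpened two-set Weaver bound of Casazza, Marcus, Speegle, and the author \cite{BCMS} recorded in Remark~\ref{rem} in place of the generic $(KS^\infty_r)$ estimate. We may assume $P\neq\mathbf 0$ (the case $P=\mathbf 0$ being trivial), so that $\|P\|=1$. Set $u_i=Pe_i$ for $i\in I$, where $\{e_i\}_{i\in I}$ is the standard orthonormal basis of $\ell^2(I)$. Exactly as in Lemma~\ref{kspp}, $P$ is the Gram matrix $(\langle u_i,u_j\rangle)_{i,j\in I}$; and since for $u\in\operatorname{ran}(P)$ one has $\langle u,u_i\rangle=\langle Pu,e_i\rangle=\langle u,e_i\rangle$, Parseval's identity shows that $\{u_i\}_{i\in I}$ is a Parseval frame for the separable Hilbert space $\operatorname{ran}(P)$. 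Moreover $\|u_i\|^2=\langle Pe_i,Pe_i\rangle=\langle Pe_i,e_i\rangle=P_{ii}\le\delta$.

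Next I would invoke the improved two-set bound. By \cite{BCMS}, for $0<\delta<\tfrac14$ the conclusion of $(KS_2)$ holds with the Bessel bound $\tfrac12+\sqrt{2\delta(1-2\delta)}$ in place of $\bigl(\tfrac1{\sqrt2}+\sqrt\delta\bigr)^2$; lifting this from finite Parseval frames to the countable family $\{u_i\}_{i\in I}$ by the pinball principle (Lemma~\ref{pinball}), precisely as $(KS_r)$ was upgraded to $(KS^\infty_r)$ in Lemma~\ref{ksinf}, produces a partition $\{I_1,I_2\}$ of $I$ with
\[
\sum_{i\in I_k}|\langle u,u_i\rangle|^2\ \le\ \frac12+\sqrt{2\delta(1-2\delta)}\qquad\text{for all }u\in\operatorname{ran}(P),\ \|u\|=1,\ k=1,2.
\]
Equivalently, the frame operator of each subfamily obeys $\bigl\|\sum_{i\in I_k}u_i\otimes u_i\bigr\|\le\tfrac12+\sqrt{2\delta(1-2\delta)}$.

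Finally I would translate this back into a paving estimate. As in Lemma~\ref{kspp}, the block $P_{I_k}PP_{I_k}$, regarded as an operator on $\ell^2(I_k)$, is the Gram matrix $(\langle u_i,u_j\rangle)_{i,j\in I_k}$, whose norm coincides with that of the frame operator $\sum_{i\in I_k}u_i\otimes u_i$ because $\|TT^*\|=\|T^*T\|$. Hence
\[
\|P_{I_k}PP_{I_k}\|\ =\ \bigg\|\sum_{i\in I_k}u_i\otimes u_i\bigg\|\ \le\ \frac12+\sqrt{2\delta(1-2\delta)}\ =\ \Bigl(\frac12+\sqrt{2\delta(1-2\delta)}\Bigr)\|P\|,\qquad k=1,2,
\]
which is exactly the asserted $\bigl(2,\tfrac12+\sqrt{2\delta(1-2\delta)}\bigr)$-paving of $P$.

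Since the statement is really a repackaging of Remark~\ref{rem} and Lemma~\ref{kspp}, no step presents a genuine obstacle. The only points needing a word of care are: (a) the passage from the finite-dimensional formulation of \cite{BCMS} to the countable-index setting, dispatched by the pinball principle exactly as in Lemma~\ref{ksinf}; and (b) the harmless relaxation from $\|u_i\|^2=\delta$ to $\|u_i\|^2\le\delta$, legitimate because all of these statements only bound the diagonal from above and ask only for an upper Bessel bound, so one may always adjoin extra vectors of norm $\le\sqrt\delta$ (as in the first paragraph of the proof of Lemma~\ref{ksinf}) before applying the two-set bound. Finally, the hypothesis $\delta<\tfrac14$ is precisely what makes $\tfrac12+\sqrt{2\delta(1-2\delta)}<1$, so that the paving is non-trivial; at $\delta=\tfrac14$ the bound degenerates to $1$.
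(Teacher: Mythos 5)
Your proposal is correct and is essentially the argument the paper intends: the paper only sketches $(PP_\delta)$ in Remark~\ref{rem} as ``the improved $(KS_2)$ bound of \cite{BCMS} plus a variant of Lemma~\ref{kspp}'', and you have carried out exactly that, with the Gram/frame-operator identification $\|P_{I_k}PP_{I_k}\|=\|\sum_{i\in I_k}u_i\otimes u_i\|$ and the pinball passage to countable $I$ handled correctly. The only quibble is your closing aside: $\tfrac12+\sqrt{2\delta(1-2\delta)}<1$ actually holds for all $\delta\in(0,\tfrac12)$ except $\delta=\tfrac14$, so the restriction $\delta<\tfrac14$ comes from the hypothesis of the \cite{BCMS} two-value case of $(MSS)$, not from nontriviality of the bound --- but this does not affect the proof.
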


\subsection{Notes}
Another well-known equivalent of the Kadison-Singer problem, which we didn't discuss here, is a relative Dixmier property studied by Berman, Halpern, Kaftal, and Weiss \cite{BHKW, HKW, HKW2, HKW3}. Every bounded operator $T\in \mathcal B(\ell^2(I))$ satisfies
\[
\mathbb E(T) \in \overline{\operatorname{conv}} \{ U T U^*: U \text{ is a diagonal unitary on $\ell^2(I)$}\}.
\]
The connection of the Kadison-Singer problem with paving was investigated by Anderson and Akemann \cite{AA, An1, An2, An3}. A streamlined presentation of paving implications presented here has been shown by Casazza, Edidin, Kalra, and Paulsen \cite{CEKP}. A pinball principle, Lemma \ref{pinball}, was shown in \cite{CCLV}.

For each of the classes of matrices/operators considered above, such as:
\begin{itemize}
\item[$(B)$] bounded matrices with zero diagonal,
\item[$(S)$] self-adjoint matrices with zero diagonal,
\item[$(P_{\frac12})$] projections with $\frac12$ on diagonal,
\end{itemize}
we can ask for the smallest $r\in \N$ such that  all matrices in this class have $(r,\ve)$-paving for some $\ve<1$. By keeping track of the values in Lemma \ref{kspp}, we have shown $(12,\ve)$ paving for $(P_{\frac12})$ and thus $(12^2,\ve)$ paving for $(S)$ for some $\ve<1$. This was recently improved by Ravichandran \cite{Rav} who has shown $(4,\ve)$ paving for $(P_{\frac12})$ and thus $(16,\ve)$ paving for $(S)$. It is known that $(2,\ve)$ paving does not work for $(P_{\frac12})$, see \cite{CFMT}. Does $(3,\ve)$ paving work for $(P_{\frac12})$? Likewise, we can ask for largest $\delta$ such that $(2,\ve)$ paving works for all projections with $\delta$ on diagonal.
Paving property can be formulated for other operator norms and matrices with zero diagonal. However, paving remains an open problem for operator  $\ell^p$ norms, $p\ne 2$, though Schechtman \cite{Sch} has recently shown paving for the Schatten $C_p$ class norm for $2<p<\infty$ extending earlier results of Berman, Halpern, Kaftal, and Weiss \cite{BHKW2}.

\section{Proof of Weaver's conjecture}

Weaver's conjecture is a consequence of the following probabilistic result due to Marcus, Spielman, and Srivastava \cite{MSS}. The special case was shown by Casazza, Marcus, Speegle, and the author \cite{BCMS}.

\begin{theorem*}[$MSS$]\label{thmp}
Let $\epsilon > 0$. Suppose that $v_1, \dots, v_m$ are jointly independent random vectors in $\C^d$, which take finitely many values and satisfy  
\begin{equation}\label{thmp1}
\sum_{i=1}^m \E{}{v_{i} v_{i}^{*}} = \bI
\qquad\text{and}\qquad
\E{}{ \|v_{i}\|^{2}} \leq \epsilon \quad\text{for all } i.
\end{equation}
Then,
\begin{equation}\label{thmp2}
\P{}{\left\| \sum_{i=1}^m v_i v_i^{*} \right\| \leq (1 + \sqrt{\epsilon})^2} > 0.
\end{equation}
In the special case when  $v_1,\ldots,v_m$ take at most two values and $\epsilon<1/4$, we have
\[
\P{}{\left\| \sum_{i=1}^m v_i v_i^{*} \right\| \leq 1 + 2 \sqrt{\epsilon} \sqrt{1-\epsilon}} > 0.
\]
\end{theorem*}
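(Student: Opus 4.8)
The plan is to establish $(MSS)$ by the \emph{method of interlacing polynomials} of Marcus, Spielman and Srivastava, combined with the theory of real stable polynomials. For a Hermitian $M\succeq 0$ write $\chi_M(x):=\det(xI-M)$, so that $\|M\|=\maxroot(\chi_M)$. \textbf{Step 1 (reduction to the expected characteristic polynomial).} Since the $v_i$ are independent and take finitely many values, there are only finitely many outcomes $\omega=(w_1,\dots,w_m)$, so it suffices to exhibit \emph{one} outcome with $\bigl\|\sum_i w_iw_i^*\bigr\|\le(1+\sqrt\epsilon)^2$. I would organize the polynomials $q_\omega(x):=\chi_{\sum_i w_iw_i^*}(x)$, weighted by $\mathbb{P}(\omega)$, on the depth-$m$ tree of partial value-assignments $(w_1,\dots,w_k)$, and show that this is an \emph{interlacing family}. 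The interlacing-family theorem then produces an outcome $\omega^*$ with
\[
\maxroot(q_{\omega^*})\ \le\ \maxroot\Bigl(\sum_\omega\mathbb{P}(\omega)\,q_\omega\Bigr)\ =\ \maxroot\bigl(\mathbb{E}\,\chi_{\sum_i v_iv_i^*}\bigr),
\]
so everything reduces to (a) verifying the interlacing structure and (b) proving $\maxroot(\mathbb{E}\,\chi_{\sum_i v_iv_i^*})\le(1+\sqrt\epsilon)^2$.

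\textbf{Step 2 (real-rootedness and the interlacing structure).} For independent finitely supported $v_i$ with $B_i:=\mathbb{E}\,v_iv_i^*$ one has the identity
\[
\mathbb{E}\,\chi_{\sum_i v_iv_i^*}(x)\ =\ \Bigl(\prod_{i=1}^{m}(1-\partial_{z_i})\Bigr)\det\Bigl(xI+\sum_{i=1}^{m}z_iB_i\Bigr)\Big|_{z_1=\dots=z_m=0};
\]
this follows from the rank-one matrix determinant lemma, which makes both sides multi-affine in the rank-one terms and equal in the deterministic rank-one case, so taking expectations replaces each $v_iv_i^*$ by $B_i$. The polynomial $\det(xI+\sum_i z_iB_i)$ is real stable ($I\succ0$, $B_i\succeq0$, so the matrix $xI+\sum_i z_iB_i$ has positive definite imaginary part, hence is invertible, whenever $x,z_1,\dots,z_m$ lie in the open upper half-plane), and both $1-\partial_{z_i}$ and specialization of a variable to a real number preserve real stability; therefore $\mathbb{E}\,\chi_{\sum_i v_iv_i^*}$ is real-rooted. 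The very same computation shows that any convex combination of the children of a vertex $(w_1,\dots,w_k)$ is again the expected characteristic polynomial of $m$ independent rank-one updates — the one in which $v_{k+1}$ carries the mixing distribution — hence real-rooted. By the classical equivalence (a finite family of real-rooted polynomials with positive leading coefficients and common degree has a common interlacer iff all its convex combinations are real-rooted), the children of each vertex have a common interlacer, which is exactly the hypothesis needed in Step 1.

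\textbf{Step 3 (root bound via the multivariate barrier argument).} Using $\sum_i B_i=I$ one rewrites $\mathbb{E}\,\chi_{\sum_i v_iv_i^*}(x)=\bigl(\prod_{i=1}^m(1-\partial_{z_i})\bigr)Q(z)\big|_{z=x\mathbf{1}}$ with $Q(z):=\det(\sum_i z_iB_i)$ homogeneous and real stable, $\mathbf 1=(1,\dots,1)$. Fix $x_0>\epsilon$. Then $x_0\mathbf{1}$ is above the roots of $Q$ and its barrier functions satisfy $\Phi^j_Q(x_0\mathbf{1})=\tr\bigl((x_0I)^{-1}B_j\bigr)=\tr(B_j)/x_0\le\epsilon/x_0<1$. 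Now peel off $1-\partial_{z_1},\dots,1-\partial_{z_m}$ one at a time, applying at each step the MSS deletion lemma: if $p$ is real stable, $z$ is above its roots and $\Phi^i_p(z)<1$, then $z+\frac{1}{1-\Phi^i_p(z)}e_i$ lies above the roots of $(1-\partial_{z_i})p$ and every barrier of $(1-\partial_{z_i})p$ there is at most the corresponding barrier of $p$ at $z$. Hence after $m$ steps each coordinate has been increased by at most $\frac{1}{1-\epsilon/x_0}$ while all barriers have stayed $\le\epsilon/x_0$, so $\bigl(x_0+\frac{1}{1-\epsilon/x_0}\bigr)\mathbf{1}$ lies above the roots of $\prod_i(1-\partial_{z_i})Q$; by monotonicity of ``above the roots'' this yields
\[
\maxroot\bigl(\mathbb{E}\,\chi_{\sum_i v_iv_i^*}\bigr)\ \le\ x_0+\frac{1}{1-\epsilon/x_0}\ =\ (1+t)\Bigl(1+\frac{\epsilon}{t}\Bigr),\qquad t:=x_0-\epsilon,
\]
and minimizing over $t>0$ at $t=\sqrt\epsilon$ gives $(1+\sqrt\epsilon)^2$. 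Together with Steps 1--2 this proves \eqref{thmp2}. For the two-value statement, write $v_i=a_i$ with probability $p_i$ and $v_i=b_i$ otherwise; the relevant averaging now acts on two auxiliary variables attached to $a_ia_i^*$ and $b_ib_i^*$, and running the same interlacing-family/real-stability scheme with this operator in place of $1-\partial_{z_i}$, together with the sharper barrier estimate of \cite{BCMS} adapted to balanced rather than one-sided rank-one increments, replaces the displayed quantity by one that optimizes to $1+2\sqrt{\epsilon(1-\epsilon)}$ when $\epsilon<1/4$.

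\textbf{Main obstacle.} The technical heart is Step 3: developing the notion of a point lying above the roots of a real stable polynomial, proving that the barrier functions $\Phi^j_p$ are nonnegative, nonincreasing and convex there, and — above all — establishing the quantitative deletion lemma controlling how every barrier moves under $p\mapsto(1-\partial_{z_i})p$ together with the accompanying coordinate shift. Re-deriving this estimate for the two-variable averaging operator is precisely where the improved constant $1+2\sqrt{\epsilon(1-\epsilon)}$ is extracted.
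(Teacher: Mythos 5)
Your proposal is correct and follows essentially the same route as the paper: reduce to the mixed characteristic polynomial $\mu[\E{}{X_1},\ldots,\E{}{X_m}]$ via multi-affinity and independence, pass to a single outcome by the interlacing argument, and bound the largest root by the multivariate barrier/deletion argument, optimizing $t+\frac{1}{1-\epsilon/t'}$ to get $(1+\sqrt{\epsilon})^2$; like the paper, you defer the two-value refinement to \cite{BCMS}. The only cosmetic difference is that you invoke the classical common-interlacer equivalence for Step 1, whereas the paper (following Tao) replaces it with an elementary Rouch\'e-type lemma on maxroots of convex combinations of stable polynomials.
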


\begin{lemma} $(MSS) \implies (KS_r) $.
\end{lemma}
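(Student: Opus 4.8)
The plan is to derive the Weaver-type statement $(KS_r)$ from the probabilistic statement $(MSS)$ by the standard ``randomized partition'' trick: given a Parseval frame $\{u_i\}_{i\in[m]}$ with $\|u_i\|^2\le\delta$, I will assign each index $i$ independently and uniformly at random to one of the $r$ blocks $I_1,\dots,I_r$, and encode this choice in a random vector $v_i\in\C^{rd}$ living in the $r$-fold direct sum $\C^d\oplus\cdots\oplus\C^d$. Concretely, if $i$ is assigned to block $k$, set $v_i$ to be the vector whose $k$-th coordinate block is $\sqrt r\,u_i$ and whose other blocks are zero; thus $v_i$ takes $r$ values, each with probability $1/r$. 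Then $\E{}{v_iv_i^*}$ is the block-diagonal matrix with each diagonal block equal to $\tfrac1r\cdot r\,(u_i u_i^*)=u_iu_i^*$, so $\sum_i\E{}{v_iv_i^*}$ is the block-diagonal matrix with all $r$ diagonal blocks equal to $\sum_i u_iu_i^*=\bI_d$; that is, $\sum_i\E{}{v_iv_i^*}=\bI_{rd}$. Also $\E{}{\|v_i\|^2}=\tfrac1r\sum_{k=1}^r r\|u_i\|^2=r\|u_i\|^2/1$... wait, more carefully $\E{}{\|v_i\|^2}=r\|u_i\|^2\cdot\tfrac1r\cdot r$; I will instead scale by $\sqrt{r}$ so that $\|v_i\|^2=r\|u_i\|^2$ on the support and $\E{}{\|v_i\|^2}=r\|u_i\|^2\le r\delta$. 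Hmm — that is too large; the correct normalization is to \emph{not} rescale and instead observe that with the assignment encoding $v_i=\sqrt r\,(0,\dots,u_i,\dots,0)$ we get $\E{}{\|v_i\|^2}=\|u_i\|^2\le\delta$, since $\Pr[\text{block }k]\cdot\|\sqrt r u_i\|^2=\tfrac1r\cdot r\|u_i\|^2=\|u_i\|^2$. So $\epsilon=\delta$ works, and the hypotheses of $(MSS)$ hold.

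Next I apply $(MSS)$ with $\epsilon=\delta$: there is a positive-probability event on which $\big\|\sum_i v_iv_i^*\big\|\le(1+\sqrt\delta)^2$. On that event fix a realization; it determines a genuine partition $\{I_1,\dots,I_r\}$ of $[m]$, and the matrix $\sum_i v_iv_i^*$ is block-diagonal with $k$-th diagonal block equal to $\sum_{i\in I_k} r\,u_iu_i^*=r\sum_{i\in I_k}u_iu_i^*$. Hence $\big\|r\sum_{i\in I_k}u_iu_i^*\big\|\le(1+\sqrt\delta)^2$ for every $k$, i.e.
\[
\Big\|\sum_{i\in I_k}u_iu_i^*\Big\|\le\frac{(1+\sqrt\delta)^2}{r}=\Big(\frac1{\sqrt r}+\frac{\sqrt\delta}{\sqrt r}\Big)^2.
\]
That last bound is $(\tfrac1{\sqrt r}+\tfrac{\sqrt\delta}{\sqrt r})^2$, which is \emph{smaller} than the claimed $(\tfrac1{\sqrt r}+\sqrt\delta)^2$, so in particular \eqref{mss2} holds; equivalently, $\sum_{i\in I_k}|\langle u,u_i\rangle|^2=\langle(\sum_{i\in I_k}u_iu_i^*)u,u\rangle\le(\tfrac1{\sqrt r}+\sqrt\delta)^2\|u\|^2$, which is exactly the Bessel bound asserted in $(KS_r)$. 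I should double-check the normalization once more: scaling $v_i$ by $c$ scales $\E{}{v_iv_i^*}$ by $c^2/r$ per block, so to land on $\bI$ I need $c^2/r=1$, i.e. $c=\sqrt r$ — consistent with the above — and then $\E{}{\|v_i\|^2}=(c^2/r)\cdot r\|u_i\|^2=\|u_i\|^2\le\delta$, confirmed.

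The only genuine subtlety — and the step to state carefully rather than a real obstacle — is that $(MSS)$ requires the $v_i$ to take \emph{finitely many} values, which is automatic here since each $v_i$ takes exactly $r$ values. A second point worth a sentence is the two-value refinement: for $r=2$ and $\delta<1/4$ the vectors $v_i$ take exactly two values, so the special case of $(MSS)$ applies and gives $\big\|\sum_{i\in I_k}u_iu_i^*\big\|\le\tfrac12(1+2\sqrt\delta\sqrt{1-\delta})$, i.e. $\le\tfrac12+\sqrt{\delta(1-\delta)}$ — better than $\tfrac12+\sqrt{2\delta(1-2\delta)}$ for small $\delta$ — which one can quote to recover the sharp $(KS_2)$ constant referenced in Remark~\ref{rem} (modulo matching constants). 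In short, the whole lemma is: encode the random $r$-partition as direct-sum random vectors, check the two normalization conditions, invoke $(MSS)$, and read off the per-block operator-norm bound from block-diagonality.
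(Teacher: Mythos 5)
The construction you use (place $\sqrt{r}\,u_i$ in a uniformly random block of $\C^{rd}$) is exactly the paper's, but your verification of the second hypothesis of $(MSS)$ contains a genuine error. Every one of the $r$ values that $v_i$ can take has squared norm $\|\sqrt{r}\,u_i\|^2=r\|u_i\|^2$, so
$\mathbb{E}\,\|v_i\|^2=\sum_{k=1}^r \tfrac1r\cdot r\|u_i\|^2=r\|u_i\|^2\le r\delta$,
not $\|u_i\|^2\le\delta$: in your ``confirmation'' you computed only the single term $\tfrac1r\cdot r\|u_i\|^2$ and forgot to sum over the $r$ possible blocks. Your first instinct ($\mathbb{E}\,\|v_i\|^2=r\|u_i\|^2$, which you dismissed as ``too large'') was the correct one, and it is not too large: applying $(MSS)$ with $\epsilon=r\delta$ gives $\bigl\|\sum_i v_iv_i^*\bigr\|\le(1+\sqrt{r\delta})^2$ with positive probability, and dividing by $r$ in each diagonal block yields $\bigl\|\sum_{i\in I_k}u_iu_i^*\bigr\|\le\frac1r(1+\sqrt{r\delta})^2=\bigl(\frac1{\sqrt r}+\sqrt\delta\bigr)^2$, which is precisely \eqref{mss2}.

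The fact that your normalization produced the strictly stronger bound $(1+\sqrt\delta)^2/r$ should have been a red flag: that bound is false. If some $u_{i_0}$ has $\|u_{i_0}\|^2=\delta$, then whichever block contains it satisfies $\bigl\|\sum_{i\in I_k}u_iu_i^*\bigr\|\ge\delta$, whereas $(1+\sqrt\delta)^2/r<\delta$ once $r$ is large. The same slip affects your closing remark on the two-valued case: for $r=2$ the correct parameter is $\epsilon=2\delta$, which leads to the bound $\tfrac12+\sqrt{2\delta(1-2\delta)}$ quoted in Remark~\ref{rem}, not $\tfrac12+\sqrt{\delta(1-\delta)}$. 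Apart from this normalization error (which is exactly the step the lemma needs you to get right, since it determines the final constant), the structure of your argument --- finitely many values, block-diagonality of $\sum_i v_iv_i^*$, reading off the per-block norm --- coincides with the paper's proof.
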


\begin{proof} Assume $\{u_i\}_{i \in [m]} \subset \C^d$ satisfies \eqref{mss3}. For any $r\in \N$, let $v_1,\ldots,v_m$ be independent random vectors in $(\C^d)^{\oplus r}=\C^{rd}$ such that each vector $v_i$ takes $r$ values
\[
\begin{bmatrix} \sqrt{r}  u_i \\ 0 \\ \vdots  \\0 \end{bmatrix}, \ldots,
\begin{bmatrix}    0 \\ \vdots  \\ 0 \\ \sqrt{r}u_i \end{bmatrix}
\]
each with probability $\frac1r$. Then,
\[
\sum_{i=1}^m \E{}{v_i v_i^*} 
=  \begin{bmatrix} \sum_{i=1}^m u_i u_i^* &   &  \\   & \ddots &  \\  &  & \sum_{i=1}^m u_i u_i^*\end{bmatrix} = 
\begin{bmatrix} \mathbf I_d &   &  \\   & \ddots &  \\  &  & \mathbf I_d \end{bmatrix} =\mathbf I_{dr},
\]
and
\[
\E{}{||v_i||^2 } =  r||u_i||^2 \le \epsilon:=r\delta. 
\]
Hence, we can apply $(MSS)$ to deduce \eqref{thmp1}. Choose an outcome for which the bound in \eqref{thmp2} happens. For this outcome define
\[
I_k = \{i \in [m]: v_i \text{ is  non-zero in $k^{\rm th}$ entry} \}, \qquad\text{for } k=1,\ldots,r.
\]
Thus, the block diagonal matrix
\[
\sum_{i=1}^m v_i v_i^* 
= \begin{bmatrix} r \sum_{i\in I_1} u_i u_i^* &   &  \\   & \ddots &  \\  &  & r \sum_{i\in I_r} u_i u_i^*\end{bmatrix}
\]
has norm bounded by $(1 + \sqrt{\epsilon})^2$. This implies that each block has norm
\[
\bigg\| \sum_{i\in I_k} u_i u_i^* \bigg\| \le \frac1r (1+ \sqrt{r\delta})^2 =  \left(\frac{1}{\sqrt{r}} + \sqrt{\delta}\right)^2.
\]
Since a rank one operator $u_i \otimes u_i$ on $\C^d$ is represented by the $d\times d $ matrix $u_i u_i^*$, we obtain $(KS_r)$.
\end{proof}

\subsection{Mixed characteristic polynomial}
The main result of this section involves the concept of a mixed characteristic polynomial (MCP).

\begin{definition}
Let $A_1, \ldots, A_m$ be $d\times d$ matrices. The mixed characteristic polynomial is defined as for $z\in \C$ by
\[
\mu[A_1,\ldots,A_m](z) =  \bigg(\prod_{i=1}^m (1 - \partial_{z_i}) \bigg) \det \bigg( z \mathbf I + \sum_{i=1}^m z_i A_i \bigg)\bigg|_{z_1=\ldots=z_m=0}.
\]
\end{definition}

By determinant expansion one can show that $\det \bigg( z \mathbf I + \sum_{i=1}^m z_i A_i \bigg)$ is  a polynomial in $\C[z,z_1,\ldots,z_m]$ of degree $\le d$. Hence, $\mu[A_1,\ldots,A_m](z)$ is a polynomial in $\C[z]$ of degree $\le d$. These polynomials satisfy a number of interesting properties if $A_1, \ldots, A_m$ are positive definite. 

\begin{theorem*}[$MCP$]\label{mixed}
Let $\epsilon>0$. Suppose $A_1, \ldots, A_m$ are $d \times d$ positive semidefinite matrices satisfying
\begin{equation}\label{mixed1}
\sum_{i=1}^m A_i = \bI 
\qquad\text{and}\qquad
\operatorname{Tr}(A_i) \le \epsilon \quad\text{for all i}.
\end{equation}
Then, all roots of the mixed characteristic polynomial $\mu[A_1,\ldots, A_m]$ are real and the largest root
is at most $(1 + \sqrt{\epsilon})^2$.
\end{theorem*}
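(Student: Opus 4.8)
The plan is to follow the multivariate barrier method of Marcus, Spielman and Srivastava, treating the two claims separately. For real-rootedness, recall that $p\in\R[z_1,\dots,z_n]$ is \emph{real stable} if it does not vanish whenever all $z_i$ have strictly positive imaginary part. First I would check that $Q(\mathbf z):=\det\bigl(\sum_{i=1}^m z_iA_i\bigr)$ is real stable: its coefficients are real because each $A_i$ is Hermitian, and if $\Im z_i>0$ for all $i$ then $\sum_i(\Im z_i)A_i\succeq(\min_i\Im z_i)\sum_iA_i=(\min_i\Im z_i)\bI\succ0$, so $\sum_iz_iA_i=H_1+\mathrm{i}H_2$ with $H_2$ positive definite, and such a matrix is invertible. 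Then I would use two standard closure properties of real stability: each operator $1-\partial_{z_i}$ sends real stable polynomials to real stable polynomials (the one-variable fact that $p-p'$ is real-rooted when $p$ is, upgraded via restriction to lines and Hurwitz's theorem), and specializing a real stable polynomial at real values of some of its variables gives a polynomial that is zero or real stable in the rest. Since $z\bI+\sum_iz_iA_i=\sum_i(z+z_i)A_i$, the chain rule yields $\mu[A_1,\dots,A_m](z)=g(z,z,\dots,z)$, where $g(\mathbf z):=\prod_{i=1}^m(1-\partial_{z_i})Q(\mathbf z)$ is real stable. A degree count shows $\mu$ is monic of degree $d$, so $g$ does not vanish identically on the diagonal; hence its diagonal restriction $\mu$ is a real stable univariate polynomial, i.e.\ real-rooted.

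For the root bound I would set up barrier functions. For real stable $p$ in $n$ variables, call $\mathbf x\in\R^n$ \emph{above the roots} if $p(\mathbf x+\mathbf t)\ne0$ for all $\mathbf t\ge0$ (coordinatewise), and for such $\mathbf x$ put $\Phi^j_p(\mathbf x):=\partial_{z_j}\log p(\mathbf x)$. Restricting all coordinates but the $j$-th to their real values reduces $\Phi^j_p$ to $\sum_k 1/(x_j-\lambda_k)$ over the real roots $\lambda_k<x_j$ of a real-rooted univariate polynomial, which makes clear that $\Phi^j_p(\mathbf x)\ge0$ and that $\Phi^j_p$ is nonincreasing and convex as a function of $x_j$; the genuinely multivariate fact — where real stability, not merely real-rootedness of restrictions, is needed — is that $\Phi^j_p$ is also nonincreasing in every other coordinate. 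The workhorse is the MSS step lemma: if $p$ is real stable, $\mathbf x$ is above its roots, $j\in[m]$, and $\delta>0$ satisfies $\Phi^j_p(\mathbf x)\le 1-\tfrac1\delta$, then $\mathbf x+\delta e_j$ is above the roots of $(1-\partial_{z_j})p$ and
\[
\Phi^k_{(1-\partial_{z_j})p}(\mathbf x+\delta e_j)\le\Phi^k_p(\mathbf x)\qquad\text{for every }k\in[m].
\]
I would prove this by writing $(1-\partial_{z_j})p=p\,(1-\Phi^j_p)$, using monotonicity in $x_j$ to keep $1-\Phi^j_p$ positive as the $j$-th coordinate is raised (so the point stays above the roots) and convexity in $x_j$ to bound $\Phi^j_p$ at $\mathbf x+\delta e_j$ by $\Phi^j_p(\mathbf x)$; the inequalities for $k\ne j$ then follow from a short identity for $\partial_{z_k}\log[(1-\partial_{z_j})p]$ together with the cross-coordinate monotonicity.

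To assemble, start at $\mathbf x^{(0)}=(t,\dots,t)$ with $t>\epsilon$; this is above the roots of $Q$ since $\sum_i(t+s_i)A_i\succeq t\bI\succ0$ for $s_i\ge0$. By Jacobi's formula $\partial_{z_j}Q(\mathbf x^{(0)})=\det(t\bI)\operatorname{Tr}\bigl((t\bI)^{-1}A_j\bigr)=t^{d-1}\operatorname{Tr}(A_j)$, so $\Phi^j_Q(\mathbf x^{(0)})=\operatorname{Tr}(A_j)/t\le\epsilon/t$. Choose $\delta$ with $\tfrac1\delta=1-\tfrac\epsilon t$ and apply the step lemma successively in coordinates $j=1,2,\dots,m$ with this common shift: by conclusion $(ii)$ of the step lemma every barrier function stays $\le\epsilon/t=1-\tfrac1\delta$, so the hypothesis is met at each stage, and after $m$ steps we reach $\mathbf x^{(m)}=(t+\delta,\dots,t+\delta)$ above the roots of $g=\prod_i(1-\partial_{z_i})Q$. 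Restricting to the diagonal, $\mu(s)=g(s,\dots,s)\ne0$ for all $s\ge t+\delta$, hence $\maxroot(\mu)\le t+\delta$. Minimizing $t+\delta=t+\tfrac{t}{t-\epsilon}=1+(t-\epsilon)+\tfrac{\epsilon}{t-\epsilon}$ over $t>\epsilon$ gives the minimum at $t-\epsilon=\sqrt\epsilon$, where $t+\delta=1+2\sqrt\epsilon+\epsilon=(1+\sqrt\epsilon)^2$, which is the claimed bound. The main obstacle is the middle step: establishing the monotonicity (especially in the ``other'' coordinates) and convexity of barrier functions of real stable polynomials, and then the step lemma itself — this is the technical core of the Marcus–Spielman–Srivastava argument, whereas the real-stability bookkeeping and the final one-variable optimization are comparatively routine.
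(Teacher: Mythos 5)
Your proposal is correct and follows essentially the same route as the paper: real stability of the determinantal polynomial $\det\bigl(\sum_i z_iA_i\bigr)$, the translation identity using $\sum_i A_i=\bI$ to express $\mu$ as the diagonal restriction of $\prod_i(1-\partial_{z_i})Q$, the barrier monotonicity/convexity facts, the MSS step lemma applied coordinate-by-coordinate from $(t,\dots,t)$ with $\epsilon/t+1/\delta\le 1$, and the final optimization giving $t=\epsilon+\sqrt\epsilon$, $\delta=1+\sqrt\epsilon$. The only blemish is a harmless algebra slip in the intermediate expression for $t+\delta$ (it should read $1+\epsilon+(t-\epsilon)+\epsilon/(t-\epsilon)$), which does not affect your stated minimizer or the bound $(1+\sqrt\epsilon)^2$.
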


It remains to accomplish two major tasks: prove the implication $(MCP) \implies (MSS)$ and then show $(MCP)$. Before doing this we need to show a few basic properties of $\mu$.

\begin{lemma}\label{mas} For a fixed $z\in \C$, the mixed characteristic polynomial mapping 
\[
\mu: M_{d\times d}(\C) \times \ldots\times  M_{d \times d}(\C) \to \C
\]
 is multi-affine and symmetric. That is, $\mu$ affine in each variable and its value is the same  for any permutation of its arguments $A_1,\ldots,A_m$.
\end{lemma}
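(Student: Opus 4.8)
The plan is to handle symmetry and multi-affinity separately, using the former to reduce the latter to a single matrix variable.

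For symmetry, I would observe that $\det\bigl(z\bI+\sum_{i=1}^m z_iA_i\bigr)$ is invariant under any permutation that simultaneously relabels the pairs $(z_i,A_i)$, since this merely reorders the summands $z_iA_i$; meanwhile the operator $\prod_{i=1}^m(1-\partial_{z_i})$ and the substitution $z_1=\cdots=z_m=0$ are themselves symmetric in $z_1,\dots,z_m$. Renaming dummy variables then yields $\mu[A_{\sigma(1)},\dots,A_{\sigma(m)}](z)=\mu[A_1,\dots,A_m](z)$ for every permutation $\sigma$. This step is immediate.

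For multi-affinity it suffices, by the symmetry just established, to show that $\mu$ is affine in $A_1$ with $z,A_2,\dots,A_m$ held fixed. The key structural point is that in the matrix $z\bI+\sum_i z_iA_i$ — and hence in $P:=\det\bigl(z\bI+\sum_i z_iA_i\bigr)$ — the scalar $z_1$ and the matrix $A_1$ occur only through the product $z_1A_1$. I would make the homogeneity precise by substituting $B:=z_1A_1$: the matrix $z\bI+B+\sum_{i\ge 2}z_iA_i$ has entries affine in the entries of $B$, so its determinant is a polynomial of total degree $\le d$ in those entries, and resubstituting $B_{pq}=z_1(A_1)_{pq}$ shows that the coefficient of $z_1^k$ in $P$ is homogeneous of degree $k$ in the entries of $A_1$. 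Writing $P=\sum_{k\ge 0}z_1^kQ_k$, this means $Q_0$ is independent of $A_1$ and $Q_1$ is linear in $A_1$. Then $(1-\partial_{z_1})P\big|_{z_1=0}=Q_0-Q_1$ is affine in $A_1$, and applying the remaining operator $\prod_{i=2}^m(1-\partial_{z_i})$ followed by $z_2=\cdots=z_m=0$ is a fixed linear operation on the $z_2,\dots,z_m$-dependence that does not involve $A_1$, hence preserves affineness in $A_1$. So $\mu[A_1,\dots,A_m](z)$ is (a polynomial in $z$ free of $A_1$) minus (a polynomial in $z$ linear in $A_1$), which is the claim.

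The computation is essentially bookkeeping, and the only place deserving real attention is the homogeneity statement for the coefficient of $z_1^k$ — which is why I would isolate it through the substitution $B=z_1A_1$ rather than expanding the determinant cofactor by cofactor. I would also flag, to forestall a terminological trap, that one cannot strengthen "affine" to "linear": setting $A_1=0$ gives $Q_1=0$ and $\mu[0,A_2,\dots,A_m](z)=\mu[A_2,\dots,A_m](z)$, which is generally nonzero.
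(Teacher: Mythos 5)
Your proof is correct, and its core step takes a genuinely different route from the paper's. The paper likewise dismisses symmetry as immediate and reduces to affinity in $A_1$, but it establishes that $A_1 \mapsto (1-\partial_{z_1})\det(B+z_1A_1)\big|_{z_1=0}$ is affine via Jacobi's formula, computing it explicitly as $\det(B)\bigl(1-\tr(B^{-1}A_1)\bigr)$ for invertible $B$ and then handling arbitrary $B$ by density of invertible matrices and continuity. You instead write $\det\bigl(z\bI+\sum_{i} z_iA_i\bigr)=\sum_k z_1^k Q_k$ and observe, through the substitution $B=z_1A_1$, that $Q_k$ is homogeneous of degree $k$ in the entries of $A_1$, so that $(1-\partial_{z_1})(\cdot)\big|_{z_1=0}=Q_0-Q_1$ extracts exactly the constant and linear parts; this is pure degree counting and avoids both Jacobi's formula and the density/continuity limit (which the paper needs only because $B^{-1}$ appears in its formula). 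What the paper's computation buys is the explicit trace identity, in the same spirit as its later use of Jacobi's formula in the proof of $(MCP)$; for this lemma that extra precision is not needed, and your argument is arguably cleaner. Both proofs finish identically: applying the remaining operators $(1-\partial_{z_i})$, $i\ge 2$, and evaluating at $z_2=\cdots=z_m=0$ acts linearly on the $z_2,\ldots,z_m$-dependence without involving $A_1$, hence preserves affinity in $A_1$, and symmetry then upgrades this to multi-affinity. Your closing remark that affine cannot be strengthened to linear (via $A_1=0$) is a sound observation, though not required for the lemma.
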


\begin{proof}
The fact the $\mu$ is symmetric is immediate from the definition. We claim that for any $d\times d$ matrix $B$, a function 
\[
f: M_{d\times d}(\C) \to \C, \qquad f(A_1) = (1 - \partial_{z_1}) \det( B+ z_1 A_1)|_{z_1=0} \qquad\text{for } A_1 \in M_{d\times d}(\C)
\]
is affine. Indeed, if $B$ is invertible, then by Jacobi's formula
\[
f(A_1)= \det(B)  - \det( B) \partial_{z_1}\det(\mathbf I + z_1 B^{-1}A_1)|_{z_1=0} = \det(B)(1 - \tr(B^{-1}A_1)).
\]
Since invertible matrices are dense in the set of all matrices, by continuity we deduce the general case. Thus, for any choice of matrices $A_2, \ldots, A_m$, a mapping
\[
(M_{d\times d}(\C),\C^{m-1}) \ni (A_1, z_2,\ldots,z_m) \mapsto  (1 - \partial_{z_1})\det \bigg( z \mathbf I + \sum_{i=1}^m z_i A_i \bigg) \bigg|_{z_1=0}
\]
is affine in the $A_1$ variable and a polynomial of degree $\le d$ in $z_2,\ldots,z_m$ variables. Applying linear operators, such as partial differential operators with constant coefficients $(1-\partial_{z_i})$, $i=2,\ldots,m$, preserves this property. Consequently, the mapping $A_1 \mapsto \mu[A_1,\ldots,A_m](z)$ is affine. By symmetry, $\mu$ is multi-affine. \end{proof}

\begin{lemma}\label{mu}
If $A_1, \ldots, A_m$ are rank one $d\times d$ matrices, then the mixed characteristic polynomial is a characteristic polynomial of the sum $A=A_1+\ldots+A_m$,
\begin{equation}\label{mud}
\mu[A_1,\ldots,A_m](z) = \det (z \mathbf I - A) \qquad z\in \C.
\end{equation}
\end{lemma}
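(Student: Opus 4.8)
The plan is to reduce the whole statement to a single substitution. Write
\[
Q(z,z_1,\dots,z_m) := \det\Bigl( z\mathbf I + \sum_{i=1}^m z_i A_i \Bigr),
\]
so that by definition $\mu[A_1,\dots,A_m](z) = \bigl(\prod_{i=1}^m(1-\partial_{z_i})\bigr)Q\big|_{z_1=\dots=z_m=0}$. The crucial observation is that because each $A_i$ has rank one, $Q$ is \emph{multi-affine} in the variables $z_1,\dots,z_m$, i.e.\ of degree at most one in each $z_i$ separately. Granting this, I would invoke the elementary one-variable identity: if $p$ is affine in $z_i$, writing $p=\alpha+z_i\beta$ with $\alpha,\beta$ free of $z_i$ gives $(1-\partial_{z_i})p\big|_{z_i=0}=\alpha-\beta=p\big|_{z_i=-1}$. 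Iterating over $i=1,\dots,m$ — the operators $1-\partial_{z_i}$ and the evaluations at $0$ act on disjoint variables and hence commute, and each substitution $z_i=-1$ preserves affineness in the remaining variables — yields
\[
\mu[A_1,\dots,A_m](z) = Q(z,-1,\dots,-1) = \det\Bigl( z\mathbf I - \sum_{i=1}^m A_i \Bigr) = \det(z\mathbf I - A),
\]
which is exactly \eqref{mud}.

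So the only real content is the multi-affineness of $Q$. To see it, fix an index $i$ and freeze all the other variables, writing $z\mathbf I + \sum_j z_j A_j = C + z_i A_i$ where $C$ does not depend on $z_i$. Since $\operatorname{rank} A_i = 1$ we may write $A_i = ab^*$ for nonzero $a,b\in\C^d$, so the $k$-th column of $C + z_i A_i$ has the form $C_k + z_i c_k a$ for suitable scalars $c_k$. Expanding $\det(C+z_i A_i)$ by multilinearity in the columns produces a sum over all ways of selecting, in each column $k$, either $C_k$ or $z_i c_k a$; any selection that picks the vector $a$ in two or more columns contributes a determinant with two proportional columns, hence $0$. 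Therefore only selections using $a$ at most once survive, and each such surviving term is affine in $z_i$. Since $i$ was arbitrary, $Q$ is multi-affine in $z_1,\dots,z_m$.

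I do not expect a serious obstacle: once one notices that the rank-one hypothesis forces degree $\le 1$ in each $z_i$ (an observation in the same spirit as the Jacobi-formula computation in Lemma~\ref{mas}), the whole expression collapses to the substitution $z_1=\dots=z_m=-1$. The one place to be slightly careful is the bookkeeping in the iterated application of the operators $1-\partial_{z_i}$: one should check that substituting $z_i=-1$ keeps the polynomial affine in each untouched variable, so that the single-variable identity $(1-\partial_{z_i})p|_{z_i=0}=p|_{z_i=-1}$ may legitimately be applied one variable at a time.
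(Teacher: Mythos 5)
Your proposal is correct and takes essentially the same route as the paper: the rank-one hypothesis forces $\det\bigl(z\mathbf I+\sum_i z_iA_i\bigr)$ to be affine in each $z_i$, and then applying $\prod_i(1-\partial_{z_i})$ and evaluating at $z_1=\dots=z_m=0$ is exactly substitution of $z_i=-1$, giving $\det(z\mathbf I-A)$. The only minor difference is that you establish multi-affineness directly from column multilinearity of the determinant, while the paper deduces it from the Sylvester identity $\det(\mathbf I+t\,uv^*)=1+t\,v^*u$ together with a density-of-invertible-matrices argument; your variant is equally valid and even slightly more self-contained.
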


\begin{proof}
Any rank one matrix is of the from $u v^*$ for some $u,v \in \C^d$. By the Sylvester determinant identity $\det(\mathbf I + t uv^*)=1+t v^*u$ for any $t\in \C$. Hence, for any $d\times d$ matrix $B$ the mapping
\[
\C \ni t \mapsto \det (B+ t uv^*) = b_0+ b_1 t , \qquad b_0,b_1 \in \C,
\]
is affine. If $B$ is invertible, then this follows by factoring out $B$, which reduces to the case $B=\mathbf I$. Since invertible matrices are dense in the set of all matrices, by continuity we deduce the general case. This implies that for fixed $z\in \C$, the polynomial
\[
p(z_1,\ldots,z_m):=  \det \bigg( z \mathbf I + \sum_{i=1}^m z_i A_i \bigg)= b + \sum_{1 \le i_1< \ldots<i_j \le m} a_{i_1} \ldots a_{i_j}z_{i_1} \ldots z_{i_j}
\]
is affine multilinear in $z_1, \ldots, z_m$. Hence, we can recover values of $p$ by the following formula
\[
p(t_1,\ldots, t_m) =  \bigg(\prod_{i=1}^m (1 + t_i \partial_{z_i}) \bigg)p(z_1,\ldots,z_m) \bigg|_{z_1=\ldots=z_m=0}.
\]
Taking $t_1=\ldots=t_m=-1$ yields
\[
\mu[A_1,\ldots,A_m](z) = p(-1,\ldots,-1)= \det (z \mathbf I - A).
\]
\end{proof}

\begin{lemma}\label{mux}
Let $X_1,\ldots ,X_m$ be $d\times d$ jointly independent random matrices, which take finitely many values. Then,
\begin{equation}\label{mux1}
\E{}{\mu[X_1,\ldots, X_m](z)} = \mu[\E{}{X_1}, \ldots, \E{}{X_m}](z) \qquad z\in\C.
\end{equation}
In addition, if random matrices $X_i$, $i=1,\ldots,m$, are rank one, then
\begin{equation}\label{mux2}
\E{}{\det\bigg(z\mathbf I -\sum_{i=1}^m X_i\bigg)} = \mu[\E{}{X_1}, \ldots, \E{}{X_m}](z) \qquad z\in\C.
\end{equation}
\end{lemma}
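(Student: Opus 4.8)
The plan is to derive \eqref{mux1} directly from the multi-affine character of $\mu$ established in Lemma \ref{mas}, together with the hypothesis that each $X_i$ takes only finitely many values (so every expectation is a finite sum and integrability is not an issue). The underlying elementary fact is that an affine map $f$ on a vector space has the form $f(x)=L(x)+c$ with $L$ linear, whence $\E{}{f(Y)}=f(\E{}{Y})$ for any finitely-valued random vector $Y$; the multi-affine case is obtained by applying this slot by slot.

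First I would fix $z\in\C$ and, using joint independence, expand the left side of \eqref{mux1} as the finite sum
\[
\E{}{\mu[X_1,\ldots,X_m](z)} = \sum_{a_1,\ldots,a_m} \PP(X_1=a_1)\cdots\PP(X_m=a_m)\,\mu[a_1,\ldots,a_m](z),
\]
where each $a_i$ ranges over the (finite) set of values of $X_i$. Then I would peel off the variables one at a time. Summing over $a_1$ first and using that $A_1\mapsto\mu[A_1,a_2,\ldots,a_m](z)$ is affine (Lemma \ref{mas}), together with $\sum_{a_1}\PP(X_1=a_1)=1$,
\[
\sum_{a_1}\PP(X_1=a_1)\,\mu[a_1,a_2,\ldots,a_m](z) = \mu\Big[\textstyle\sum_{a_1}\PP(X_1=a_1)\,a_1,\ a_2,\ldots,a_m\Big](z) = \mu[\E{}{X_1},a_2,\ldots,a_m](z).
\]
After this step the remaining object is still multi-affine in $a_2,\ldots,a_m$, so iterating the same move over $a_2,\ldots,a_m$ collapses the sum to $\mu[\E{}{X_1},\ldots,\E{}{X_m}](z)$, which is \eqref{mux1}. (Equivalently one may phrase the iteration via conditional expectations, replacing $X_i$ by $\E{}{X_i}$ one index at a time by independence.)

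For \eqref{mux2} I would invoke Lemma \ref{mu}: when the $X_i$ are rank one, every value each $X_i$ assumes is a rank one matrix, so for each outcome we have the identity of polynomials $\mu[X_1,\ldots,X_m](z)=\det\big(z\mathbf I-\sum_{i=1}^m X_i\big)$. Taking expectations of both sides and applying \eqref{mux1} yields \eqref{mux2}. Note that one neither needs nor is allowed to apply Lemma \ref{mu} on the right-hand side, since the averaged matrices $\E{}{X_i}$ need not be rank one.

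The only point requiring any care is the bookkeeping in the peeling step — checking that multi-affineness in the remaining slots persists after summing out each $a_i$, which is immediate since convex combinations of matrices stay in the matrix space and $\mu[\,\cdot\,,\ldots,\,\cdot\,](z)$ is multi-affine by Lemma \ref{mas}. Beyond that the argument is purely formal, and \eqref{mux2} is a one-line consequence of \eqref{mux1} and Lemma \ref{mu}.
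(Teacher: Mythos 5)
Your argument is correct and matches the paper's proof: both use the multi-affine property from Lemma \ref{mas} to pass expectations through each slot one at a time (joint independence justifying the product form of the expectation), and then obtain \eqref{mux2} by applying Lemma \ref{mu} pointwise to the rank-one values on the left side before taking expectations. Your version merely spells out the peeling/iteration step that the paper leaves implicit, so no further changes are needed.
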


\begin{proof}
By Lemma \ref{mas} for any matrices $B_1,\ldots,B_n$ and $A_2,\ldots,A_m$ and coefficients $p_1,\ldots,p_n$ satisfying $\sum_{i=1}^n p_i=1$, we have
\[
\mu\bigg[\sum_{i=1}^n p_i B_i,A_2, \ldots, A_m\bigg ](z) = \sum_{i=1}^n p_i \mu[B_i,A_2,\ldots,A_m].
\]
Then the joint independence of $X_1,\ldots ,X_m$ yields \eqref{mux1}. Combining \eqref{mux1} with Lemma \ref{mu} yields \eqref{mux2}.
\end{proof}

\subsection{Real stable polynomials}
The proof of $(MCP) \implies (MSS)$ relies on the concept of a real stable polynomial.

\begin{definition}
Let $\C_+ = \{ z \in \C : \Im(z) > 0 \}$ be the upper half plane. 
We say that a polynomial $p \in \C[z_1, \dots, z_m]$ is {\em stable} if $p(z_1,\ldots,z_m) \neq 0$ for every $(z_1,\ldots,z_m) \in \C_+^m$.
A polynomial is called {\em real stable} if it is stable and all of its coefficients are real.
\end{definition}

Note that a univariate polynomial is real stable if and only if all its roots are real. We will show a few basic properties of real stable polynomials.

\begin{lemma}\label{rs}
If $A_1,\ldots,A_m$ are positive semidefinite hermitian $d\times d$ matrices, then
\[
p(z,z_1,\ldots,z_m) = \det \bigg( z \mathbf I + \sum_{i=1}^m z_i A_i \bigg) \in \C[z,z_1,\ldots,z_m]
\]
is a real stable polynomial.
\end{lemma}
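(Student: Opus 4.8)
The plan is to reduce the multivariate claim to a classical one-variable fact, namely that a polynomial of the form $\det(B + tC)$ with $B,C$ Hermitian and $C$ positive semidefinite has only real roots in $t$, via the standard device of restricting $p$ to complex lines and using that real-rootedness of all such restrictions characterizes real stability in the following sense: a polynomial with real coefficients is real stable if and only if for every $\mathbf{x}\in\R^m$ and every $\mathbf{v}\in\R_{>0}^m$ (or more conveniently $\mathbf{v}$ with all positive entries), the univariate polynomial $t\mapsto p(\mathbf{x}+t\mathbf{v})$ has only real roots. So the first step is to record that the coefficients of $p$ are real: expanding the determinant, each monomial in $z,z_1,\ldots,z_m$ has coefficient a sum of products of minors of the real-entried... wait, the $A_i$ need not be real, only Hermitian. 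The correct statement is that $\det(z\mathbf{I}+\sum z_i A_i)$ is a polynomial with real coefficients when the $A_i$ are Hermitian, because the matrix $z\mathbf{I}+\sum z_i A_i$ is Hermitian for real $(z,z_1,\ldots,z_m)$ and the determinant of a Hermitian matrix is real; since a polynomial taking real values on $\R^{m+1}$ has real coefficients, we are done with that part.

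The heart of the matter is stability. Suppose toward a contradiction that $p(w,w_1,\ldots,w_m)=0$ for some point in $\C_+^{m+1}$. Write $w=x+ \tau y$-style: set $w_0 = w$, $A_0 = \mathbf{I}$, so we must show $\det(\sum_{i=0}^m w_i A_i)\neq 0$ whenever all $w_i\in\C_+$. Write $w_i = a_i + b_i\sqrt{-1}$ with $b_i>0$ for each $i=0,\ldots,m$. Then
\[
M := \sum_{i=0}^m w_i A_i = \underbrace{\sum_{i=0}^m a_i A_i}_{=:H} + \sqrt{-1}\underbrace{\sum_{i=0}^m b_i A_i}_{=:K},
\]
where $H$ is Hermitian and $K$ is a positive semidefinite Hermitian matrix that is in fact positive definite because $b_0>0$ and $A_0=\mathbf{I}$ (so $K\geq b_0\mathbf{I}>0$). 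It remains to show that $H + \sqrt{-1}K$ is invertible. The key step is the change of basis: since $K$ is positive definite we may write $K = S S^*$ for an invertible $S$ (e.g. $S = K^{1/2}$), and then $H+\sqrt{-1}K = S(S^{-1}HS^{-*} + \sqrt{-1}\,\mathbf{I})S^*$. The middle matrix $S^{-1}HS^{-*}$ is Hermitian, hence has only real eigenvalues, so adding $\sqrt{-1}\,\mathbf{I}$ produces a matrix all of whose eigenvalues have imaginary part $1\neq 0$; in particular it is invertible, and therefore so is $M$. This gives $\det M\neq 0$, contradicting the assumed zero, and establishes stability.

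Assembling: the argument shows $p(w_1,\ldots,w_m,w)\neq 0$ for every tuple in $\C_+^{m+1}$ (after the harmless renaming), so $p$ is stable; combined with the reality of its coefficients, $p$ is real stable. I expect the main obstacle, such as it is, to be purely expository: making sure the positive definiteness of the ``imaginary part'' $K$ is correctly extracted — it is crucial that the coefficient $z$ of the identity matrix ranges over $\C_+$ as well, so that $\Im(z)>0$ forces $K\succeq \Im(z)\mathbf{I}\succ 0$; if one only knew $K\succeq 0$ the factorization trick would fail and one would need a limiting or perturbation argument instead. A secondary point to handle cleanly is the reduction lemma that real-rootedness on lines (or, as used here directly, non-vanishing on $\C_+^m$) is the right notion — but since we verify non-vanishing on $\C_+^{m+1}$ head-on, no such reduction is actually needed, and the proof is essentially the two displays above plus the remark about real coefficients.
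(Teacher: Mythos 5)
Your proof is correct and takes essentially the same approach as the paper: real coefficients follow because the determinant of a Hermitian matrix is real, and stability is verified directly by showing that $z\mathbf I+\sum_{i=1}^m z_iA_i$ is invertible whenever all arguments lie in $\C_+$, the key point in both arguments being that the imaginary part of this matrix is positive definite thanks to the $\Im(z)\mathbf I$ term. The only difference is the final linear-algebra step: the paper applies a putative null vector $v$ and observes $\Im\langle (z\mathbf I+\sum_i z_iA_i)v,v\rangle \ge \Im(z)\|v\|^2>0$, whereas you factor out $K^{1/2}$ and note that a Hermitian matrix plus $i\mathbf I$ is invertible; both finishes are valid.
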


\begin{proof}
If inputs $z,z_1,\ldots, z_m$ are real, then the values $p(z,z_1,\ldots,z_m)$ are also real, since  a determinant of a hermitian matrix is real. Hence, $p\in \R[z,z_1,\ldots,z_m]$. On the contrary suppose that $p(z,z_1,\ldots,z_m)=0$ for some $(z,z_1,\ldots,z_m) \in \C_+^{m+1}$. That is,
\[
\bigg( z \mathbf I + \sum_{i=1}^m z_i A_i \bigg)v=0 \qquad\text{for some }0 \ne v\in \C^{m+1}.
\]
Hence,
\[
0= \Im \bigg\lan \bigg( z \mathbf I + \sum_{i=1}^m z_i A_i \bigg)v,v \bigg \rangle = \Im(z) ||v||^2 + \sum_{i=1}^m \Im(z_i) \lan A_i v, v \ran \ge \Im(z) ||v||^2 >0,
\]
which is a contradiction.
\end{proof}

\begin{lemma}\label{rd}
 Suppose that $p\in \R[z_1,\ldots,z_m]$ is stable.
\begin{itemize}
\item
{\bf (restriction)} for fixed $t\in\R$, polynomial $p(t,z_2,\ldots,z_{m}) \in \R[z_2,\ldots,z_m]$ is stable unless it is identically zero.
\item {\bf (differentiation)} if $t\in \R$, then $(1+t\partial_{z_1})p $ is real stable.
\end {itemize}
\end{lemma}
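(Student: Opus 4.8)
\textbf{Proof plan for Lemma \ref{rd}.}

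\emph{Restriction.} The plan is to argue by contradiction. Suppose $p(t, z_2, \ldots, z_m)$ is not identically zero but fails to be stable, so there exist $(w_2, \ldots, w_m) \in \C_+^{m-1}$ with $p(t, w_2, \ldots, w_m) = 0$. The idea is to perturb the first coordinate slightly into the upper half plane: for small $\eta > 0$, consider $q_\eta(z_2, \ldots, z_m) := p(t + i\eta, z_2, \ldots, z_m)$. As $\eta \to 0^+$ these polynomials converge (coefficientwise, hence locally uniformly) to $p(t, \cdot)$, which is not identically zero and has an isolated zero at $(w_2, \ldots, w_m)$. By Hurwitz's theorem applied on a small polydisc around $(w_2, \ldots, w_m)$ contained in $\C_+^{m-1}$, for all sufficiently small $\eta$ the polynomial $q_\eta$ must also vanish somewhere in that polydisc. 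But $(t + i\eta, \text{that point}) \in \C_+^m$, contradicting stability of $p$. A small care point: one must ensure the limit polynomial is not identically zero on the relevant polydisc, which is exactly the excluded degenerate case, so Hurwitz applies cleanly.

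\emph{Differentiation.} Here the plan is to reduce to a one-variable statement via the Hermite--Biehler / Hermite--Kakeya type argument. Fix arbitrary $(z_2, \ldots, z_m) \in \C_+^{m-1}$ and view $g(z_1) := p(z_1, z_2, \ldots, z_m)$ as a univariate polynomial; by stability of $p$, $g$ has no zeros in $\C_+$, i.e. all its zeros lie in the closed lower half plane $\overline{\C_-}$. I want to show $(1 + t\partial_{z_1})g$ also has no zeros in $\C_+$. Write $g(z_1) = c \prod_k (z_1 - \lambda_k)$ with $\Im \lambda_k \le 0$. If $g(w) = 0$ for some $w$ we are fine (such $w \notin \C_+$); otherwise at a zero $w$ of $(1 + t\partial_z)g$ we have $1 + t \frac{g'(w)}{g(w)} = 0$, i.e. $\frac{g'(w)}{g(w)} = \sum_k \frac{1}{w - \lambda_k} = -1/t$. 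The key computation: if $w \in \C_+$ then each $w - \lambda_k \in \C_+$ (since $-\lambda_k$ has nonnegative imaginary part), so $\frac{1}{w - \lambda_k}$ has strictly negative imaginary part, whence $\Im \sum_k \frac{1}{w-\lambda_k} < 0$; but $-1/t$ is real, a contradiction. Finally, since this holds for every fixed tuple in $\C_+^{m-1}$, and $(1+t\partial_{z_1})p$ has real coefficients (as $t \in \R$ and $p$ does), it is real stable. One subtlety to record: this argument handles $t \neq 0$; the case $t = 0$ is trivial since then $(1+t\partial_{z_1})p = p$.

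\emph{Main obstacle.} Neither part is deep, but the delicate point is the degenerate case in the restriction statement — one genuinely needs the hypothesis that $p(t, \cdot)$ is not identically zero for Hurwitz to apply, and it is worth noting in the write-up why the conclusion can indeed fail without it (e.g. $p(z_1,z_2) = z_1$ restricted at $t=0$). For the differentiation part the only thing to be careful about is the bookkeeping of which half-plane the roots $\lambda_k$ lie in and the sign of the imaginary part of $1/(w-\lambda_k)$; everything else is a direct consequence of stability plus the partial-fraction identity for $g'/g$.
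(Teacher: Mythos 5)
Your proposal is correct and takes essentially the same approach as the paper: the restriction part is the same Hurwitz limiting argument with $p(t+i\eta,\cdot)\to p(t,\cdot)$, and the differentiation part is the same factorization of the $z_1$-slice $g(z_1)=c\prod_k(z_1-\lambda_k)$ with $\Im\lambda_k\le 0$ and the observation that $\sum_k 1/(w-\lambda_k)$ has strictly negative imaginary part for $w\in\C_+$, so it cannot equal the real number $-1/t$. The only cosmetic point is that for $m-1\ge 2$ zeros of holomorphic functions are never isolated, so one should invoke the several-variable form of Hurwitz's theorem (a locally uniform limit of nonvanishing functions on a connected domain is either nonvanishing or identically zero), exactly as the paper does, rather than isolation of the zero.
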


\begin{proof} By Hurwitz's theorem, if a sequence of non-vanishing holomorphic functions $\{f_n\}_{n\in\N}$ on an open connected domain $\Omega \subset \C^m$ converges uniformly on compact sets, then its limit $f$ is either non-vanishing or $f \equiv 0$. Let $\Omega = \C_+^{m-1}$. Define
\[
f_n(z_1,\ldots,z_{m-1}) = p(t+i/n,z_1,\ldots,z_{m-1}) \qquad\text{for }(z_1,\ldots, z_{m-1}) \in \Omega, \ n\in\N.
\]
Letting $n\to \infty$, Hurwitz's theorem implies the restriction property.

To show differentiation property we can assume that $t \ne 0$. Fix $z_2,\ldots,z_m \in \Omega$. By definition $q(z)=p(z,z_2,\ldots,z_m) \in \C[z]$ is stable.  Hence, we can write $q(z)= c \prod_{i=1}^d (z-w_i)$ for some roots $w_1,\ldots,w_d \in \C \setminus \C_+$. Then, 
\[
q(z)+t q'(z) = c \prod_{i=1}^d (z-w_i) \bigg( 1+ \sum_{i=1}^d \frac{t}{z-w_i} \bigg).
\]
Take any $z\in \C_+$. Since $\Im(w_i) \le 0$, we have $z-w_i \in \C_+$, and hence $\Im (1/(z-w_i))<0$ for all $i=1,\ldots,d$. Hence, $\sum_{i=1}^d \frac{t}{z-w_i}$ has non-zero imaginary part. This implies that $q(z)+tq'(z) \ne 0$ for any $z\in \C_+$. Since $z_2,\ldots,z_m \in \Omega$ is arbitrary, $(1+t\partial_{z_1})p $ is stable.
\end{proof}

As a corollary of Lemma \ref{rs} and \ref{rd} we have:

\begin{corollary}\label{crs}
If $A_1,\ldots,A_m$ are positive semidefinite hermitian $d\times d$ matrices, then the mixed characteristic polynomial $\mu[A_1,\ldots,A_m]$ is real, stable, and monic of degree $d$.
\end{corollary}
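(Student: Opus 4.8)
The plan is to realize $\mu[A_1,\ldots,A_m]$ as the result of applying to the polynomial
\[
p(z,z_1,\ldots,z_m) \;=\; \det\!\Big( z\mathbf I + \sum_{i=1}^m z_i A_i \Big)
\]
first $m$ differentiation steps and then $m$ restriction steps, invoking Lemma~\ref{rs} and Lemma~\ref{rd} at each stage. By Lemma~\ref{rs}, $p$ is real stable. The differentiation part of Lemma~\ref{rd}, applied with $t=-1$ to the variable $z_1$, shows $(1-\partial_{z_1})p$ is real stable; since real stability does not depend on how the variables are labeled, iterating over $z_2,\ldots,z_m$ yields that $q:=\big(\prod_{i=1}^m(1-\partial_{z_i})\big)p$ is real stable in $\C[z,z_1,\ldots,z_m]$.

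Next I would specialize $q$ at $z_1=\cdots=z_m=0$, one variable at a time, using the restriction part of Lemma~\ref{rd} with the real value $t=0$ (again relabeling variables so that the one being set to $0$ plays the role of the first one). Each such restriction is stable unless it is identically zero, and evaluating at a real point keeps all coefficients real, so the only thing to verify is that no intermediate specialization collapses to the zero polynomial.

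To settle this I would inspect the top-degree term in $z$. The determinant $p$ has degree $\le d$ in $z$, and the coefficient of $z^d$ in its expansion equals $1$: the only way to obtain $z^d$ is to multiply together the $d$ diagonal entries $z$, a contribution that carries no dependence on $z_1,\ldots,z_m$. The operator $\prod_{i=1}^m(1-\partial_{z_i})$ leaves this coefficient intact, and setting $z_1=\cdots=z_m=0$ does not change it either, so $\mu[A_1,\ldots,A_m]=q(z,0,\ldots,0)$ is monic of degree $d$, hence not identically zero. Since restricting the zero polynomial produces the zero polynomial, a backward induction over the $m$ specialization steps shows that none of the intermediate polynomials vanishes identically; therefore each of them, and in particular $\mu[A_1,\ldots,A_m]$, is real stable. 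This establishes all three assertions at once: real coefficients, stability, and monic of degree $d$. The one delicate point is staying inside the nonzero branch of the restriction property of Lemma~\ref{rd}, which is exactly why the monic-of-degree-$d$ computation is carried out first and then fed into the backward induction.
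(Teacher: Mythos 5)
Your argument is correct and is exactly the route the paper intends: the paper states this as an immediate consequence of Lemma~\ref{rs} (real stability of the determinantal polynomial) and Lemma~\ref{rd} (differentiation with $t=-1$ and restriction to real values), which is precisely what you carry out. Your additional care about monicity in $z$ guaranteeing that no intermediate restriction is identically zero is a worthwhile detail that the paper leaves implicit.
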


The following elementary lemma plays a key role in our arguments. Recall that for any $p\in \R[z]$, $p$ is stable $\iff$ $p$ has all real roots. Let $\maxroot(p)$ be the largest root of $p$.

\begin{lemma}\label{el}
Let $p, q \in \R[z]$ be stable monic polynomials of the same degree. Suppose that every convex combination $(1-t)p+tq$, $0\le t \le 1$, is also stable. Then for any $0\le t_0 \le 1$, $\maxroot(((1-t_0)p+t_0q)$ lies between $\maxroot(p)$ and $\maxroot(q)$.
\end{lemma}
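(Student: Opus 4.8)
The plan is to argue by contradiction using the fact that a monic real-rooted polynomial is positive to the right of its largest root. Write $p_t := (1-t)p + tq$ and let $\rho(t) := \maxroot(p_t)$; by hypothesis each $p_t$ is stable, hence real-rooted, so $\rho(t)$ is well-defined for all $t \in [0,1]$. Set $M := \max(\maxroot(p), \maxroot(q))$. Since $p$ and $q$ are monic of the same degree $d$, for every real $z > M$ we have $p(z) > 0$ and $q(z) > 0$ (a monic real-rooted polynomial is strictly positive beyond its largest root), hence $p_t(z) = (1-t)p(z) + tq(z) > 0$ for every $t \in [0,1]$. This already shows $\rho(t) \le M = \max(\maxroot(p),\maxroot(q))$ for all $t$, which is the easy half of the claim.

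For the lower bound, set $m := \min(\maxroot(p),\maxroot(q))$; I must show $\rho(t_0) \ge m$. Suppose instead $\rho(t_0) < m$. Then $p_{t_0}(z) > 0$ for all real $z \ge m$ (again using that $p_{t_0}$ is monic, real-rooted, with largest root $< m$), and in particular $p_{t_0}(m) > 0$. On the other hand, one of $p$, $q$ — say $p$ — satisfies $\maxroot(p) = m$, so $p(m) = 0$, while $q(m) \ge 0$ because $q$ is monic real-rooted and $m \le \maxroot(q)$, so $q(m) \ge 0$. Hence $p_{t_0}(m) = (1-t_0)p(m) + t_0 q(m) = t_0 q(m) \ge 0$, which is consistent and gives no contradiction directly; the subtlety is that $q(m)$ could be strictly positive. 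So I need a sharper argument.

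The clean way is to track the largest root as a continuous function of $t$. Because each $p_t$ is monic of fixed degree $d$ with all roots real, the roots depend continuously on $t$ (they are the roots of a monic polynomial whose coefficients are affine, hence continuous, in $t$), and in particular $\rho(t) = \maxroot(p_t)$ is a continuous function on $[0,1]$: indeed $\rho(t)$ is the largest element of the root multiset, and the root multiset varies continuously in the Hausdorff sense, so its maximum does too. Now $\rho(0) = \maxroot(p)$ and $\rho(1) = \maxroot(q)$. By the intermediate value theorem applied to the continuous function $\rho$ on $[0, t_0]$ and on $[t_0, 1]$, $\rho(t_0)$ lies between $\rho(0)$ and $\rho(1)$ — wait, that is not automatic from continuity alone. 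So instead I argue: we have already shown $\rho(t) \le M$ for all $t$. Suppose $\rho(t_0) < m$. Consider the set $S = \{t \in [0,1] : \rho(t) < m\}$; it is open in $[0,1]$ by continuity of $\rho$, contains $t_0$, and does not contain $0$ or $1$ (since $\rho(0)$ or $\rho(1)$ equals $m$). Hence the connected component of $t_0$ in $S$ is an interval $(a,b)$ with $0 \le a < t_0 < b \le 1$ and $\rho(a) = \rho(b) = m$ (by continuity and maximality of the component, $a, b \notin S$). Now on $(a,b)$ we have $p_t(z) > 0$ for all real $z \ge m$. Fix any $z_0 > m$; then $t \mapsto p_t(z_0) = (1-t)p(z_0) + t q(z_0)$ is affine in $t$ and positive on the closed interval $[a,b]$, which is fine. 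The contradiction must come from evaluating at $z = m$: $p_a(m) = 0$ forces $(1-a)p(m) + a q(m) = 0$; since $p(m), q(m) \ge 0$ (both monic real-rooted with $m \le \maxroot$), and a nonnegative combination vanishes, we get $p(m) = q(m) = 0$ whenever $0 < a < 1$; similarly from $p_b(m) = 0$. But if $a = 0$ then $p(m) = p_0(m) = 0$ forces $\maxroot(p) \ge m$, consistent; combined with $\rho(t) < m$ just to the right of $a=0$ we need $\rho$ to jump, contradicting continuity unless $p(m)=0$ is a root of multiplicity allowing...

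\textbf{Cleaner final argument.} Drop the case analysis: I will show directly that $\rho$ is continuous and that $\rho(t) \in [\,m, M\,]$ fails only if $\rho$ leaves $[m,M]$, which continuity plus the endpoint values prohibits. Precisely: $\rho$ is continuous on $[0,1]$, $\rho(0)=\maxroot(p)\in[m,M]$, $\rho(1)=\maxroot(q)\in[m,M]$. Suppose $\rho(t_0) \notin [m,M]$; we showed $\rho(t_0)\le M$, so $\rho(t_0) < m$. Let $t_* = \inf\{t \ge t_0 : \rho(t) \ge m\}$; this is well-defined since $\rho(1) \ge m$, and $t_* > t_0$ by continuity, with $\rho(t_*) = m$. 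For $t \in [t_0, t_*)$ we have $\rho(t) < m$, so $p_t(z) > 0$ for every real $z \ge m$. By continuity in $t$, $p_{t_*}(z) \ge 0$ for every real $z \ge m$; but $\maxroot(p_{t_*}) = m$ means $m$ is a root, so $p_{t_*}(z) = (z-m)^k g(z)$ with $g(m) \ne 0$, $g$ real-rooted with all roots $< m$, hence $g > 0$ on $[m,\infty)$; thus $p_{t_*}$ changes sign at $m$ if $k$ is odd, contradicting $p_{t_*} \ge 0$ near $m$ from the right and ... this still needs $k$ odd.

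The honest resolution is that this lemma is genuinely about interlacing-type continuity and the right proof is the topological one: \emph{the image of the connected set $[0,1]$ under the continuous map $\rho$ is a connected subset of $\R$, i.e. an interval, containing $\rho(0)$ and $\rho(1)$; hence it contains every point between them, and in particular, combined with $\rho(t_0) \le M$, forces $\rho(t_0)$ into the interval spanned by $\rho(0)$ and $\rho(1)$ only once we also bound below.} Since that is circular, \textbf{the main obstacle} is precisely proving the lower bound $\rho(t_0) \ge \min(\maxroot p, \maxroot q)$, and I expect the author handles it by the following slick move, which I would adopt: among $p, q$ pick the one with the smaller largest root, say $\maxroot(p) = m \le \maxroot(q)$; then $p_t(z) = (1-t)p(z) + tq(z)$, and for $z < m$ sufficiently close to $m$ one uses that $p$ and $q$ can be compared via their factorizations — but more efficiently, use that $(1-t_0)p + t_0 q$ and $q$ themselves have a common convex combination, or invoke stability of the whole family to say: if $\rho(t_0) < m \le \maxroot(q)$, then on the segment from $p_{t_0}$ to $q$ (a sub-segment of the original, still all stable), the largest root is continuous, starts below $m$, ends at $\maxroot(q) \ge m$, so crosses $m$; at the crossing parameter $t_1$, $p_{t_1}(m) = 0$ and $p_{t_1}$ has no root $> m$, but $p_{t_1}$ is a \emph{strict} convex combination of $p_{t_0}$ (positive at $m$, since $\rho(t_0)<m$) and $q$ (nonnegative at $m$), hence $p_{t_1}(m) > 0$ — contradiction. \textbf{This is the argument I would write up:} reduce to the segment between whichever endpoint has the smaller largest root and the target point, use continuity of the largest root along that all-stable segment, and get the strict-positivity contradiction at $z = m$ from the fact that a strict convex combination of a value that is $> 0$ and a value that is $\ge 0$ is $> 0$. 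The upper bound is the symmetric (and easier) version of the same contradiction at $z = M$.
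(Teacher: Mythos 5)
Your upper bound is fine and matches the paper. For the lower bound, your final ``cleaner'' argument (continuity of $t\mapsto\maxroot(p_t)$ along the all-stable segment from $p_{t_0}$ to $q$, a crossing parameter $t_1$ with $p_{t_1}(m)=0$, and a sign contradiction at $z=m$) is a workable route, but as written it has a genuine gap: the claim that $q(m)\ge 0$ ``because $q$ is monic real-rooted and $m\le\maxroot(q)$'' is false in general -- a real-rooted monic polynomial has no preferred sign below its largest root. For instance take $p(z)=(z-1)^2$ and $q(z)=z(z-2)$, so $m=\maxroot(p)=1\le 2=\maxroot(q)$ and every combination $(1-t)p+tq=z^2-2z+(1-t)$ is real-rooted, yet $q(m)=q(1)=-1<0$. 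So the lemma's hypotheses alone do not give you $q(m)\ge0$, and your final contradiction (``a strict convex combination of a positive and a nonnegative value is positive'') rests on an unproved, generally false statement. The same step also silently excludes the case $\maxroot(q)=m$, where the crossing can only occur at $t_1=1$, the combination degenerates to $q$ itself, and the strictness you invoke is unavailable.

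Both defects are repaired by an observation you made in an earlier, abandoned paragraph but dropped from the final argument: $p(m)=0$, hence $p_t(m)=(1-t)p(m)+t\,q(m)=t\,q(m)$ for every $t$. Under the reductio hypothesis $\maxroot(p_{t_0})<m$ you have $p_{t_0}(m)>0$, so $t_0\,q(m)>0$, which forces $q(m)>0$; this is the correct source of the sign you wanted, and it also disposes of the case $\maxroot(q)=m$, since then $q(m)=0$. Then $p_t(m)=t\,q(m)>0$ for all $t\in[t_0,1]$, while continuity of the largest root together with $\maxroot(p_{t_0})<m<\maxroot(q)$ produces some $t_1$ with $p_{t_1}(m)=0$ -- a contradiction. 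With this fix your proof is correct and genuinely different in presentation from the paper's: the paper likewise starts from $q(m_p)>0$ (derived from exactly the identity $p(m_p)=0$), but then counts at least two roots of $q$ to the right of $m_p$ and applies Rouch\'e's theorem on the disk with diameter $[m_p,m_q]$ to the family $(1-t)p+tq$, $t\in[t_0,1]$. Your route replaces Rouch\'e by the continuity of $\maxroot(p_t)$ in $t$, which is legitimate but is itself a Rouch\'e-type fact about continuity of zeros, so it is not more elementary -- just a cleaner packaging, provided you cite or prove that continuity and excise the several abandoned attempts preceding the final argument.
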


\begin{proof}
Without loss of generality we can assume  $\maxroot(p) \le \maxroot(q)$ and $0<t_0<1$. Our goal is to show that
\[
m_p:=\maxroot(p) \le \maxroot(((1-t_0)p+t_0q) \le \maxroot(q)=:m_q.
\]
For $x>m_q$, both $p(x)$ and $q(x)$ are positive, and hence $((1-t_0)p+t_0q)(x)>0$. This shows the second inequality.

We shall prove the first inequality by contradiction. Suppose that $ (1-t_0)p+t_0q$ has no roots $[m_p,m_q]$. This implies that $  (1-t_0)p+t_0q>0$ for all $x\ge m_p$. In particular, $q(m_p)>0$. Hence, $q$ must have at least $2$ roots (counting multiplicity) to the right of $m_p$. 
Let $D$ be an open disk in $\C$ centered at $\frac{m_p+m_q}2$ and radius $\frac{m_q-m_p}2$. We claim that
\[
((1-t)p+ t q)(z) \ne 0 \qquad \text{for all }z\in \partial D \text{ and }t_0 \le t \le 1.
\]
Indeed, since $(1-t)p+tq$ is stable, this is easily verified at $z=m_p$ and $z=m_q$. By compactness
\[
\inf_{(z,t) \in \partial D \times [t_0,1]} |((1-t)p+ t q)(z)|>0.
\]
By Rouche's theorem, polynomials $(1-t)p+tq$ have the same number of zeros in $D$ for all $t_0\le t\le 1$. This is a contradiction with the hypothesis that $ (1-t_0)p+t_0q$ has no roots in $D$, but $q$ has at least 2 roots in $D$.
\end{proof}

Lemma \ref{el} can be generalized to control other roots, such as second largest, third largest, etc. This leads to a concept of an interlacing family of polynomials, which plays a fundamental role in the arguments of Marcus-Spielman-Srivastava \cite{MSS0, MSS}. 

\subsection{Interlacing family of polynomials}  We shall not give the formal definition of this concept. Instead, following Tao \cite{Tao} we will use the following lemma.

\begin{lemma}\label{ell}
Let $p_1,\ldots, p_n \in \R[z]$ be stable monic polynomials of the same degree. Suppose that every convex combination \[
\sum_{i=1}^n t_i p_i,\qquad{where } \ \sum_{i=1}^n t_i =1,\ t_i\ge 0
\]
is  a stable polynomial. Then, for any such convex combination there exists $1\le i_0 \le n$ such that
\begin{equation}\label{ell1}
\maxroot(p_{i_0}) \le \maxroot\bigg(\sum_{i=1}^n t_i p_i \bigg).
\end{equation}
\end{lemma}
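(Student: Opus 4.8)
The plan is to reduce the $n$-term statement to the two-term case, Lemma \ref{el}, by an averaging-and-induction argument on $n$. The base case $n=1$ is trivial, and $n=2$ is precisely Lemma \ref{el}. For the inductive step, suppose the statement holds for $n-1$ polynomials and we are given $p_1,\ldots,p_n$ and a convex combination $q = \sum_{i=1}^n t_i p_i$ with $\sum t_i = 1$, $t_i \ge 0$. If $t_n = 1$ (or more generally some $t_i=1$) we are done immediately, so assume $t_n < 1$. Write $q = (1-t_n) r + t_n p_n$, where $r = \sum_{i=1}^{n-1} \frac{t_i}{1-t_n} p_i$ is a convex combination of $p_1,\ldots,p_{n-1}$. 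The first thing to check is that $r$ is stable: but $r$ itself is a convex combination of the original $p_i$ (with weights $\frac{t_i}{1-t_n}$ for $i<n$ and weight $0$ on $p_n$, summing to $1$), so by hypothesis $r$ is stable, and it is monic of the same degree since each $p_i$ is.

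Next I would apply Lemma \ref{el} to the pair $r, p_n$. To do so I must verify that every convex combination $(1-s) r + s p_n$, $0 \le s \le 1$, is stable; but again each such polynomial is a convex combination of $p_1,\ldots,p_n$ with nonnegative weights summing to $1$, hence stable by hypothesis. Lemma \ref{el} then gives that $\maxroot(q) = \maxroot((1-t_n) r + t_n p_n)$ lies between $\maxroot(r)$ and $\maxroot(p_n)$; in particular
\[
\maxroot(q) \ge \min\big(\maxroot(r),\ \maxroot(p_n)\big).
\]
If the minimum is $\maxroot(p_n)$, take $i_0 = n$ and we are done. Otherwise $\maxroot(q) \ge \maxroot(r)$, and now I invoke the inductive hypothesis applied to $p_1,\ldots,p_{n-1}$ and the convex combination $r$: it yields some $i_0 \in \{1,\ldots,n-1\}$ with $\maxroot(p_{i_0}) \le \maxroot(r) \le \maxroot(q)$, which is \eqref{ell1}. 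This closes the induction.

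I do not anticipate a serious obstacle here; the argument is a clean induction whose only substantive input is Lemma \ref{el}. The one point requiring a little care is bookkeeping the stability hypotheses: at each stage one must observe that the auxiliary polynomials $r$ and the segments $(1-s)r + sp_n$ are genuinely convex combinations of the original family (allowing zero weights), so that the blanket stability assumption applies and one never needs a stability fact that was not given. Monicity and equal degree are preserved throughout since convex combinations of monic polynomials of a common degree are again monic of that degree. An alternative, slightly slicker route avoiding explicit induction would be to observe that $\maxroot(\sum t_i p_i) \ge \maxroot(\text{some pairwise combination})$ by peeling off one term at a time, but the induction above makes the logic most transparent.
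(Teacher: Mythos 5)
Your proof is correct and follows the same route as the paper, which simply states that \eqref{ell1} follows by induction on $n$ using Lemma \ref{el}; your write-up supplies exactly that induction, with the right bookkeeping that the auxiliary polynomial $r$ and the segments $(1-s)r+sp_n$ are themselves convex combinations of the original family, so the blanket stability hypothesis applies.
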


\begin{proof}
Using Lemma \ref{el} we can easily show \eqref{ell1} by induction on the number of polynomials $p_1,\ldots,p_n$.
\end{proof}

\begin{lemma}\label{ind}
 Let $X$ be a random rank one positive semidefinite $d\times d$ matrix. Let $A_1,\ldots, A_m$ be $d\times d$ deterministic positive semidefinite matrices. Then with positive probability we have
\begin{equation}\label{ind1}
\maxroot(\mu[X,A_1,\ldots,A_m]) \le \maxroot(\mu[\E{}{X},A_1,\ldots,A_m]).
\end{equation}
\end{lemma}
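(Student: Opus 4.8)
The plan is to realize $\mu[X, A_1, \dots, A_m]$ as a convex combination of the polynomials $\mu[B, A_1, \dots, A_m]$ as $B$ ranges over the finitely many values $B_1, \dots, B_n$ of $X$, and then to apply the interlacing machinery of Lemma \ref{ell} together with the averaging identity from Lemma \ref{mux}. First I would write $X = B_j$ with probability $p_j$, $\sum_{j=1}^n p_j = 1$, $p_j \ge 0$, each $B_j$ a rank one positive semidefinite matrix. By Lemma \ref{mas} (multi-affinity of $\mu$ in its first argument, since the $p_j$ sum to $1$) we get
\[
\mu[\E{}{X}, A_1, \dots, A_m] = \mu\Bigl[\sum_{j=1}^n p_j B_j, A_1, \dots, A_m\Bigr] = \sum_{j=1}^n p_j\, \mu[B_j, A_1, \dots, A_m],
\]
which is exactly the content of \eqref{mux1} in this setting. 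So the right-hand side of \eqref{ind1} is $\maxroot$ of a convex combination of the $p_j = \mu[B_j, A_1, \dots, A_m]$.

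Next I would check the hypotheses of Lemma \ref{ell} for this family. Each $B_j$ is positive semidefinite hermitian (rank one, PSD), and the $A_i$ are positive semidefinite hermitian, so by Corollary \ref{crs} each $p_j := \mu[B_j, A_1, \dots, A_m]$ is real stable and monic of degree $d$; they all have the same degree. Crucially, every convex combination $\sum_j t_j p_j$ with $\sum_j t_j = 1$, $t_j \ge 0$, equals $\mu[\sum_j t_j B_j, A_1, \dots, A_m]$ again by Lemma \ref{mas}, and $\sum_j t_j B_j$ is still positive semidefinite hermitian, so Corollary \ref{crs} gives that this convex combination is real stable. Thus Lemma \ref{ell} applies: there exists $j_0$ with
\[
\maxroot(p_{j_0}) = \maxroot(\mu[B_{j_0}, A_1, \dots, A_m]) \le \maxroot\Bigl(\sum_{j=1}^n p_j p_j\Bigr) = \maxroot(\mu[\E{}{X}, A_1, \dots, A_m]).
\]
(Here I am reusing $p_j$ both as probabilities and as polynomials; in the writeup I would rename one of them, say $q_j$ for the polynomials.) Finally, since $\PP(X = B_{j_0}) = p_{j_0} > 0$ — or, if $p_{j_0} = 0$, that value is never taken and we may discard it, so we may assume $p_{j_0} > 0$ — the event $\{X = B_{j_0}\}$ has positive probability, and on it $\mu[X, A_1, \dots, A_m] = \mu[B_{j_0}, A_1, \dots, A_m]$, whose largest root satisfies \eqref{ind1}. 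This completes the argument.

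The only real subtlety — and the step I would flag as the main point rather than an obstacle — is making sure the "every convex combination is stable" hypothesis of Lemma \ref{ell} genuinely holds, which it does precisely because convex combinations of the values $B_j$ stay inside the cone of positive semidefinite hermitian matrices and Corollary \ref{crs} covers all such matrices; no extra analytic input is needed. A minor bookkeeping matter is discarding values of probability zero so that the selected $j_0$ has $p_{j_0} > 0$. Everything else is a direct assembly of Lemma \ref{mas}, Corollary \ref{crs}, and Lemma \ref{ell}.
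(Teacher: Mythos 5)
Your proposal is correct and follows essentially the same route as the paper: both realize $\mu[\E{}{X},A_1,\ldots,A_m]$ as a convex combination of the polynomials $\mu[B_j,A_1,\ldots,A_m]$, verify the stability of all convex combinations via Corollary \ref{crs} (the paper phrases this step through Lemma \ref{mux} with an auxiliary random variable $Y$, which is just Lemma \ref{mas} in disguise), and then apply Lemma \ref{ell}. Your explicit remark about discarding values of probability zero is a harmless bookkeeping point the paper leaves implicit.
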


\begin{proof}
Suppose that $X$ takes values $B_1,\ldots, B_n$. Define polynomials $p_i=\mu[B_i,A_1,\ldots,A_m]$, $i=1,\ldots,m$. By Corollary \ref{crs} these are real, stable, and  monic polynomials of degree $d$. Moreover, any convex combination $\sum_{i=1}^n t_i p_i$ is also stable. Indeed, consider a random variable $Y$ taking values $B_1,\ldots, B_n$ with probabilities $t_1,\ldots, t_n$, resp. By Lemma \ref{mux} and Corollary \ref{crs}
\[
\sum_{i=1}^n t_i p_i = \E{}{\mu[Y,A_1,\ldots, A_m]} = \mu[\E{}{Y},A_1,\ldots, A_m]
\]
is a real stable polynomial. Hence, Lemma \ref{ell} yields \eqref{ind1}.
\end{proof}

Iterating Lemma \ref{ind} gives the required control on roots of mixed characteristic polynomials. This is an essence of  the method of interlacing family of polynomials.

\begin{lemma}\label{max}
Suppose that $X_1, \dots, X_m$ are jointly independent random rank one positive semidefinite $d\times d$ matrices which take finitely many values. Then with positive probability
\begin{equation}\label{max1}
\maxroot(\mu[X_1,\ldots X_m]) \le  \maxroot(\mu[\E{}{X_1}, \ldots, \E{}{X_m}]).
\end{equation}
\end{lemma}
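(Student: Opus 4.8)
The plan is to peel off the random matrices one at a time, applying Lemma~\ref{ind} at each step with the matrices not yet peeled frozen to deterministic values. Set $\rho := \maxroot(\mu[\E{}{X_1},\ldots,\E{}{X_m}])$; the goal is to produce a single outcome $(b_1,\ldots,b_m)$ of $(X_1,\ldots,X_m)$, occurring with positive probability, for which $\maxroot(\mu[b_1,\ldots,b_m]) \le \rho$. I will construct, for $k=m,m-1,\ldots,0$, matrices $b_{k+1},\ldots,b_m$ such that each $b_j$ is a value of $X_j$ attained with positive probability and
\begin{equation}\label{maxhyp}
\maxroot(\mu[\E{}{X_1},\ldots,\E{}{X_k},b_{k+1},\ldots,b_m]) \le \rho .
\end{equation}

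For $k=m$ there is nothing to construct and \eqref{maxhyp} reads $\rho\le\rho$. Assume $b_{k+1},\ldots,b_m$ have been found for some $1\le k\le m$. Two observations make Lemma~\ref{ind} applicable: each $\E{}{X_i}$ is a finite convex combination of positive semidefinite matrices, hence positive semidefinite, and each $b_j$ is rank one positive semidefinite, being a value of $X_j$; moreover, by the symmetry of $\mu$ (Lemma~\ref{mas}) we may place $X_k$ in the first argument of $\mu$. Thus Lemma~\ref{ind}, applied to the random rank one positive semidefinite matrix $X_k$ with deterministic positive semidefinite matrices $\E{}{X_1},\ldots,\E{}{X_{k-1}},b_{k+1},\ldots,b_m$, produces a value $b_k$ of $X_k$, attained with positive probability, for which
\[
\maxroot(\mu[\E{}{X_1},\ldots,\E{}{X_{k-1}},b_k,b_{k+1},\ldots,b_m]) \le \maxroot(\mu[\E{}{X_1},\ldots,\E{}{X_{k-1}},\E{}{X_k},b_{k+1},\ldots,b_m]) \le \rho ,
\]
which is \eqref{maxhyp} with $k$ replaced by $k-1$. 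After $m$ steps we reach $k=0$, i.e.\ a tuple $(b_1,\ldots,b_m)$ with $\maxroot(\mu[b_1,\ldots,b_m])\le\rho$.

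Finally, since each $b_i$ is attained by $X_i$ with positive probability and $X_1,\ldots,X_m$ are jointly independent,
\[
\PP(\maxroot(\mu[X_1,\ldots,X_m]) \le \rho) \ge \PP(X_1=b_1,\ldots,X_m=b_m) = \prod_{i=1}^m \PP(X_i=b_i) > 0 ,
\]
which is exactly \eqref{max1}. The points needing care are the bookkeeping of the inductive statement \eqref{maxhyp}, the repeated appeal to symmetry to bring the peeled variable into position, and checking that every intermediate tuple of matrices still satisfies the positive semidefiniteness hypothesis of Lemma~\ref{ind}; the joint independence hypothesis is used only once, in the last display. In truth the substantive difficulty of the whole circle of ideas is upstream, in Lemma~\ref{ind} and behind it Lemma~\ref{el} and the real-stability machinery; the present lemma is a clean iteration of Lemma~\ref{ind}.
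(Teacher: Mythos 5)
Your proof is correct and follows essentially the same route as the paper: both iterate Lemma \ref{ind}, replacing one expectation at a time by a well-chosen value of the corresponding $X_k$ (with the symmetry of $\mu$ from Lemma \ref{mas} justifying the placement of the random slot). The only cosmetic difference is that the paper invokes independence at each step via conditioning, whereas you freeze the not-yet-peeled variables at their unconditional expectations and use independence just once at the end to see that $\PP(X_1=b_1,\ldots,X_m=b_m)>0$; the two bookkeepings are equivalent.
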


\begin{proof}
By Lemma \ref{ind} a random matrix $X_1$ takes  some value $A_1$ (with positive probability) such that
\[
\maxroot(\mu[\E{}{X_1}, \ldots, \E{}{X_m}]) \ge \maxroot(\mu[A_1,\E{}{X_2}, \ldots, \E{}{X_m}]).
\]
By the independence assumption, if we condition the probability space to the event $X_1=A_1$, then random variables $X_2,\ldots, X_m$ have the same joint distribution. Again by Lemma \ref{ind}, $X_2$ takes some value $A_2$ (with positive probability) such that
\[
\maxroot(\mu[A_1,\E{}{X_2}, \ldots, \E{}{X_m}]) \ge \maxroot(\mu[A_1,A_2,\E{}{X_3}, \ldots, \E{}{X_m}]).
\]
Conditioning on the event $X_1=A_1$ and $X_2=A_2$, and repeating this argument for remaining random variables yields \eqref{max1}. 
\end{proof}

Finally, we can complete the proof of the main implication.

\begin{lemma} $(MCP) \implies (MSS)$.
\end{lemma}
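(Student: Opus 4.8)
The plan is to reduce the probabilistic statement $(MSS)$ to the deterministic polynomial estimate $(MCP)$ via the interlacing-family machinery just assembled, namely Lemma \ref{max} together with Lemma \ref{mux}. Given jointly independent random vectors $v_1,\dots,v_m$ in $\C^d$ taking finitely many values with $\sum_i \E{}{v_i v_i^*} = \bI$ and $\E{}{\|v_i\|^2}\le \epsilon$, I would set $X_i = v_i v_i^*$, which are jointly independent random rank one positive semidefinite $d\times d$ matrices taking finitely many values, with $\E{}{X_i} = \E{}{v_i v_i^*} =: A_i$. The $A_i$ are positive semidefinite, $\sum_i A_i = \bI$, and $\operatorname{Tr}(A_i) = \E{}{\operatorname{Tr}(v_iv_i^*)} = \E{}{\|v_i\|^2}\le\epsilon$, so the hypotheses of $(MCP)$ are met.

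Next I would invoke Lemma \ref{max}: with positive probability, $\maxroot(\mu[X_1,\dots,X_m]) \le \maxroot(\mu[\E{}{X_1},\dots,\E{}{X_m}]) = \maxroot(\mu[A_1,\dots,A_m])$. By $(MCP)$ the right-hand side is at most $(1+\sqrt{\epsilon})^2$. Now the point of using rank one matrices is that, by Lemma \ref{mu}, for each outcome the mixed characteristic polynomial $\mu[X_1,\dots,X_m](z)$ equals the ordinary characteristic polynomial $\det(z\bI - \sum_i X_i) = \det(z\bI - \sum_i v_iv_i^*)$, whose largest root is exactly $\|\sum_i v_i v_i^*\|$ since $\sum_i v_iv_i^*$ is positive semidefinite. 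Therefore, on the positive-probability event supplied by Lemma \ref{max}, we have $\|\sum_i v_i v_i^*\| \le (1+\sqrt{\epsilon})^2$, which is precisely \eqref{thmp2}.

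For the second assertion, when each $v_i$ takes at most two values and $\epsilon < 1/4$, I would run the identical argument but use the sharper bound in the final clause of $(MCP)$ — wait, that clause is not part of $(MCP)$ as stated, so instead I should note that the two-value refinement requires a correspondingly refined version of $(MCP)$; following Remark \ref{rem} and \cite{BCMS}, the bound $\maxroot(\mu[A_1,\dots,A_m]) \le 1 + 2\sqrt{\epsilon}\sqrt{1-\epsilon}$ holds under the extra structural hypothesis, and feeding this into the same chain yields $\|\sum_i v_iv_i^*\| \le 1 + 2\sqrt{\epsilon}\sqrt{1-\epsilon}$ with positive probability.

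I do not anticipate a serious obstacle here: the proof is essentially a bookkeeping assembly of Lemmas \ref{mu}, \ref{mux}, and \ref{max}. The one point that deserves care is the passage from ``largest root of $\det(z\bI - M)$'' to ``operator norm of $M$'': this is valid precisely because $M = \sum_i v_iv_i^*$ is positive semidefinite (so all its eigenvalues are nonnegative and the operator norm is the spectral radius), and this is where the rank one hypothesis is genuinely used, since Lemma \ref{mu} only identifies $\mu$ with an ordinary characteristic polynomial in the rank one case. The two-value refinement is the only place where one must appeal to a strengthening of the stated $(MCP)$, and I would flag that explicitly rather than pretend it follows from the version proved in this paper.
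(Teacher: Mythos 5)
Your proposal is correct and follows essentially the same route as the paper: set $X_i=v_iv_i^*$, check that \eqref{thmp1} gives \eqref{mixed1} for $A_i=\E{}{X_i}$, identify $\|\sum_i v_iv_i^*\|$ with $\maxroot(\mu[X_1,\ldots,X_m])$ via Lemma \ref{mu} (using positive semidefiniteness), and conclude with Lemma \ref{max} and the root bound from $(MCP)$. Your remark that the two-value refinement is not a consequence of $(MCP)$ as stated but requires the refined result of \cite{BCMS} matches the paper, which likewise defers that special case to \cite{BCMS} in its notes.
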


\begin{proof}
Take random vectors $v_1,\ldots, v_m$ as in Theorem $(MSS)$. Define rank one positive semidefinite random matrices $X_i=v_i v_i^*$, $i=1,\ldots,m$. The assumption \eqref{thmp1} translates into the assumption \eqref{mixed1} for $A_i=\E{}{X_i}$. Since $X_i$ are hermitian, by Lemma \ref{mu}
\[
\left\| \sum_{i=1}^m v_i v_i^{*} \right\| = \left\| \sum_{i=1}^m X_i \right\| = \maxroot \bigg(\det\bigg(z\mathbf I -\sum_{i=1}^m X_i\bigg) \bigg) = \maxroot(\mu[X_1,\ldots X_m]).
\]
By Lemma \ref{max}, the bound on $\maxroot(\mu[A_1,\ldots A_m])$ in the conclusion of Theorem $(MCP)$ yields the same bound on $\left\| \sum_{i=1}^m v_i v_i^{*} \right\|$ with positive probability.
\end{proof}

\subsection{Multivariate barrier argument} The proof of Theorem $(MCP)$ hinges on a multivariate barrier argument.

\begin{definition} Let $p\in \R[z_1,\ldots,z_m]$. We say that $x=(x_1,\ldots,x_m) \in \R^m$ is {\it above the roots} of $p$ if
\[
p(x+t)>0 \qquad\text{for all }t \in [0,\infty)^m.
\]
A {\it barrier function} of $p$ in direction of $z_i$, $i=1,\ldots,m$, is defined for such $x$ as
\[
\Phi^i_p(x) = \partial_{z_i} \log p(x)=  \frac{\partial_{z_i} p(x)}{p(x)}.
\]
\end{definition}

We need the following result about zeros of real stable polynomials in two variables, which is illustrated in Figure 1.

\begin{figure}\label{figures}
\begin{center}
\includegraphics[width=2.5in]{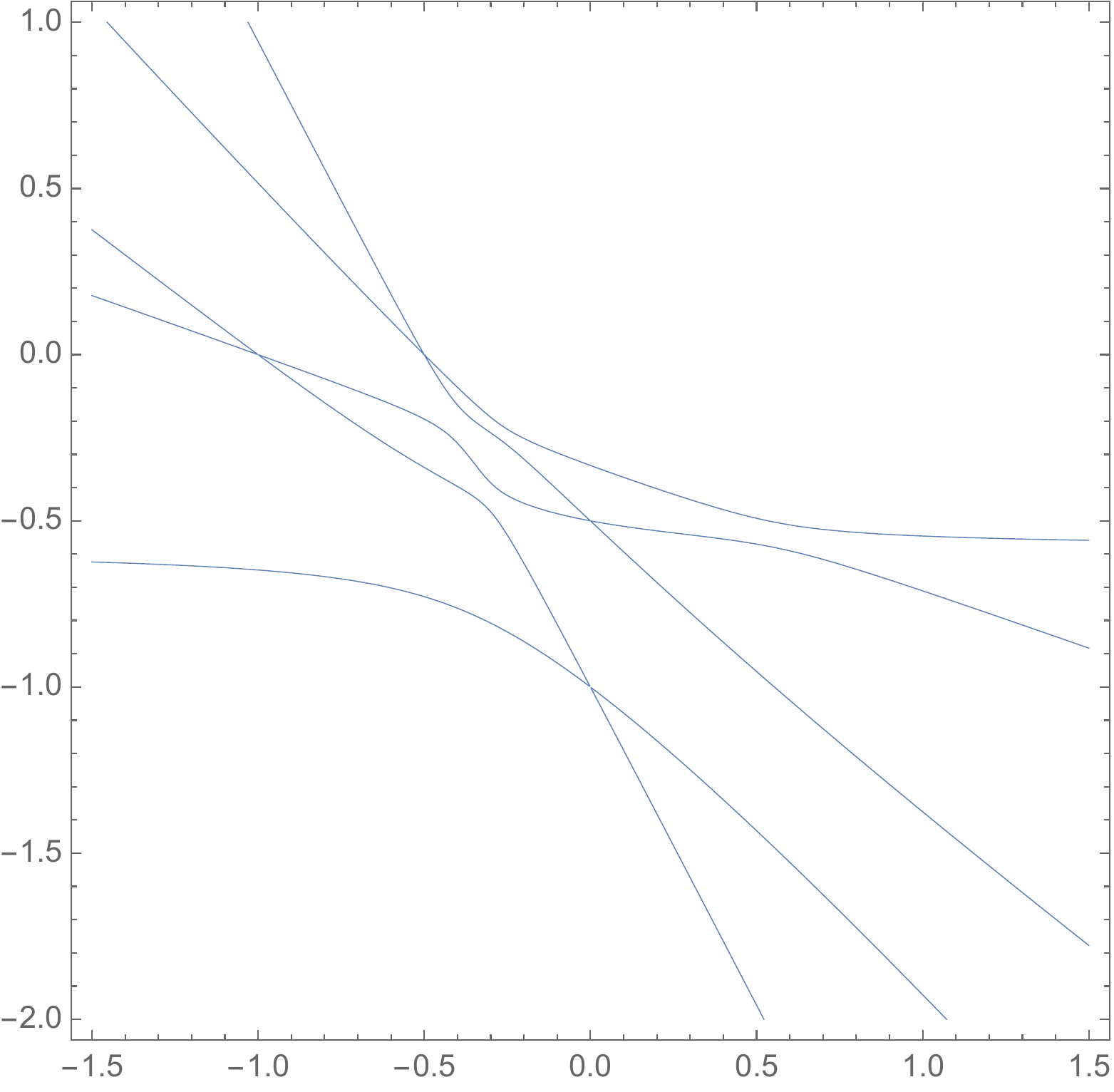}
\break
\includegraphics[width=2.5in]{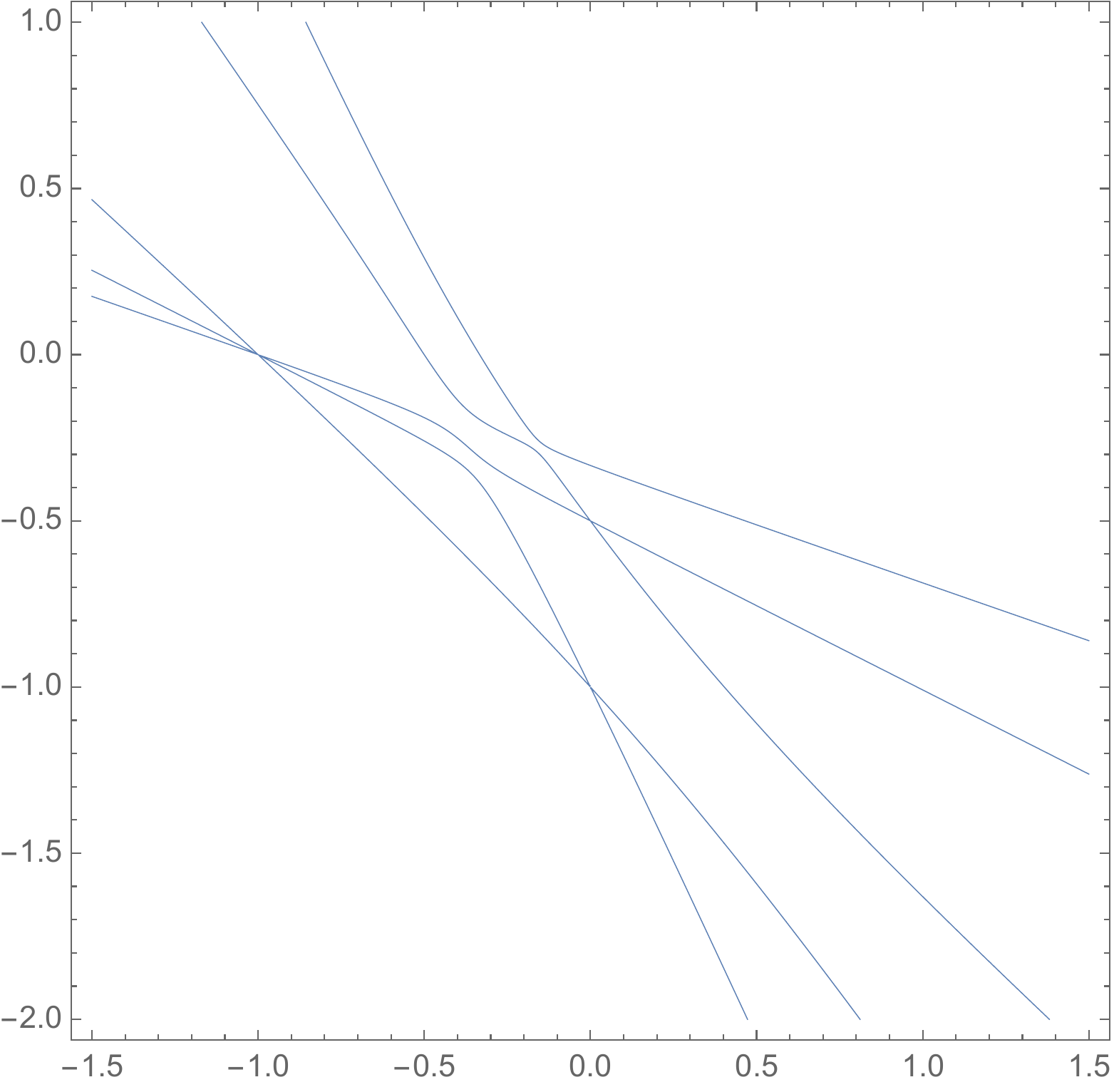}
\includegraphics[width=2.5in]{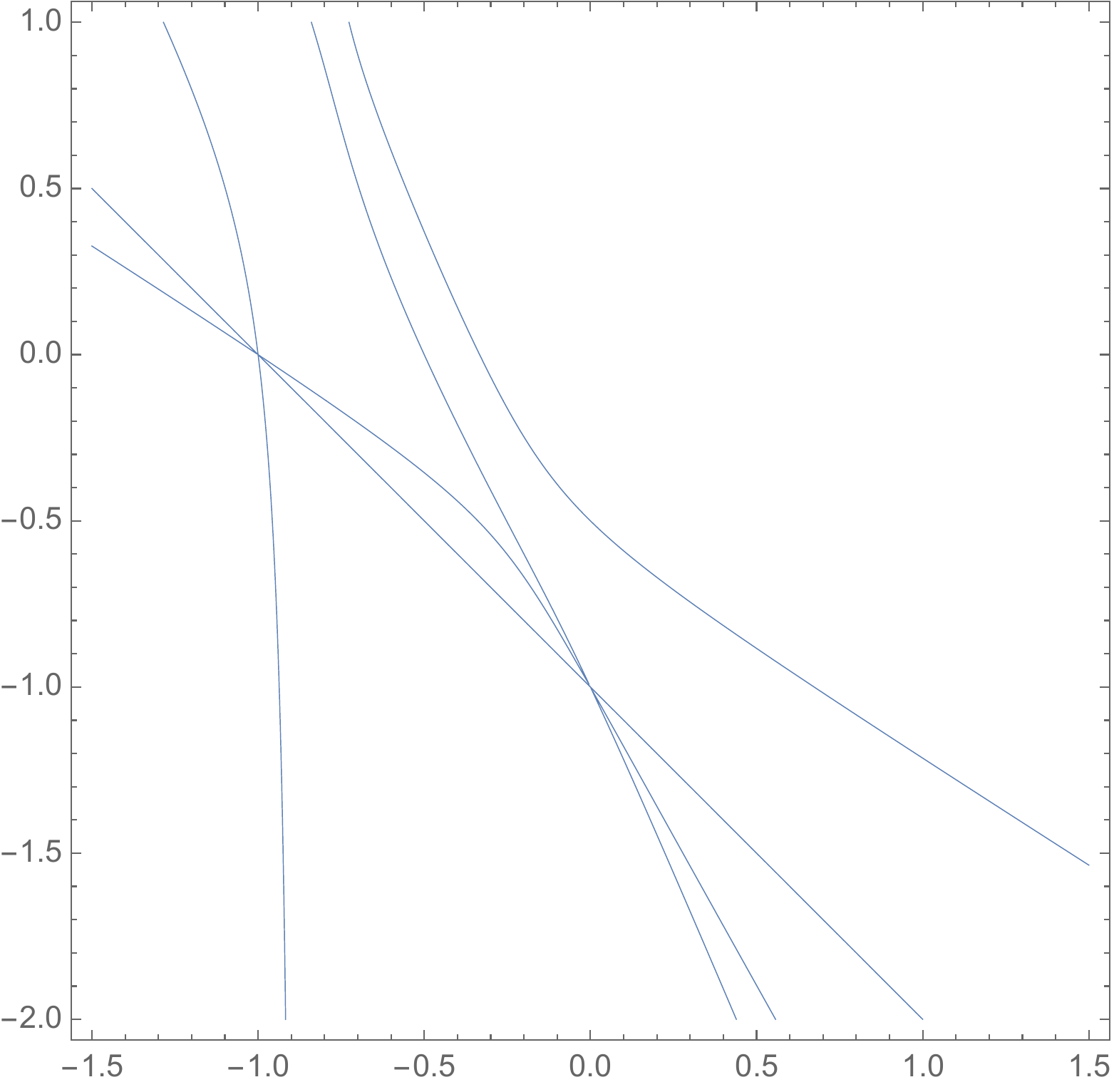}
\end{center}
\caption{Examples of zero sets of real stable polynomials on the plane.}
\end{figure}

\begin{lemma}\label{zp}
Let $p \in \R[z,w]$ be a stable polynomial. Then for all but finitely many $x \in \R$, a polynomial $p(x,w) \in \R[w]$ has all real roots and constant degree $d\in \N$. Let $y_1(x) \le \ldots \le y_d(x)$ be its roots counting multiplicity.  Then, for each $i\in [d]$, $x \mapsto y_i(x)$ is non-increasing.
\end{lemma}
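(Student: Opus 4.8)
The plan is to first reduce to the case that $p(x,w)$ has degree exactly $d$ in $w$ for all but finitely many $x$. Writing $p(z,w) = \sum_{k=0}^{d} a_k(z) w^k$ where $d$ is the degree of $p$ in $w$, the leading coefficient $a_d(z)$ is a nonzero polynomial, so $a_d(x) \ne 0$ for all but finitely many real $x$; for such $x$ the degree is constant. Moreover, since $p$ is real stable, $p(x,w)$ is either identically zero or stable by the restriction property (Lemma \ref{rd}); we have just excluded the identically-zero case, and a univariate real stable polynomial has all real roots. Hence off a finite set the roots $y_1(x) \le \ldots \le y_d(x)$ are well-defined real continuous functions of $x$ (continuity of roots follows from the constant leading coefficient and Hurwitz-type arguments, or one may invoke it as standard).

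Next I would establish monotonicity. The key mechanism is that real stability forbids the graph of any $y_i$ from having positive slope. Suppose, for contradiction, that some $y_i(x)$ is strictly increasing on a subinterval; then by continuity I can find two real points $x_0 < x_1$ and a common value $c \in \R$ with $y_i(x_0) = y_j(x_1) = c$ is the wrong tack — instead I want to exploit the complex zero locus directly. The cleanest route: fix a real point $x^*$ off the finite exceptional set and a simple real root $w^* = y_i(x^*)$ of $p(x^*, \cdot)$. By the implicit function theorem the zero set of $p$ near $(x^*, w^*)$ is the graph of an analytic function $w = \phi(z)$ with $\phi(x^*) = w^*$, and $\phi'(x^*) = -\partial_z p / \partial_w p$ evaluated at $(x^*,w^*)$, which is real since $p$ has real coefficients and both partials are real there. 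If this slope $\phi'(x^*)$ were strictly positive, then moving $z$ slightly into the upper half plane, $z = x^* + i\varepsilon$, would push $w = \phi(z) \approx w^* + i\varepsilon \phi'(x^*)$ into the upper half plane as well, giving a zero of $p$ with both coordinates in $\C_+$ — contradicting stability. Hence $\phi'(x^*) \le 0$ at every simple real root, so each branch $y_i$ is non-increasing wherever it is locally a simple-root branch. At the (finitely or isolatedly many) points where roots collide, non-increasingness extends by continuity since $y_i$ is a continuous function that is non-increasing on a dense open set.

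The main obstacle I anticipate is handling multiple roots cleanly: at a value of $x$ where $p(x, \cdot)$ has a repeated root, $\partial_w p$ vanishes and the implicit function theorem argument breaks down locally, so one cannot directly read off the slope of $y_i$ there. The fix is a density/continuity argument: the set of $x$ for which $p(x,\cdot)$ has only simple roots is the complement of the zero set of the $w$-discriminant of $p$ (a nonzero polynomial in $z$, nonzero because $p$ is not identically zero and — after possibly removing a square factor — is squarefree in $w$ on a generic slice), hence cofinite; on this cofinite set each $y_i$ is locally the simple-root branch through that point and is therefore non-increasing by the stability argument above; and since each $y_i$ is continuous on all of $\R$ minus the original finite set, being non-increasing on a dense open subset forces it to be non-increasing everywhere there. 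One should double-check the edge case where $p$ has a repeated irreducible factor in $\C[z,w]$, but multiplicities only thicken the graph without changing the sign of any slope, so the conclusion is unaffected.
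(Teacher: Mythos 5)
Your proposal is correct and follows essentially the same route as the paper: reduce to slices of constant degree via the leading coefficient, use the restriction property for real-rootedness, and rule out a positive slope of a root branch by the implicit function theorem, since $z=x^*+i\ve$ would then produce a zero of $p$ in $\C_+^2$, contradicting stability. The only difference is bookkeeping at non-simple roots, where the paper appeals to the branch structure of real algebraic curves while you use the $w$-discriminant plus continuity of the ordered roots; both handle this technical point at a comparable level of detail, and your squarefree-part remark correctly disposes of the repeated-factor case.
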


\begin{proof}
We can write $p(z,w)= \sum_{i=0}^d w^i q_i(z)$, where each $q_i \in \R[z]$. By Lemma \ref{rd} for fixed $x \in \R$, a polynomial $p(x,w) \in \R[w]$ has all real roots. Its degree equals $d$ if and only if $q_d(x) \ne 0$. The fundamental theorem of algebra implies the existence of roots $y_1(x) \le \ldots \le y_d(x)$. It remains to show that $x \mapsto y_i(x)$ is non-increasing, where $i\in [d]$.

We claim that for every real root  $p(x,y)=0$,  $(x,y) \in \R^2$, we have
\begin{equation}\label{der}
\partial_{z} p(x,y)\le 0 \qquad\text{and}\qquad \partial_{w} p(x,y)\le 0.
\end{equation}
On the contrary, suppose that $\alpha = \partial_{w} p(x,y)>0$. By the implicit function theorem for holomorphic functions \cite[Theorem I.7.6]{FG}, there exists complex neighborhoods $U_{x}, U_{y} \subset \C$ of $x$ and $y$, resp., and a holomorphic function $h: U_{x} \to U_{y}$ such that
\[
\{(z,w) \in U_{x} \times U_{y}: p(z,w)=0 \} = \{ (z, h(z)): z \in U_{x} \}.
\]
Taking $z=x+ \ve i$ and $h(z) \approx y+ h'(x)\ve i=y+\alpha \ve i$ for small $\ve>0$ produces a root of $p$ with positive imaginary parts, which contradicts stability of $p$. By symmetry we deduce \eqref{der}.

To finish the proof, it is convenient to use a basic fact about algebraic curves
\[
\{ (x,y) \in \R^2: p(x,y) =0 \}.
\]
Every algebraic curve decomposes as a finite union of {\it branches} connected by some points and  a finite number of vertical lines. A branch is the graph of a smooth and monotone function defined on an open (possibly unbounded) interval in the $x$-axis. Hence, a branch is the graph of some $y_i(x)$ restricted to an appropriate open interval. Differentiating $p(x,y_i(x))=0$ with respect to $x$ yields
\[
\partial_{z} p(x,y_i(x)) + \partial_{w} p(x,y_i(x)) y_i'(x) =0.
\]
By \eqref{der} we have $y_i'(x) \le 0$.
\end{proof}

\begin{lemma}\label{bar}
Let $p \in \R[z_1,\ldots,z_m]$ be stable. Let $1\le i,j \le m$. Then, for any $k\in \N_0$, partial derivatives of the barrier function of $p$ satisfy
\[
(-1)^k \partial_{z_j}^k \Phi_p^i (x) \ge 0 \qquad\text{if }x\in \R^n \text{ is above the roots of $p$}.
\]
In particular, $t\mapsto \Phi_p^i(x+te_j)$ is non-negative, non-increasing, and convex function of $t\ge 0$.
\end{lemma}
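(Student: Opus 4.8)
The plan is to reduce the statement to one- and two-variable computations, using two elementary facts throughout. First, if $x$ is above the roots of $p$ then so is $x+se_l$ for every $l$ and every $s\ge 0$, because $p\bigl((x+se_l)+t\bigr)=p\bigl(x+(se_l+t)\bigr)>0$ for all $t\in[0,\infty)^m$. Second, $p(x)>0$ at such $x$, so $\Phi^i_p=\partial_{z_i}p/p$ and all of its partial derivatives are smooth (rational functions with non-vanishing denominator) on a neighbourhood of any point above the roots; hence each claimed inequality is a closed condition that it suffices to verify on a dense subset of the region above the roots.

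With this in hand I would first handle the case $k=0$ (which does not involve $j$) and the case $i=j$ together. Freezing all variables but $z_i$ at their values $x_l$, the restriction part of Lemma \ref{rd} makes $g(z_i)=p(\ldots,z_i,\ldots)$ real stable, so $g(z_i)=c\prod_l(z_i-y_l)$ with all $y_l\in\R$, and $g(x_i+s)>0$ for $s\ge 0$ forces $x_i>y_l$ for every $l$. As a function of $z_i$ alone, $\Phi^i_p=g'/g=\sum_l(z_i-y_l)^{-1}$ with the $y_l$ constant, so
\[
(-1)^k\partial_{z_i}^k\Phi^i_p(x)=k!\sum_l\frac{1}{(x_i-y_l)^{k+1}}\ge 0\qquad(k\ge 0).
\]

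The substance of the lemma is the case $i\ne j$, $k\ge1$. I would freeze all variables but $z_i,z_j$ at their values $x_l$, producing a real stable bivariate polynomial $q(z_i,z_j)$ (Lemma \ref{rd} again), and apply Lemma \ref{zp} with first variable $z_i$ and second variable $z_j$: for all but finitely many real $z_i$, $q(z_i,\cdot)$ has a fixed degree $d$ and $d$ real roots $w_1(z_i)\le\cdots\le w_d(z_i)$, each $w_l$ non-increasing in $z_i$; being root branches of a plane algebraic curve, the $w_l$ are moreover differentiable off a finite set $E$ that depends only on the frozen coordinates. For such a generic $z_i$ write $q(z_i,z_j)=c(z_i)\prod_{l=1}^d(z_j-w_l(z_i))$; near $(x_i,x_j)$ each factor is positive, so a logarithmic differentiation in $z_i$ gives
\[
\Phi^i_p(z_i,z_j)=\frac{c'(z_i)}{c(z_i)}-\sum_{l=1}^d\frac{w_l'(z_i)}{z_j-w_l(z_i)},
\]
and differentiating $k\ge1$ times in $z_j$ kills the first term, leaving
\[
(-1)^k\partial_{z_j}^k\Phi^i_p(x)=k!\sum_{l=1}^d\frac{-w_l'(x_i)}{(x_j-w_l(x_i))^{k+1}}.
\]
Each summand is $\ge 0$: one has $-w_l'(x_i)\ge 0$ since $w_l$ is non-increasing, and $x_j-w_l(x_i)>0$ since the ray $x+te_j$, $t\ge 0$, lies above the roots, so $q(x_i,x_j+t)=p(x+te_j)>0$ for all $t\ge 0$, forcing $x_j$ to exceed the largest root $w_d(x_i)$ of $q(x_i,\cdot)$.

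It remains to drop the genericity of $z_i$ and to read off the ``in particular'' clause. For an arbitrary $x$ above the roots, the whole ray $x+se_i$, $s\ge 0$, lies above the roots, all but finitely many of its points have $i$-th coordinate outside $E$, and $(-1)^k\partial_{z_j}^k\Phi^i_p$ is continuous near $x$ because $p(x)>0$; letting $s\downarrow 0$ yields the inequality at $x$, completing the proof for all $i,j,k$. Applying the cases $k=0,1,2$ at the point $x+te_j$ (itself above the roots for $t\ge 0$) then shows that $t\mapsto\Phi^i_p(x+te_j)$ is $\ge 0$, has non-positive derivative, and has non-negative second derivative, i.e. it is non-negative, non-increasing, and convex. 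I expect the main obstacle to be the case $i\ne j$, and within it the reliance on the fine structure supplied by Lemma \ref{zp} — that the root functions $w_l(z_i)$ are monotone and piecewise smooth — together with the limiting argument needed to reach the non-generic values of $x_i$; the remaining steps are routine logarithmic differentiation.
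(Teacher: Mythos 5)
Your proof is correct and follows essentially the same route as the paper: reduce via the restriction part of Lemma \ref{rd} to one variable when $i=j$ (giving the explicit sum $k!\sum_l (x_i-y_l)^{-(k+1)}$) and to two variables when $i\ne j$, where Lemma \ref{zp} supplies the factorization with real, non-increasing root branches. The only difference is cosmetic: you differentiate the branches $w_l(z_i)$ directly (hence the genericity set $E$ and the limiting argument along $x+se_i$), whereas the paper packages the same fact as the statement that $x_1\mapsto(-1)^k\partial_{z_2}^k\log p(x_1,x_2)=-(k-1)!\sum_l (x_2-y_l(x_1))^{-k}$ is non-decreasing, which uses only the monotonicity of the roots.
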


\begin{proof}
First suppose that $i=j$. Freezing all variables except $z_i=z_j$, by Lemma \ref{rd} (restriction) we can assume that $m=1$. Suppose that $x\in \R$ is above all roots of a stable polynomial $p \in \R[z]$. The stability of $p$ implies that $p$ has all real roots. Hence, $p(z)=c \prod_{j=1}^d (z- y_j)$, where $y_j \in \R$, and
\[
\Phi_p(x) = \bigg(\frac{p'}p\bigg) (x) = \sum_{j=1}^d \frac{1}{x-y_j}.
\]
By a direct calculation
\[
(-1)^k (\Phi_p)^{(k)}(x) =  k! \sum_{j=1}^d \frac{1}{(x -y_j)^{k+1}}.
\]
Since $x$ is above the roots of $p$, we have $x> \max(y_j)$ and all of the above terms are positive.
The above argument also covers trivially the case $k=0$.

It remains to deal with the case $i \ne j$ and $k\ge 1$. By Lemma \ref{rd} and symmetry, we can assume that $m=2$, $i=1$, and $j=2$. Suppose that  $x=(x_1,x_2)$ is above the roots of a stable polynomial $p\in \R[z_1,z_2]$. Since
\[
(-1)^k \partial_{z_2}^k \Phi_p^1 (x) = (-1)^k \partial_{z_2}^k\partial_{z_1} \log p (x)= \partial_{z_1} ( (-1)^k \partial_{z_2}^k \log p)(x)
\]
it suffices to show that $x_1 \mapsto (-1)^k \partial_{z_2}^k \log p(x_1,x_2)$ is non-decreasing.

By Lemma \ref{zp} we can write
\[
p(x_1,x_2)=c(x_1) \prod_{i=1}^d (x_2-y_i(x_1)).
\]
Hence, 
\[
x_1 \mapsto (-1)^k\partial^k_{z_2} \log p(x_1,x_2) = - (k-1)! \sum_{i=1}^d \frac{1}{(x_2-y_i(x_1))^{k}}.
\]
Since $x$ is above the roots of $p$, we have $x_2> \max(y_i(x_1))$ and by  Lemma \ref{zp}, the above function is non-decreasing.
\end{proof}

The following lemma provides the crucial control of the barrier function of $(1-\partial_{z_j})q$ in terms of the barrier function of $q$.

\begin{lemma}\label{br2} 
 Let $q\in \R[z_1,\ldots,z_m]$ be stable. Suppose that $x\in \R^m$ lies above the roots of $q$ and 
\begin{equation}\label{br3}
\Phi^j_q(x)\le 1-\frac{1}\delta \qquad\text{for some }j\in [m] \text{ and } \delta>0.
\end{equation}
Then, $x+\delta e_j$ lies above the roots of $(1-\partial_{z_j})q$ and
\begin{equation}\label{br4}
\Phi^i_{(1-\partial_{z_j})q} (x+\delta e_j) \le \Phi^i_q(x) \qquad\text{for all }i\in [m].
\end{equation}
\end{lemma}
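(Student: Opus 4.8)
The plan is to reduce the statement to the behaviour of the single‑variable function $t \mapsto \Phi^i_q(x + te_j)$ on $[0,\infty)$, whose monotonicity and convexity are supplied by Lemma \ref{bar}, combined with a rational identity relating the barrier functions of $q' := (1-\partial_{z_j})q$ and $q$. That identity is obtained by writing $q' = q - \partial_{z_j}q$, dividing numerator and denominator of $\partial_{z_i}q'/q'$ by $q$, and using $\partial_{z_i}\partial_{z_j}q/q = \partial_{z_i}\Phi^j_q + \Phi^i_q\Phi^j_q$; it reads
\[
\Phi^i_{q'} = \Phi^i_q - \frac{\partial_{z_i}\Phi^j_q}{1 - \Phi^j_q}
\]
wherever $q$ and $q'$ are nonzero. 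I will also use that $\partial_{z_i}\Phi^j_q = \partial_{z_i}\partial_{z_j}\log q = \partial_{z_j}\Phi^i_q$ by equality of mixed partials.

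For the first assertion, note that every $x + s$ with $s \in [0,\infty)^m$ is still above the roots of $q$ (upward closure of that condition), so $\Phi^j_q(x+s)$ is defined, and applying the monotonicity part of Lemma \ref{bar} coordinate by coordinate gives $\Phi^j_q(x+s) \le \Phi^j_q(x)$, which by \eqref{br3} is $\le 1 - \tfrac1\delta < 1$. Hence $q'(x+s) = q(x+s)(1 - \Phi^j_q(x+s)) > 0$; specializing to $s \ge \delta e_j$ shows that $x + \delta e_j$ lies above the roots of $q'$, so in particular $1 - \Phi^j_q(x + \delta e_j) \ge \tfrac1\delta > 0$ and the displayed identity may be evaluated at $x + \delta e_j$.

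For \eqref{br4}, fix $i$ and set $g(t) = \Phi^i_q(x + te_j)$; by Lemma \ref{bar}, $g$ is non-negative, non-increasing, and convex on $[0,\infty)$, with $g'(t) = \partial_{z_j}\Phi^i_q(x+te_j) = \partial_{z_i}\Phi^j_q(x+te_j)$. Evaluating the identity at $x + \delta e_j$, inequality \eqref{br4} becomes $g(\delta) - g'(\delta)/(1 - \Phi^j_q(x+\delta e_j)) \le g(0)$. Convexity (the supporting line at $t = \delta$) gives $g(0) \ge g(\delta) - \delta g'(\delta)$, i.e.\ $g(\delta) - g(0) \le \delta g'(\delta)$; and from $0 < (1 - \Phi^j_q(x+\delta e_j))^{-1} \le \delta$ together with $g'(\delta) \le 0$ we get $g'(\delta)/(1 - \Phi^j_q(x+\delta e_j)) \ge \delta g'(\delta) \ge g(\delta) - g(0)$, which rearranges to the desired inequality. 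This argument treats $i = j$ and $i \ne j$ uniformly, since Lemma \ref{bar} guarantees convexity of $t \mapsto \Phi^i_q(x + te_j)$ for every pair $(i,j)$.

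I expect the last paragraph to be the main point. The subtlety is that one must \emph{not} bound $\Phi^i_{q'}(x+\delta e_j)$ by $\Phi^i_q(x+\delta e_j)$ — in the identity the correction term $-\partial_{z_i}\Phi^j_q/(1-\Phi^j_q)$ has the ``wrong'' sign — but must instead absorb the whole decrease $g(\delta) - g(0)$ using convexity. This is exactly why the shift is taken to be $\delta e_j$: the hypothesis $\Phi^j_q(x) \le 1 - \tfrac1\delta$ is precisely what forces $(1 - \Phi^j_q(x+\delta e_j))^{-1} \le \delta$, matching the convexity estimate. The remaining ingredients (the identity, commutation of mixed partials, and upward‑closure of ``above the roots'') are routine.
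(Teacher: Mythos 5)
Your proposal is correct and follows essentially the same route as the paper's proof: the same identity $\Phi^i_{(1-\partial_{z_j})q} = \Phi^i_q - \partial_{z_i}\Phi^j_q/(1-\Phi^j_q)$ (the paper gets it by differentiating $\log((1-\partial_{z_j})q) = \log q + \log(1-\Phi^j_q)$, you by direct algebra), the same upward-closure argument for the first assertion, and the same combination of the monotonicity/convexity statements of Lemma \ref{bar} with the bound $(1-\Phi^j_q(x+\delta e_j))^{-1} \le \delta$ coming from \eqref{br3}. Your remark that the correction term has the ``wrong'' sign and must be absorbed via the convexity estimate at $t=\delta$ is exactly the point of the paper's inequality \eqref{br6}.
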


\begin{proof}
Take any $y\in \R^m$ above $x$, that is, $y_i \ge x_i$ for all $i\in [m]$. By Lemma \ref{bar} (monotonicity), we have  $\Phi^j_q(y) \le \Phi^j_q(x)<1$. Hence,
\begin{equation}\label{br5}
(1-\partial_{z_j})q(y) = q(y)(1- \Phi^j_q(y)) >0.
\end{equation}
In particular, $x+ \delta e_j$ is above the roots of $(1-\partial_{z_j})q$. By \eqref{br5}
\[
\log ((1-\partial_{z_j})q)(y) = \log q(y) + \log(1-\Phi^j_q)(y).
\]
Applying $\partial_{z_i}$, $i\in [m]$, shows
\[
\Phi^i_{(1-\partial_{z_j})q}(y) = \Phi^i_q(y) - \frac{\partial_{z_i}\Phi^j_q(y)}{1-\Phi^j_q(y)}.
\]
Since 
\[
\partial_{z_i}\Phi^j_q(y) = \partial_{z_i}\partial_{z_j} \log q(y) =\partial_{z_j}\partial_{z_i} \log q(y)= \partial_{z_j} \Phi^i_q(y),
\]
the required bound \eqref{br4} is equivalent with the inequality
\begin{equation}\label{br6}
 \frac{\partial_{z_j}\Phi^i_q(x+\delta e_j)}{1-\Phi^j_q(x+\delta e_j)} = 
 \Phi^i_q(x+\delta e_j)- \Phi^i_{(1-\partial_{z_j})q}(x+\delta e_j)
  \ge  \Phi^i_q(x+\delta e_j)- \Phi^i_q(x).
 \end{equation}
By Lemma \ref{bar} (convexity) and (monotonicity), we have
\[
 \Phi^i_q(x+\delta e_j) - \Phi^i_q(x) \le \delta \partial_{z_j} \Phi^i_q(x+\delta e_j) \le 0.
\]
Hence, \eqref{br6} is implied by multiplying the inequality
\begin{equation}\label{br7}
\frac{1}{1-\Phi^j_q(x+\delta e_j)} \le \delta
\end{equation}
 by $\partial_{z_j} \Phi^i_q(x+\delta e_j)$.
Finally, \eqref{br7} holds true as a consequence of Lemma \ref{bar} (monotonicity) and \eqref{br3}
\[
\Phi^j_q(x+\delta e_j) \le \Phi^j_q(x) \le 1 - \frac{1}{\delta}.
\]
This shows \eqref{br6} and consequently \eqref{br4}.
\end{proof}

Applying inductively Lemma \ref{br2} yields the crucial corollary.

\begin{corollary} \label{brc}
Let $q\in \R[z_1,\ldots,z_m]$ be stable. Suppose that $x\in \R^m$ lies above the roots of $q$ and for some $\delta>0$ we have 
\[
\Phi^j_q(x)\le 1-\frac{1}\delta \qquad\text{for all }j\in [m].
\]
Then, $x+(\delta,\ldots,\delta)$ lies above the roots of $\prod_{i=1}^m(1-\partial_{z_i})q$.
\end{corollary}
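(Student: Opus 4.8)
The plan is to iterate Lemma \ref{br2} one variable at a time, peeling off the operators $(1-\partial_{z_i})$ in the order $i=1,2,\ldots,m$ while simultaneously shifting the point by $\delta$ in the corresponding coordinate. Set $q_0=q$ and $q_k=(1-\partial_{z_k})q_{k-1}$ for $k=1,\ldots,m$, so that $q_m=\prod_{i=1}^m(1-\partial_{z_i})q$; also put $x^{(k)}=x+\delta(e_1+\cdots+e_k)$, with $x^{(0)}=x$. I would prove by induction on $k$ the statement $P(k)$: the polynomial $q_k$ is stable, the point $x^{(k)}$ lies above the roots of $q_k$, and $\Phi^i_{q_k}(x^{(k)})\le\Phi^i_q(x)$ for every $i\in[m]$.

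The base case $P(0)$ is exactly the hypothesis. For the inductive step, assume $P(k-1)$. First, $q_k=(1-\partial_{z_k})q_{k-1}$ is stable by Lemma \ref{rd} (differentiation) with $t=-1$ applied to the stable polynomial $q_{k-1}$ (after relabelling $z_k$ as the distinguished variable). Next, to invoke Lemma \ref{br2} with $q$ replaced by $q_{k-1}$, the point $x^{(k-1)}$, and direction $j=k$, I need $\Phi^k_{q_{k-1}}(x^{(k-1)})\le 1-\frac1\delta$; this follows by chaining the inductive bound $\Phi^k_{q_{k-1}}(x^{(k-1)})\le\Phi^k_q(x)$ with the hypothesis $\Phi^k_q(x)\le 1-\frac1\delta$. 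Lemma \ref{br2} then yields that $x^{(k-1)}+\delta e_k=x^{(k)}$ lies above the roots of $q_k$ and that $\Phi^i_{q_k}(x^{(k)})\le\Phi^i_{q_{k-1}}(x^{(k-1)})$ for all $i\in[m]$; combining with $\Phi^i_{q_{k-1}}(x^{(k-1)})\le\Phi^i_q(x)$ from $P(k-1)$ gives $\Phi^i_{q_k}(x^{(k)})\le\Phi^i_q(x)$, which establishes $P(k)$. Taking $k=m$ shows that $x+(\delta,\ldots,\delta)=x^{(m)}$ lies above the roots of $q_m=\prod_{i=1}^m(1-\partial_{z_i})q$, which is the claim.

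The only point requiring care — and hence the main (minor) obstacle — is that the inductive hypothesis must carry the barrier bound $\Phi^i_{q_k}(x^{(k)})\le\Phi^i_q(x)$ for \emph{all} indices $i$, including those not yet processed, since it is precisely the bound in the still-to-be-used direction $i=k+1$ that certifies the hypothesis of Lemma \ref{br2} at the next step; one must also remember to verify at each stage that stability is preserved, which is where Lemma \ref{rd} enters.
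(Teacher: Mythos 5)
Your proposal is correct and is essentially the paper's own argument: the paper defines the same $q_k$ and $y_k=x+\delta\sum_{i\le k}e_i$ and inductively applies Lemma \ref{br2}, with the barrier bounds $\Phi^i_{q_k}(y_k)\le\Phi^i_q(x)$ (for all $i$, including unprocessed directions) serving as the implicit inductive invariant that you have simply made explicit.
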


\begin{proof} For $k=0,\ldots, m$, define
\[
y_k = x+ \delta \sum_{i=1}^k e_i, \qquad q_k = \prod_{i=1}^k (1-\partial_{z_i}) q.
\]
Then, using Lemma \ref{br2}, we show inductively that $y_k\in \R^m$ lies above the roots of $q_k$ for all $k\in [m]$.
\end{proof}

Finally, we are ready to give the proof of $(MCP)$.

\begin{proof}[Proof of Theorem $(MCP)$]
Define
\begin{equation}\label{p}
p(z_1,\ldots,z_m) = \det \bigg( \sum_{i=1}^m z_i A_i \bigg) \in \R[z_1,\ldots,z_m]
\end{equation}
By Lemmas \ref{rs} and \ref{rd}, $p$ is a real stable polynomial. By Jacobi's formula for any $j\in [m]$,
\[
\begin{aligned}
\partial_{z_j} p(x_1,\ldots,x_m) & = \partial_t \det \bigg( \sum_{i=1}^m x_i A_i + t A_j \bigg)\bigg|_{t=0}
\\
& =
\det \bigg( \sum_{i=1}^m x_i A_i \bigg)  \tr\bigg( \bigg(\sum_{i=1}^m x_i A_i \bigg)^{-1} A_j \bigg).
\end{aligned}
\]
Hence, by \eqref{mixed1}
\[
\Phi^j_p(t,\ldots,t)  = \tr( t^{-1} A_j) \le \frac{\epsilon}t 
\qquad\text{for } t>0.
\]
Moreover, $x=(t,\ldots,t)$ lies above the roots of $p$ for any $t>0$.
Take any $t, \delta>0$ such that
\begin{equation}\label{ept}
\frac{\epsilon}t + \frac 1\delta \le 1.
\end{equation}
By Corollary \ref{brc}, $(t+\delta,\ldots,t+\delta)$ lies above the roots of $\prod_{i=1}^m(1-\partial_{z_i})p$. Since
\[
\prod_{i=1}^m(1-\partial_{z_i})p(z,\ldots,z) = \mu[A_1,\ldots, A_m](z) \qquad\text{for any }z\in\C,
\]
the largest root of $\mu[A_1,\ldots,A_m]$ is $\le t+\delta$. Minimizing $t+\delta$ under the constraint \eqref{ept} yields 
$t= \sqrt{\epsilon}+\epsilon$ and $\delta=1+\sqrt{\epsilon}$. Hence, the largest root of $\mu[A_1,\ldots,A_m]$ is bounded by $(1+\epsilon)^2$.
\end{proof}


\subsection{Notes}
The strategy of the proofs of $(MSS)$ and $(MCP)$ follows the original proof of Marcus, Spielman, and Srivastava \cite{MSS} with strong influence by Tao's blog article \cite{Tao}. The main difference is in the proof of Lemma \ref{bar}. Tao uses more elementary properties of real stable polynomials in the form of Lemma \ref{zp}, whereas the original proof uses Helton-Vinnikov's theorem \cite{BB, HV}. This result states that every real stable polynomial in two variables of degree $d$ has a determinantal representation $p(x,y)=\pm \det( xA +y B +C)$ for some $d \times d$ positive semidefinite matrices $A,B$ and a symmetric matrix $C$.

The proof of the special case of Theorem $(MSS)$, for random variables taking at most two values, is more technical and it can be found in \cite{BCMS}. It relies on a variant of Theorem $(MCP)$ for matrices $A_1, \ldots, A_m$ of rank $\le 2$. This corresponds to a determinantal polynomial \eqref{p} which is quadratic with respect to each variable $z_1, \ldots, z_m$. Amazingly, such deceptively simple polynomial encodes all the information about roots of the mixed characteristic polynomial $\mu[A_1,\ldots,A_m]$, which is needed for showing $(MCP)$.

\section{Applications of Weaver's conjecture}

In this section we show applications of the solution of Kadison-Singer problem which are outside of the main sequence of implications $(KS) \Leftarrow \ldots \Leftarrow (MCP)$. Our main goal is to show quantitative bounds in Feichtinger's conjecture. To achieve this we need some background about Naimark's dilation theorem.

\subsection{Naimark's complements of frame partitions} We start with well-known Naimark's dilation theorem.

\begin{lemma}\label{nai}
 Let $\{u_i\}_{i\in I}$ be a Parseval frame in a Hilbert space $\mathcal H$. Then there exists a larger Hilbert space $\mathcal K \supset \mathcal H$ and an o.n. basis $\{e_i\}_{i\in I} \subset \mathcal K$ such that
\[
u_i =Pe_i
\qquad\text{for all } i \in I, \text{ where $P$ is an orthogonal projection of $\mathcal K$ onto $\mathcal H$.}
\]
Conversely, if $P$ is a projection of $\mathcal K$ onto a closed subspace $\mathcal H$, then $\{P e_i\}_{i\in I}$ is a Parseval frame in $\mathcal H$.
\end{lemma}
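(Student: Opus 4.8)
The plan is to realize the abstract Parseval frame $\{u_i\}_{i\in I}$ concretely inside $\ell^2(I)$ via its analysis operator, recognize the relevant coordinate projection there, and then transport everything back to a superspace of $\mathcal H$ by a unitary. First I would recall the analysis operator $T\colon\mathcal H\to\ell^2(I)$, $Tu=(\langle u,u_i\rangle)_{i\in I}$, and its adjoint, the synthesis operator $T^*\colon\ell^2(I)\to\mathcal H$, $T^*(a_i)_{i\in I}=\sum_{i\in I}a_i u_i$; in particular $T^*\delta_i=u_i$, where $\{\delta_i\}_{i\in I}$ denotes the standard o.n.\ basis of $\ell^2(I)$. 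The Parseval hypothesis \eqref{E5} with $A=B=1$ says precisely that the frame operator $S=T^*T=\sum_{i\in I}u_i\otimes u_i$ equals $\mathbf I_{\mathcal H}$, i.e.\ $T$ is an isometry. Hence $T(\mathcal H)$ is a closed subspace of $\ell^2(I)$ (the isometric image of a Hilbert space), and $Q:=TT^*$ is self-adjoint with $Q^2=T(T^*T)T^*=TT^*=Q$; since $TT^*(Th)=T(T^*T)h=Th$, the range of $Q$ is $T(\mathcal H)$, so $Q$ is the orthogonal projection of $\ell^2(I)$ onto $T(\mathcal H)$.

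Next I would manufacture the superspace. Put $\mathcal K=\mathcal H\oplus\bigl(\ell^2(I)\ominus T(\mathcal H)\bigr)$, so that $\mathcal H\subset\mathcal K$ canonically, and define $W\colon\mathcal K\to\ell^2(I)$ by $W(h\oplus\ell)=Th+\ell$. Because $T$ is an isometry onto $T(\mathcal H)$ and the second summand is exactly the orthogonal complement of $T(\mathcal H)$, the map $W$ is unitary, $W|_{\mathcal H}=T$, and $W^{-1}$ restricted to $T(\mathcal H)$ is the inverse of $T\colon\mathcal H\to T(\mathcal H)$. Setting $e_i:=W^{-1}\delta_i$ gives an o.n.\ basis $\{e_i\}_{i\in I}$ of $\mathcal K$ (image of an o.n.\ basis under a unitary), and $P:=W^{-1}QW$ is self-adjoint and idempotent with range $W^{-1}(T(\mathcal H))=\mathcal H$, hence the orthogonal projection of $\mathcal K$ onto $\mathcal H$. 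Finally I would compute $Pe_i=W^{-1}QW(W^{-1}\delta_i)=W^{-1}(TT^*\delta_i)=W^{-1}(Tu_i)=u_i$, which is the forward assertion.

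For the converse, let $P$ be the orthogonal projection of $\mathcal K$ onto the closed subspace $\mathcal H$ and let $\{e_i\}_{i\in I}$ be an o.n.\ basis of $\mathcal K$. For every $u\in\mathcal H$ we have $\langle u,Pe_i\rangle=\langle Pu,e_i\rangle=\langle u,e_i\rangle$, using $P=P^*$ and $Pu=u$. Summing and applying Parseval's identity for the o.n.\ basis $\{e_i\}_{i\in I}$ gives $\sum_{i\in I}|\langle u,Pe_i\rangle|^2=\sum_{i\in I}|\langle u,e_i\rangle|^2=\|u\|^2$, so $\{Pe_i\}_{i\in I}$ is a Parseval frame in $\mathcal H$.

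I do not expect a real obstacle in this argument. The only mildly delicate point is honoring the literal inclusion $\mathcal K\supset\mathcal H$ required by the statement: rather than simply identifying $\mathcal H$ with $T(\mathcal H)\subset\ell^2(I)$, I build $\mathcal K$ as the external direct sum $\mathcal H\oplus(\ell^2(I)\ominus T(\mathcal H))$ and push the standard basis and the coordinate projection $TT^*$ across the unitary $W$; everything else reduces to the one-line computations above.
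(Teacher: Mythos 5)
Your proof is correct and follows essentially the same route as the paper: realize $\mathcal H$ inside $\ell^2(I)$ via the analysis operator (an isometry by the Parseval hypothesis), project the standard basis onto its range, and observe that this recovers the frame vectors, the only difference being that you verify $Q=TT^*$ algebraically and transport everything through an explicit unitary onto $\mathcal H\oplus(\ell^2(I)\ominus T(\mathcal H))$ to honor the literal inclusion $\mathcal K\supset\mathcal H$, whereas the paper identifies $\mathcal H$ with $T(\mathcal H)$ and computes $Qe_{i_0}=Tu_{i_0}$ from the frame expansion. Your explicit one-line proof of the converse (which the paper leaves as an easy remark) is also fine.
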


\begin{proof} Consider the analysis $T: \mathcal H \to \ell^2(I)$ as in Lemma  \ref{kspp}. Since $\{u_i\}_{i\in I}$ is a Parseval frame, $T$ is an isometry of $\mathcal H$ onto $T(\mathcal H) \subset \ell^2(I)$. Let $Q$ be the orthogonal projection of $\ell^2(I)$ onto $T(\mathcal H)$. Let $\{e_i\}_{i\in I}$ be the standard o.n. basis of $\ell^2(I)$. Since $T$ is an isometry, it suffices to show the conclusion for Parseval frame $\{T u_i\}_{i\in I}$ in $T(\mathcal H)$. In turn, this is a consequence of the following calculation. Since $\{T u_i\}_{i\in I}$ is a Parseval frame
\[
Qa= \sum_{i\in I} \langle a, T u_i \rangle T u_i \qquad\text{for all }a \in \ell^2(I).
\]
Thus, for any $i\in I_0$,
\[
Q e_{i_0} = \sum_{i\in I} \lan e_{i_0}, T u_i \ran T u_i = \sum_{i\in I} \overline{ \lan u_i, u_{i_0} \ran} T u_i = T \bigg( \sum_{i\in I} \lan u_{i_0}, u_i \ran u_i \bigg) = T u_{i_0}.
\]
\end{proof}

Lemma \ref{nai} leads to the concept of a Naimark's complement. This is a Parseval frame $\{(\mathbf I - P)e_i \}_{i\in I}$ in $\mathcal K \ominus \mathcal H$, where $\mathbf I$ is the identity on $\mathcal K \cong \ell^2(I)$.
Recall the definition of a Riesz sequence.

\begin{definition}\label{def2}
 A family of vectors $\{u_i\}_{i\in I}$ in a Hilbert space $\mathcal H$
is a {\it Riesz sequence} if there are constants $A,B>0$ so that
for all $\{a_i\}\in \ell^2(I)$ we have
\begin{equation}\label{R7}
A\sum_{i\in I}|a_i |^2 \le \bigg\|\sum_{i\in I}a_i u_i \bigg\|^2 \le
B\sum_{i\in I}|a_i|^2.
\end{equation}
We call $A,B$ {\it lower and upper Riesz
bounds} for $\{u_i\}_{i\in I}$.
\end{definition}

Note that it suffices to verify \eqref{R7} only for sequences $\{a_i\}$ with finitely many non-zero coefficients, since a standard convergence argument yields the same bounds \eqref{R7} for all infinitely supported sequences $\{a_i\}\in \ell^2(I)$.
In general we do not require that frame, Bessel, and Riesz bounds in Definitions \ref{def1} and \ref{def2} are optimal. In particular, a Bessel sequence with bound $B$ is automatically a Bessel sequence with bound $B' \ge B$.

\begin{lemma}\label{ce} Let $P:\ell^2(I) \to \ell^2(I)$ be the orthogonal projection onto a closed subspace $\mathcal H \subset \ell^2(I)$. Then, for any subset $J \subset I$ and $\delta>0$, the following are equivalent:
\begin{enumerate}[(i)]
\item
$\{ P e_i \}_{i \in J}$ is a Bessel sequence with bound $1-\delta$,
\item
$\{ (\mathbf I - P) e_i \}_{i \in J}$ is a Riesz sequence with lower bound $\delta$, where $\mathbf I$ is the identity on $\ell^2(I)$.
\end{enumerate}
\end{lemma}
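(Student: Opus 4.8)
The plan is to compute both conditions in terms of the same finitely-supported coefficient vector $a = (a_i)_{i\in J}$ and observe that they are literally complementary. Write $e_J = \sum_{i\in J} a_i e_i \in \ell^2(I)$ for such an $a$. The key identity is the Pythagorean decomposition $\|e_J\|^2 = \|P e_J\|^2 + \|(\mathbf I - P)e_J\|^2$, i.e.
\[
\sum_{i\in J} |a_i|^2 = \Big\| \sum_{i\in J} a_i P e_i \Big\|^2 + \Big\| \sum_{i\in J} a_i (\mathbf I - P) e_i \Big\|^2 .
\]
So I would first record this, then translate each side into the language of the relevant definitions.

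For the Bessel condition on $\{Pe_i\}_{i\in J}$: since $P$ is self-adjoint and idempotent, $\sum_{i\in J}|\langle u, Pe_i\rangle|^2 = \sum_{i\in J}|\langle Pu, e_i\rangle|^2 = \|P_J Pu\|^2$ where $P_J$ is the coordinate projection onto $\overline{\spa}\{e_i : i \in J\}$; equivalently, by the standard duality between analysis and synthesis operators, $\{Pe_i\}_{i\in J}$ is Bessel with bound $1-\delta$ if and only if $\|\sum_{i\in J} a_i Pe_i\|^2 \le (1-\delta)\sum_{i\in J}|a_i|^2$ for all finitely-supported $a$. (This is just the statement that a synthesis operator and its adjoint analysis operator have equal norm, as already used in Lemma~\ref{kspp}; I would invoke it rather than reprove it.) Subtracting this inequality from the Pythagorean identity immediately gives
\[
\Big\| \sum_{i\in J} a_i (\mathbf I - P) e_i \Big\|^2 \ge \delta \sum_{i\in J} |a_i|^2 ,
\]
which is precisely the lower Riesz bound $\delta$ for $\{(\mathbf I - P)e_i\}_{i\in J}$. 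Conversely, starting from the lower Riesz bound and subtracting from the Pythagorean identity yields the Bessel bound $1-\delta$. One also needs the trivial remark, already noted after Definition~\ref{def2}, that it suffices to test these inequalities on finitely-supported $a$; and one should note that $\{(\mathbf I-P)e_i\}_{i\in J}$ always has upper Riesz bound $1$ (since $\|(\mathbf I-P)e_J\|\le\|e_J\|$), so the Riesz-sequence hypothesis is genuinely a one-sided condition here and the equivalence is clean.

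I do not expect a serious obstacle; the only mild subtlety is bookkeeping between the two equivalent phrasings of "Bessel with bound $1-\delta$" (as a frame-type inequality $\sum|\langle u,Pe_i\rangle|^2 \le (1-\delta)\|u\|^2$ versus the synthesis-side inequality on coefficients), and making sure the equal-norms fact is cited correctly. Once both sides are expressed through the coefficient vector $a$, the equivalence is a one-line subtraction.
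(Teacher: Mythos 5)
Your proposal is correct and follows essentially the same route as the paper: the Pythagorean identity $\sum_{i\in J}|a_i|^2 = \|\sum_{i\in J} a_i Pe_i\|^2 + \|\sum_{i\in J} a_i(\mathbf I - P)e_i\|^2$ for finitely supported coefficients, combined with the equality $\|T\|=\|T^*\|$ to rephrase the Bessel bound as a synthesis-side inequality, after which the equivalence is a one-line subtraction. The only additions (the trivial upper Riesz bound $1$ and the reduction to finitely supported sequences) are consistent with remarks already made in the paper.
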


\begin{proof}
Note that for any sequence of coefficients $\{a_i\} \in \ell^2(J)$,
\begin{equation}\label{ce3}
\sum_{i\in J} |a_i|^2 = \bigg\| \sum_{i\in J} a_i P e_i \bigg\|^2 + \bigg\|\sum_{i\in J} a_i (\mathbf I - P) e_i \bigg\|^2.
\end{equation}
Thus,
\begin{equation}\label{ce4}
\bigg\| \sum_{i\in J} a_i P e_i \bigg\|^2 \le (1-\delta) \sum_{i\in J} |a_i|^2 
\iff
\bigg\| \sum_{i\in J} a_i (\mathbf I - P) e_i \bigg\|^2 \ge \delta \sum_{i\in J} |a_i|^2.
\end{equation}
Observe that the inequality in the left hand side of \eqref{ce4} is equivalent to (i). This follows from the well-known fact that $||T||=||T^*||$, where $T$ is the analysis operator
\[
T: \mathcal H \to \ell^2(I), \qquad T \phi = \{\lan u, P e_i \ran\}_{i\in J}, \quad u \in \mathcal H,
\]
and its adjoint is the synthesis operator
\[
T^*: \ell^2(I) \to \mathcal H, \qquad T^*(\{a_i\}_{i\in J}) = \sum_{i\in J} a_i Pe_i, \quad \{a_i\}_{i\in J} \in \ell^2(J).
\]
This yields the equivalence of (i) and (ii).
\end{proof}

\subsection{The Feichtinger conjecture} We are now ready to formulate the quantitative version of the Feichtinger conjecture which was shown by Casazza, Marcus, Speegle, and the author \cite{BCMS}.

\begin{theorem*}[$FEI$] \label{main2} 
Let $I$ be at most countable set and let $\mathcal H$ be  a separable Hilbert space. Suppose $\{u_i\}_{i\in I}$ is a Bessel sequence in $\mathcal H$ with bound $1$ that consists of vectors of norms $\|u_i\|^2 \geq \ve$, where $\ve>0$. 
Then there exists a universal constant $C>0$, such that $I$ can be partitioned into $r \leq C/\ve$ subsets $I_1, \ldots, I_r$, such that each subfamily $\{u_i\}_{i\in I_j}$, $j=1,\ldots, r$, is a Riesz sequence.
\end{theorem*}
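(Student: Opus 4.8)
The plan is to deduce $(FEI)$ from $(KS^\infty_r)$ by combining Naimark's dilation theorem with Lemma~\ref{ce}. First I would normalize the situation: since $\{u_i\}_{i\in I}$ is Bessel with bound $1$, the analysis operator $T$ maps $\mathcal H$ isometrically-up-to-bound-$1$ into $\ell^2(I)$, and the Gram matrix $G = (\langle u_i, u_j\rangle)_{i,j\in I}$ is a positive contraction on $\ell^2(I)$. The trick is to pass to a Parseval frame: the operator $\mathbf I - G$ on $\ell^2(I)$ is positive semidefinite, so one can realize it as a Gram matrix of a second Bessel system $\{w_i\}_{i\in I}$ in some auxiliary Hilbert space (or, more cleanly, dilate directly). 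Concretely, consider the system $\{\tilde u_i\}_{i\in I}$ in $\mathcal H \oplus \mathcal H'$ given by $\tilde u_i = u_i \oplus w_i$ where $\{w_i\}$ is chosen so that $\sum_i \langle u, w_i\rangle w_i = u$ minus the $\mathcal H$-part; this makes $\{\tilde u_i\}$ a Parseval frame whose Gram matrix is a genuine projection $P$ on $\ell^2(I)$ with $\langle \tilde u_i, \tilde u_i\rangle = P_{ii}$, and crucially $P_{ii} = \|u_i\|^2 + \|w_i\|^2 \le$ something controlled — but here I must be careful about the upper bound on the diagonal of $P$.

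The key point is that we want the \emph{complementary} projection $\mathbf I - P$ to have small diagonal entries so that $(KS^\infty_r)$ applies to $\{(\mathbf I - P)e_i\}$ and Lemma~\ref{ce} then turns the resulting Bessel partition into a Riesz partition of $\{(\mathbf I - P)e_i\}$ — but that is the wrong system. So instead I would run the argument on $P$ itself: the diagonal entries of $P$ are $\|\tilde u_i\|^2$, and I want these to be close to $1$ (equivalently $(\mathbf I - P)$ has small diagonal $1 - \|\tilde u_i\|^2$) \emph{provided} $\|u_i\|^2 \ge \ve$ is close to $1$. This only works when $\ve$ is near $1$, so the correct move is a rescaling: replace $u_i$ by $u_i/\|u_i\|$ after first grouping indices by dyadic size of $\|u_i\|^2 \in [\ve, 1]$ — but actually the cleanest route, following \cite{BCMS}, is: apply $(KS^\infty_r)$ to the Naimark complement of the \emph{normalized Parseval frame} $\{P e_i\}$ where now the diagonal of $\mathbf I - P$ equals $1 - \|Pe_i\|^2$, choose $r$ with $(1/\sqrt r + \sqrt{1-\ve})^2 < \ve$ — wait, one needs $1-\ve$ small. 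The honest statement: one partitions so that on each block $\{(\mathbf I-P)e_i\}_{i\in I_k}$ is Bessel with bound $< \ve$ hence $\{Pe_i\}_{i\in I_k}$ — no. Let me restate the strategy as: dilate $\{u_i\}$ (Bessel bound $1$, so $G \le \mathbf I$) to an o.n.\ system and a projection $P$ with $Pe_i \leftrightarrow u_i$; then $(\mathbf I - P)e_i$ has norm-squared $1 - \|u_i\|^2 \le 1 - \ve$. Apply $(KS^\infty_r)$ to $\{(\mathbf I - P)e_i\}$ with $\delta = 1-\ve$ — this gives nothing if $1-\ve$ is not small. The resolution in \cite{BCMS} is to split each vector into $N \approx 1/\ve$ copies scaled by $1/\sqrt N$, making the relevant $\delta$ of order $\ve \cdot (1/\ve) \cdot$(new norm)$= O(1)$...

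Given this delicacy, I would structure the actual proof as: (1) Use Naimark (Lemma~\ref{nai}) and Lemma~\ref{ce} to reduce $(FEI)$ to showing that $\{(\mathbf I-P)e_i\}_{i\in I}$, which is Bessel with $\|(\mathbf I-P)e_i\|^2 \le 1-\ve$, can be partitioned into $r \le C/\ve$ pieces each of which is Bessel with bound $\le 1 - \ve/2$ (say), since then each $\{Pe_i\}_{i\in I_j}$ has lower Riesz bound $\ge \ve/2$ and upper bound $\le 1$. (2) Observe $\mathbf I - P$ is a projection with diagonal entries $\le 1-\ve$; to make these entries \emph{small} rather than just bounded away from $1$, replace each index $i$ by $n_i := \lceil 1/\ve \rceil$ copies and each vector by $(\mathbf I-P)e_i/\sqrt{n_i}$, producing a new Parseval-type system whose diagonal entries are $\le (1-\ve)/n_i \le \ve$, so $(KS^\infty_r)$ applies with $\delta = \ve$; choosing $r = r(\ve)$ bounded and unpacking the copies yields the partition into $\le C/\ve$ blocks. (3) Translate back through Lemma~\ref{ce} to get Riesz sequences. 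The main obstacle I expect is step (2): correctly bookkeeping the "splitting into $1/\ve$ copies" trick so that the total number of resulting blocks is $O(1/\ve)$ rather than, say, $O(1/\ve^2)$, and verifying that regrouping copies of the same original index $i$ back together preserves the Bessel/Riesz bounds with only a constant loss — this is where the universal constant $C$ is pinned down, and it is the one genuinely non-formal point in the argument.
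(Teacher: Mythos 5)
Your reduction via Naimark dilation and Lemma \ref{ce} is the right framework, and you correctly diagnose the central obstruction: the complement vectors $(\mathbf I-P)e_i$ only satisfy $\|(\mathbf I-P)e_i\|^2\le 1-\ve$, which is not small, so $(KS^\infty_r)$ applied to them directly gives nothing. But your proposed fix in step (2) --- splitting each complement vector into $n\approx 1/\ve$ copies scaled by $1/\sqrt n$ and applying $(KS^\infty_r)$ with $\delta=\ve$ --- does not close the gap, and the failure is not mere bookkeeping. Weaver partitions the \emph{copies}; to obtain a partition of $I$ you must assign each original index to a single block, and then the full rank-one term $v_iv_i^*$ replaces the few copies of it that landed there. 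By pigeonhole you can only guarantee that at least $n/r$ copies of $v_i$ lie in its assigned block, so the regrouped block operator is bounded only by $r$ times the Weaver bound, i.e.\ by $r\left(1/\sqrt r+\sqrt{\delta'}\right)^2=\left(1+\sqrt{r\delta'}\right)^2\ge 1$. What you need via Lemma \ref{ce} is a Bessel bound for each complement block that is \emph{strictly below} $1$ by a margin of order $\ve$; a bound $\ge 1$ yields no lower Riesz bound at all. (Singleton blocks trivially achieve bound $1-\ve$, so the entire content of $(FEI)$ is to beat $1$ with only $O(1/\ve)$ blocks --- exactly what the splitting/regrouping trick cannot deliver.)

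The paper's proof runs the two steps in the opposite order. First it normalizes to $\|u_i\|^2=\ve$ and applies $(KS_r)$ to the \emph{original} system with $r\sim 9\ve^{-1}(0.92/0.08)^2$, producing blocks that are Bessel with bound $B=\left(1/\sqrt r+\sqrt\ve\right)^2$ satisfying $\ve\ge 0.92\,B$; that is, within each block the vectors are large relative to the block's Bessel bound. Only then does it run your Naimark-complement argument inside each block: after rescaling, each finite Bessel block is extended to a Parseval frame whose vectors still have norm at least $0.92$ of the frame bound (this needs the Schur--Horn-type Lemma \ref{major}, a point your dilation sketch glosses over, since a naive dilation does not control the adjoined norms), so the Naimark complement has norms $\le 0.08$ and $(KS_2^\infty)$ together with Lemma \ref{ce} splits the block into two Riesz sequences with lower bound about $\ve/50$. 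This yields $2r=O(1/\ve)$ pieces, and the infinite case follows from the pinball principle (Lemma \ref{pinball}) applied to finite sections. So the missing idea is to use Weaver once on the original system to drive the Bessel bound down to $O(\ve)$ --- which is where the $C/\ve$ count comes from --- before taking Naimark complements, rather than trying to shrink the complement's diagonal by splitting vectors.
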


In the proof of Theorem $(FEI)$ we shall use the following adaptation of the Schur-Horn theorem for Riesz sequences.

\begin{lemma}\label{major}
Let $S$ be a positive semi-defnite  $M\times M$ matrix with eigenvalues $\lambda_{1} \ge \ldots \ge \lambda_M \ge 0$. Let $d_1\ge \ldots \ge d_M \ge 0$ be such that 
\begin{equation}\label{major1}
\sum_{i=1}^{M}d_{i} = \sum_{i=1}^{M} \lambda_{i}\quad\text{and}\quad\sum_{i=1}^{k}d_{i}\leq \sum_{i=1}^{k}\lambda_{i}\quad\text{for all } 1\leq k\leq M.
\end{equation}
Then there exists a collection of vectors $\{v_{i}\}_{i=1}^{M}$ in $\C^M$ such its frame operator is $S$ and $\|v_{i}\|^{2} = d_{i}$ for all $i=1,\ldots,M$.
\end{lemma}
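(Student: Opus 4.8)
\textbf{Proof proposal for Lemma \ref{major}.}

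The plan is to deduce this from the classical Schur--Horn theorem, which states that if $\lambda_1 \ge \ldots \ge \lambda_M$ and $d_1 \ge \ldots \ge d_M$ satisfy the majorization condition \eqref{major1}, then there is a Hermitian matrix with eigenvalues $(\lambda_i)$ and diagonal $(d_i)$. First I would apply Schur--Horn to produce a positive semidefinite $M \times M$ matrix $G$ that is unitarily equivalent to $S$ (hence has eigenvalues $\lambda_1 \ge \ldots \ge \lambda_M$) and has diagonal entries $G_{ii} = d_i$; concretely, if $S = W^* \operatorname{diag}(\lambda_1,\ldots,\lambda_M) W$ for a unitary $W$, and $U$ is the unitary furnished by Schur--Horn with $U^* \operatorname{diag}(\lambda_i) U$ having diagonal $(d_i)$, then $G = U^* \operatorname{diag}(\lambda_i) U$ works and $S = V^* G V$ with $V = W^* U$.

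Next I would take a positive semidefinite square root: write $G = R^* R$ where $R$ is $M \times M$ (for instance $R = G^{1/2}$), and let $v_i \in \C^M$ be the $i$-th column of $R$, so that $G = (\langle v_j, v_i\rangle)_{i,j}$ is the Gram matrix of $\{v_i\}_{i=1}^M$. Then $\|v_i\|^2 = \langle v_i, v_i \rangle = G_{ii} = d_i$, giving the prescribed norms. For the frame operator: the frame operator of $\{v_i\}$ is $\sum_i v_i v_i^* = R R^*$, which is $M \times M$, positive semidefinite, and has the same nonzero eigenvalues as $R^* R = G$, hence the same eigenvalues as $S$ (including multiplicity of zero, since both are $M \times M$). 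So $R R^*$ is unitarily equivalent to $S$; replacing each $v_i$ by $\Omega v_i$ for the appropriate unitary $\Omega$ with $\Omega (R R^*) \Omega^* = S$ leaves the Gram matrix — and hence all the norms $\|v_i\|^2 = d_i$ — unchanged while making the frame operator exactly $S$.

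The main obstacle, or rather the only real ingredient, is the Schur--Horn theorem itself; everything else is the routine dictionary between Gram matrices, frame operators, and square roots (the fact $\|TT^*\| = \|T^*T\|$, already used in the excerpt, is the norm-level shadow of this). One should be slightly careful that Schur--Horn is being applied in the direction ``majorization $\Rightarrow$ existence of a matrix with given spectrum and diagonal,'' and that positive semidefiniteness of the resulting $G$ is automatic since its eigenvalues are the $\lambda_i \ge 0$. I would also remark that this is exactly the finite-dimensional statement needed; no infinite-dimensional Schur--Horn (a la Kadison) is required here.
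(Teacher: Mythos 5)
Your proposal is correct and follows essentially the same route as the paper: both invoke the classical Schur--Horn theorem in the direction ``majorization $\Rightarrow$ matrix with prescribed spectrum and diagonal'' and then build the vectors from a positive square root together with the unitary conjugating that matrix to $S$. Indeed, with $\Omega$ chosen as the unitary $U$ satisfying $U G U^* = S$, your vectors $\Omega G^{1/2} e_i$ coincide with the paper's $S^{1/2} U e_i$, so the only difference is that you verify the frame operator after a final conjugation rather than building it to be $S$ from the start.
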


Lemma \ref{major} has a converse, which states that the norms of $\{v_{i}\}_{i=1}^{M}$ and eigenvalues of its frame operator must satisfy \eqref{major1}. Since we will not need this, we simply omit the proof of the converse result.

\begin{proof} By the Schur-Horn theorem, there exists a hermitian matrix $\tilde S$ with eigenvalues $\lambda_{1} \ge \ldots \ge \lambda_M$ and diagonal $d_1\ge \ldots \ge d_M$. Since $S$ and $\tilde S$ are unitarily equivalent, there exists a unitary $M\times M$ matrix $U$ such that $\tilde S= U^* S U$. Define vectors $v_i=S^{1/2} U e_i$, where $e_i$, $i\in [M]$, are standard basis vectors in $\C^M$. Then, 
\[
||v_i||^2= \lan S^{1/2} U e_i ,  S^{1/2} U e_i \ran = \lan S U e_i, U e_i \ran = \lan \tilde S e_i, e_i \ran = d_i.
\]
Moreover, the frame operator of $\{v_{i}\}_{i=1}^{M}$ satisfies for $v\in \C^M$,
\begin{multline*}
\bigg( \sum_{i=1}^M v_i \otimes v_i \bigg)(v)=
\sum_{i=1}^M (S^{1/2} Ue_i) \otimes (S^{1/2} U e_i) (v) = \sum_{i=1}^M \lan v, S^{1/2} U e_i \ran S^{1/2} U e_i 
\\
=  S^{1/2} \bigg(\sum_{i=1}^M \lan  S^{1/2} v,  U e_i \ran U e_i \bigg) = S^{1/2} S^{1/2} v =Sv.
\end{multline*}
The penultimate step follows from the fact that $\{U e_i\}_{i=1}^M$ is an o.n. basis.
\end{proof}

We start from the special case of Theorem $(FEI)$ and then show its increasingly general versions.

\begin{lemma}\label{ves}
Theorem $(FEI)$ holds under the additional assumption that $I$ is finite and $\ve= 0.92$. In this case, a Bessel sequence $\{u_i\}_{i\in I}$ with bound $1$ and $\|u_i\|^2 \geq \ve$ can be partitioned into two Riesz sequences with lower bound $0.02$.
\end{lemma}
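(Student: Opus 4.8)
The plan is to transport the problem to the Gram operator of $\{u_i\}_{i\in I}$ and then apply Weaver's theorem $(KS^\infty_r)$ with $r=2$ to the \emph{complementary} system; the point is that the hypothesis $\|u_i\|^2\ge\ve=0.92$ makes $\{u_i\}$ nearly orthonormal, so its complement has small norms and Weaver becomes applicable.

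First I would set up the Gram picture. Since $I$ is finite, put $N=|I|$ and let $G=(\langle u_i,u_j\rangle)_{i,j\in I}=TT^*$ be the Gram matrix, where $T$ is the analysis operator. As $\|TT^*\|=\|T^*T\|$, the Bessel bound $1$ gives $0\le G\le\mathbf I_N$, while $G_{ii}=\|u_i\|^2\ge\ve$. For $J\subset I$ and $(a_i)_{i\in J}$ one has $\big\|\sum_{i\in J}a_iu_i\big\|^2=\langle G_J a,a\rangle$ with $G_J$ the principal submatrix on $J$; hence $\{u_i\}_{i\in J}$ is a Riesz sequence with lower bound $A$ (and upper bound $\|G\|\le1$) exactly when $G_J\ge A\,\mathbf I$ on $\C^J$, i.e.\ $\|P_J(\mathbf I_N-G)P_J\|\le 1-A$, where $P_J$ is the coordinate projection onto $\C^J$. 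So it suffices to find a partition $I=I_1\sqcup I_2$ with $\|P_{I_k}(\mathbf I_N-G)P_{I_k}\|\le 0.98$ for $k=1,2$.

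Next, set $M=\mathbf I_N-G$, so $0\le M\le\mathbf I_N$ and $M_{ii}=1-\|u_i\|^2\le 1-\ve=0.08=:\delta$. Put $v_i=M^{1/2}e_i\in\C^N$, where $\{e_i\}$ is the standard basis. Then $\sum_{i\in I}v_iv_i^*=M^{1/2}\big(\sum_i e_ie_i^*\big)M^{1/2}=M\le\mathbf I_N$, so $\{v_i\}_{i\in I}$ is a Bessel sequence in $\C^N$ with bound $1$ and $\|v_i\|^2=M_{ii}\le\delta$. Since $(KS^\infty_r)$ is stated for Bessel sequences of bound $1$, applying it with $r=2$ gives a partition $I=I_1\sqcup I_2$ with $\big\|\sum_{i\in I_k}v_iv_i^*\big\|\le\big(\frac{1}{\sqrt2}+\sqrt\delta\big)^2$; for $\delta=0.08$ this equals exactly $\frac12+2\sqrt{0.04}+0.08=0.98$. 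Finally $\sum_{i\in I_k}v_iv_i^*=M^{1/2}P_{I_k}M^{1/2}$, and applying $\|A^*A\|=\|AA^*\|$ to $A=P_{I_k}M^{1/2}$ yields $\|M^{1/2}P_{I_k}M^{1/2}\|=\|P_{I_k}M^{1/2}\|^2=\|P_{I_k}MP_{I_k}\|$. Thus $\|P_{I_k}(\mathbf I_N-G)P_{I_k}\|\le0.98$, and by the reduction above each $\{u_i\}_{i\in I_k}$ is a Riesz sequence with lower bound $0.02$.

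The hard part is really just spotting the right object: the problem has to be read through the complementary operator $M=\mathbf I_N-G$, where ``large norms $\ge0.92$'' turn into ``small norms $\le 0.08$'', and one must notice that the quantity $\big\|\sum_{i\in I_k}v_iv_i^*\big\|=\|M^{1/2}P_{I_k}M^{1/2}\|$ that Weaver bounds is \emph{identical} to $\|P_{I_k}MP_{I_k}\|$, which controls the Riesz lower bound. One should also check that $\ve=0.92$ is exactly the threshold making $\big(\frac{1}{\sqrt2}+\sqrt{1-\ve}\big)^2=0.98$, so that the lower bound comes out to $0.02$; everything else is routine operator algebra. (In the special case where $\{u_i\}$ is itself a Parseval frame one could alternatively pass to the Naimark complement via Lemma~\ref{nai} and apply $(PP_\delta)$ together with Lemma~\ref{ce}, which even produces the better constant $\tfrac12-\sqrt{2\delta(1-2\delta)}\approx0.133$; but that argument does not cover the general Bessel hypothesis, which is why the route through $(KS^\infty_r)$ is the one to use here.)
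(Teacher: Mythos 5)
Your argument is correct, and it reaches the same constants as the paper by a genuinely different route. The paper first treats the case where $\{u_i\}$ is a Parseval frame: it realizes $u_i=Pe_i$ via Naimark's dilation (Lemma \ref{nai}), applies the two-sided statement $(KS_2^\infty)$ to the Naimark complement $\{(\mathbf I-P)e_i\}$, whose vectors have norms $\le 1-\ve=0.08$, and converts the resulting Bessel bound $0.98$ into the Riesz lower bound $0.02$ through Lemma \ref{ce}; it then reduces a general Bessel sequence to the Parseval case by enlarging the ambient space and adjoining vectors of norm $\ge\ve$ so as to complete $\{u_i\}$ to a Parseval frame, which is exactly where the Schur--Horn-type Lemma \ref{major} and the majorization argument are needed. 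You instead work entirely at the level of the Gram matrix: since $(KS^\infty_r)$ requires only a Bessel bound (not Parsevalness), the ``complement'' system $v_i=(\mathbf I-G)^{1/2}e_i$ is available for any Bessel sequence with bound $1$, the norm identity $\|M^{1/2}P_{I_k}M^{1/2}\|=\|P_{I_k}MP_{I_k}\|$ plays precisely the role that Lemma \ref{ce} plays in the paper, and the threshold computation $\bigl(\tfrac{1}{\sqrt2}+\sqrt{0.08}\bigr)^2=0.98$ is the same one. The net effect is that the Parseval-completion step, and with it Lemma \ref{major}, becomes unnecessary for this lemma: your square-root complement generalizes the Naimark complement (the two coincide when $G$ is a projection) and handles the general Bessel hypothesis in one stroke. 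What the paper's longer route buys is the explicit frame-theoretic machinery (Naimark complements, completion of frames with prescribed norms), which it develops for its own interest; your version is shorter and more self-contained, at the cost of phrasing the conclusion through principal submatrices of $\mathbf I-G$ rather than through frame-theoretic lemmas already in the text.
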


In light of Remark \ref{rem}, the value of $0.92$ can be replaced by any number $>3/4$, but we are not after best constants here. 

\begin{proof}
Assume momentarily that  $\{u_i\}_{i\in I}$ is a Parseval frame in a finite dimensional Hilbert space $\mathcal H$.
By Lemma \ref{nai} we can imbed $\mathcal H$ into $\ell^2(I)$ such that $u_i=Pe_i$, $i\in I$, where $P$ is an orthogonal projection of $\ell^2(I)$ onto $\mathcal H$. Then, vectors $v_i = (\mathbf I - P)e_i$, $i\in I$, form a Parseval frame in $\ell^2(I) \ominus \mathcal H$. Since $||u_i||^2 \ge \ve$, we have $||v_i||^2 \le 1-\ve < \delta: = 0.08$. By Theorem $(KS_2^\infty)$ we can find a subset $J \subset I$, such that both $\{v_i\}_{i\in J}$ and $\{v_i \}_{i\in I \setminus J}$ are Bessel sequences with bound $\frac12 + \sqrt{2\delta} + \delta = 0.98$. Thus, by Lemma \ref{ce}, both $\{u_i\}_{i\in J}$ and $\{u_i \}_{i\in I \setminus J}$ are Riesz sequences with lower bound $1-0.98=0.02$. 

Assume now that $\{u_i\}_{i\in I}$ is a Bessel sequence with bound $1$ and $\|u_i\|^2 \geq \ve$. Since $I$ is finite, we can assume that $I=[n]$ and $\mathcal H=\C^d$. By increasing the dimension of the ambient space, we claim that a Bessel sequence $\{u_i\}_{i\in [n]}$ can be extended to a Parseval frame by adjoining some collection of vectors $\{u_{n+i}\}_{i=1}^{d+N}$ in $\C^{d+N}$ satisfying $||u_i|| \ge \ve$, where $N$ is sufficiently large.

Indeed, suppose that the frame operator of $\{u_i\}_{i=1}^n$, which acts on $\C^d$,
has eigenvalues $1\ge \lambda_1 \ge \lambda_2 \ge \ldots \ge \lambda_d \ge 0$. For a fixed $N$, consider an operator  on $\C^{d+N}$,
\[
\tilde S = \mathbf I_{d+N} - S \oplus \mathbf 0_N ,\qquad\text{where $\mathbf 0_N$ is the zero operator on $\C^N$.}
\]
Then, $\tilde S$ has the following eigenvalues listed in decreasing order 
\begin{equation}\label{PM2}
\underbrace{1,\ldots,1}_{N},1-\lambda_d, \ldots, 1-\lambda_1.
\end{equation}
Thus, we need to show the existence of vectors $\{u_{n+i}\}_{i=1}^{d+N}$ in $\C^{d+N}$ such that:
\begin{enumerate}[(i)]
\item its frame operator is $\tilde S$, and
\item
$\|u_{n+i} \|^2=C$ for all $i=1,\ldots, d+N$ for some constant $C \in [\ve,1]$.
\end{enumerate} 
By Lemma \ref{major}, this is possible provided eigenvalue sequence \eqref{PM2} majorizes, in the sense of \eqref{major1}, the sequence
\begin{equation}\label{PM3}
\underbrace{C,\ldots,C}_{d+N}.
\end{equation}
However, the majorization \eqref{PM2} is automatic for the constant sequence \eqref{PM3} provided that
\begin{equation*}
(d+N)C=N+\sum_{i=1}^d (1-\lambda_i).
\end{equation*}
Thus, by choosing sufficiently large $N$, we have $C\ge \ve$, which shows the claim.
Now, we apply the previous argument for a Parseval frame $\{u_i\}_{i=1}^{n+d+N}$. Hence, we can find a partition into two Riesz sequences. Restricting this partition to the original sequence $\{u_i\}_{i=1}^{n}$ yields the same conclusion.
\end{proof}

\begin{lemma}\label{ves2}
Theorem $(FEI)$ holds under the assumption that $I$ is finite and $||u_i||^2=\ve>0$ for all $i$. In this case, a Bessel sequence $\{u_i\}_{i\in I}$ with bound $1$ can be partitioned into two Riesz sequences with bounds $\ve/50$ and $\ve/0.92$.
\end{lemma}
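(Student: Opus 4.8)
\emph{Proof proposal.} The plan is to reduce the equal-norm case to Lemma~\ref{ves} by rescaling the vectors so that their squared norms sit exactly at the threshold $0.92$ appearing there. Put $c=\sqrt{0.92/\ve}$ and $\hat u_i=c\,u_i$; then $\|\hat u_i\|^2=0.92$ for all $i$, while $\{\hat u_i\}_{i\in I}$ is a Bessel sequence with bound $c^2=0.92/\ve$. When $\ve<0.92$ this bound exceeds $1$, so Lemma~\ref{ves} cannot be applied to $\{\hat u_i\}$ directly, and the heart of the argument is to first bring the Bessel bound back down to $1$ without disturbing the norms.

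For this I would invoke Weaver's conjecture on the \emph{original} sequence. Since $\{u_i\}_{i\in I}$ is Bessel with bound $1$ and $\|u_i\|^2=\ve$, Theorem~$(KS^\infty_r)$ applied with $\delta=\ve$ produces, for every $r\in\N$, a partition $I=I_1\cup\cdots\cup I_r$ such that each $\{u_i\}_{i\in I_k}$ is Bessel with bound $(1/\sqrt r+\sqrt\ve)^2$; after rescaling, each $\{\hat u_i\}_{i\in I_k}$ is then Bessel with bound $(0.92/\ve)(1/\sqrt r+\sqrt\ve)^2$. Choosing $r=\lceil C_0/\ve\rceil$ for a suitable absolute constant $C_0$ (one may take $C_0=552$) makes this at most $1$: indeed then $1/\sqrt r\le\sqrt\ve\,(1/\sqrt{0.92}-1)$, so $1/\sqrt r+\sqrt\ve\le\sqrt{\ve/0.92}$, and hence $(0.92/\ve)(1/\sqrt r+\sqrt\ve)^2\le(0.92/\ve)(\ve/0.92)=1$.

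Now each block $\{\hat u_i\}_{i\in I_k}$ is finite, is Bessel with bound $1$, and satisfies $\|\hat u_i\|^2=0.92\ge0.92$, so Lemma~\ref{ves} applies and splits $I_k$ into $I_k'\cup I_k''$ with $\{\hat u_i\}_{i\in I_k'}$ and $\{\hat u_i\}_{i\in I_k''}$ Riesz sequences of lower bound $0.02$; their upper Riesz bound is at most the Bessel bound of the block, hence at most $1$. Undoing the rescaling ($u_i=c^{-1}\hat u_i$, which multiplies every Riesz bound by $c^{-2}=\ve/0.92$) shows that each of the sets $\{u_i\}_{i\in I_k'}$, $\{u_i\}_{i\in I_k''}$ is a Riesz sequence with lower bound $0.02\cdot\ve/0.92=\ve/46\ge\ve/50$ and upper bound at most $\ve/0.92$. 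This exhibits $I$ as a union of $2r\le C/\ve$ Riesz sequences all of whose bounds lie between $\ve/50$ and $\ve/0.92$, the quantitative form required in Theorem~$(FEI)$; Lemma~\ref{ves} is precisely the degenerate case $\ve=0.92$, in which the preliminary subdivision is vacuous and two pieces already suffice.

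The step I expect to be the main obstacle is exactly this preliminary subdivision. Lemma~\ref{ves} demands simultaneously a Bessel bound at most $1$ and a lower bound at least $0.92$ on the squared norms, and for a general $\ve$-norm Bessel sequence these two requirements cannot be met by scaling alone, since scaling the norms up to $0.92$ multiplies the Bessel bound by $0.92/\ve>1$. The role of Weaver's conjecture $(KS^\infty_r)$ is precisely to trade a large Bessel bound for a partition into $\Theta(1/\ve)$ pieces, on each of which the rescaled norms land at the threshold $0.92$ where Lemma~\ref{ves} bites; tracking the constant $C_0$ through these estimates is the only genuinely computational part.
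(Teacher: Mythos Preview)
Your proposal is correct and follows essentially the same route as the paper: apply Weaver's $(KS_r)$ to split $I$ into $r=O(1/\ve)$ blocks whose Bessel bound is at most $\ve/0.92$, then invoke Lemma~\ref{ves} on each block and read off the Riesz bounds $\ve/50$ and $\ve/0.92$. The only cosmetic difference is that you perform an explicit rescaling by $c=\sqrt{0.92/\ve}$ so as to quote Lemma~\ref{ves} verbatim, whereas the paper instead states a scaled version of Lemma~\ref{ves} (Bessel bound $B$, norms $\ge 0.92B$, Riesz bounds $B/50$ and $B$) and applies it directly with $B=(1/\sqrt r+\sqrt\ve)^2$; the resulting inequalities and the constant governing $r$ are the same.
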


\begin{proof}
By scaling Lemma \ref{ves} yields the following result: any finite tight frame $\{w_i\}$ with constant $B$ and with norms $||w_i||^2 \ge 0.92 B$ can be partitioned into two Riesz sequences with bounds $B/50$ and $B$.

Now, suppose that $\{u_i\}_{i\in I}$ is a Bessel sequence with bound $1$ and $\|u_i\|^2 = \ve$.
By Theorem $(KS_r)$ for each $r$ we can find a partition $\{\tilde I_j\}_{j=1}^{r}$ of $I$ such that each $\{u_i\}_{i\in  \tilde I_j}$  is a Bessel sequence with bound 
\[
B=\bigg(\frac{1}{\sqrt{r}} + \sqrt{\ve}\bigg)^2.
\]
Now we choose large enough $r$ such that
\begin{equation}\label{fei2}
\|u_i\|^2 = \ve \ge 0.92 B= 0.92 \bigg(\frac{1}{\sqrt{r}} + \sqrt{\ve}\bigg)^2.
\end{equation}
A simple calculation shows that the above inequality simplifies to
\[
\frac{2}{\sqrt{r \ve}} + \frac{1}{r \ve} \le \frac{0.08}{0.92}.
\]
Hence, it suffices to choose 
\[
r \ge \frac{9}{\ve} \bigg(\frac{0.92}{0.08} \bigg)^2.
\]
By Lemma \ref{ves}, each $\{u_i\}_{i\in  \tilde I_j}$ can be partitioned into two Riesz sequences with lower bound $ B/50 \ge \ve/50$ and upper bound $B\le \ve/0.92$. 
This gives the required partition of size $2r$ and completes the proof of Lemma \ref{ves2}.
\end{proof}

Theorem $(FEI)$ is now a consequence of Lemmas \ref{pinball} and \ref{ves2}.
\begin{proof}[Proof of Theorem $(FEI)$]
Suppose $\{u_i\}_{i\in I}$ is an infinite Bessel sequence in a Hilbert space $\mathcal H$ satisfying $||u_i||^2 \ge \ve$.
Without loss of generality we can assume that $||u_i||^2=\ve$ for all $i \in I$. Indeed, $\{\sqrt{\ve} \frac{u_i}{||u_i||}\}_{i\in I}$ is also Bessel sequence with bound $1$. Applying $(FEI)$ for this sequence yields the same conclusion for the original Bessel sequence $\{u_i\}_{i\in I}$.

 Since $I$ is countable, we may assume $I=\N$. For any $n\in \N$, we apply Lemma \ref{ves2} to the initial sequence $\{u_i\}_{i\in [n]}$. Hence, we find a partition $\{I_1^n, \ldots, I_r^n\}$ of $\{u_i\}_{i\in [n]}$ into Riesz sequences with uniform lower and upper bounds of $\ve/50$ and $\ve/0.92$, resp. To show the existence of a global partition of $\{I_1,\ldots,I_r\}$ of $\{u_i\}_{i\in \N}$ into Riesz sequences, it suffices to apply Lemma \ref{pinball}. This done in the same way as in the proof of Lemma \ref{ksinf}.
\end{proof}

\subsection{Casazza-Tremain conjecture} A stronger variant of the Feichtinger conjecture, called $R_\ve$ conjecture, was studied by  Casazza and Tremain \cite{CT1}. This result states that Bessel sequences consisting of unit norm vectors can be partitioned into almost orthogonal sequences. 

\begin{theorem*}[$R_\ve$]
Suppose that $\{u_i\}_{i\in I}$ is a unit norm Bessel sequence with bound $B$ in a separable Hilbert space $\mathcal H$. Then for any $\ve>0$ there exists a partition $\{I_1,\ldots, I_r\}$ of $I$ of size $r=O(B/\ve^4)$, such that each $\{u_i\}_{i\in  I_j}$, $j=1,\ldots, r$, is a Riesz sequence with bounds  $1-\ve$ and $1+\ve$.
\end{theorem*}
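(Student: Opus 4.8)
The plan is a two-stage \emph{reduce-then-pave} argument. First I would apply Weaver's theorem $(KS^\infty_r)$ to break $\{u_i\}_{i\in I}$ into $O(B)$ subfamilies, each Bessel with a fixed universal bound; then I would pave the perturbed Gram operator of each subfamily via self-adjoint paving $(PS)$ to force its Riesz bounds down to $1\pm\ve$. Keeping the two stages separate is exactly what makes the final count linear in $B$: $(PS)$ with error parameter $\ve'$ costs $O((\ve')^{-4})$ sets (by Lemma~\ref{kspp} together with $(PP_{\frac12})\Rightarrow(PR)\Rightarrow(PS)$), so a single application of paving to the Gram operator of the \emph{whole} system—which sits at distance of order $B$ from the identity—would require $\ve'\approx\ve/B$ and give the far too large bound $O(B^4/\ve^4)$.

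\emph{Stage 1 (reducing the Bessel bound).} Testing the Bessel inequality at $u=u_j$ gives $1=\|u_j\|^4\le B$, so $B\ge1$. Rescaling, $\{u_i/\sqrt B\}_{i\in I}$ is Bessel with bound $1$ and satisfies $\|u_i/\sqrt B\|^2=1/B$, so $(KS^\infty_r)$ with $r=r_1:=\lceil B\rceil$ produces a partition $I=J_1\cup\dots\cup J_{r_1}$ with each $\{u_i/\sqrt B\}_{i\in J_\ell}$ Bessel with bound $(1/\sqrt{r_1}+1/\sqrt B)^2$. Undoing the scaling, $\{u_i\}_{i\in J_\ell}$ is Bessel with bound $(\sqrt{B/r_1}+1)^2\le4$. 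Thus after $r_1=O(B)$ blocks, every subfamily is unit-norm and Bessel with bound at most $4$.

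\emph{Stage 2 (paving, and conclusion).} Fix $\ell$ and let $G_\ell=(\lan u_j,u_i\ran)_{i,j\in J_\ell}$ be the Gram operator of $\{u_i\}_{i\in J_\ell}$ on $\ell^2(J_\ell)$; since $G_\ell=TT^*$ for the analysis operator $T$ of that subfamily, one has $0\le G_\ell\le4\,\mathbf I$ with every diagonal entry equal to $\|u_i\|^2=1$, so $N_\ell:=G_\ell-\mathbf I$ is Hermitian, has zero diagonal, and $\|N_\ell\|\le3$. Apply $(PS)$ with error parameter $\ve'=\ve/3$, which holds for $(r_2,\ve')$-paving with $r_2=O((\ve')^{-4})=O(\ve^{-4})$: this gives $J_\ell=J_\ell^1\cup\dots\cup J_\ell^{r_2}$ with $\|P_{J_\ell^k}N_\ell P_{J_\ell^k}\|\le\ve'\|N_\ell\|\le\ve$ for all $k$. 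The common refinement $\{J_\ell^k:\ell\in[r_1],\,k\in[r_2]\}$ then partitions $I$ into $r_1r_2=O(B/\ve^4)$ blocks, and for one block $J:=J_\ell^k$ the Gram operator of $\{u_i\}_{i\in J}$ is the compression $P_JG_\ell P_J$, hence within $\|P_JN_\ell P_J\|\le\ve$ of the identity (a compression does not increase the operator norm). Therefore, for every finitely supported scalar sequence $\{a_i\}_{i\in J}$,
\[
(1-\ve)\sum_{i\in J}|a_i|^2\ \le\ \Big\|\sum_{i\in J}a_iu_i\Big\|^2\ \le\ (1+\ve)\sum_{i\in J}|a_i|^2,
\]
i.e.\ $\{u_i\}_{i\in J}$ is a Riesz sequence with bounds $1-\ve$ and $1+\ve$, as required.

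The one thing to get right—and the only real obstacle I foresee—is the bookkeeping: one must recognize that paving the whole system directly loses a factor $B^4$, and that Weaver's theorem $(KS^\infty_r)$ is precisely the device that trades the Bessel bound $B$ for a constant at only linear cost in $B$. The remaining ingredients—that a subfamily's Gram operator is a compression of the full one, and that a Hermitian operator within $\ve$ of the identity yields Riesz bounds $1\pm\ve$—are routine. One could instead carry out Stage~1 through the Bessel-bound reduction already present in the proof of $(FEI)$, but $(FEI)$ used as a black box only outputs Riesz sequences with uncontrolled bounds, so the paving of Stage~2 is unavoidable in any case.
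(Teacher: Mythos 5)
Your proof is correct, but it follows a genuinely different route from the paper's. The paper deduces $(R_\ve)$ from $(FEI)$: it first uses the scaled $(FEI)$ (via Lemma \ref{ves2}) to split $\{u_i\}_{i\in I}$ into $O(B)$ unit-norm Riesz sequences with universal bounds $1/50$ and $1/0.92$, and then tightens each one by applying $(KS_r)$ twice --- once to the sequence itself to push its upper Riesz bound down to $1+\ve$, and once to its biorthogonal dual so that the duality Lemma \ref{subriesz} converts the dual's upper bound into the lower bound $1/(1+\ve)\ge 1-\ve$ for the original vectors; the $O(\ve^{-4})$ comes from refining by two partitions of size $O(\ve^{-2})$ each. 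You instead apply $(KS^\infty_r)$ once (after rescaling by $1/\sqrt B$) to reduce the Bessel bound to $4$ at cost $O(B)$ blocks, and then pave the zero-diagonal Hermitian part $N_\ell=G_\ell-\mathbf I$ of each block's Gram operator using $(PS)$ with error $\ve/3$, which yields \emph{both} Riesz bounds $1\pm\ve$ in a single step; your $\ve^{-4}$ enters through the quantitative chain $(KS^\infty_r)\Rightarrow(PP_{\frac12})\Rightarrow(PR)\Rightarrow(PS)$ (Lemma \ref{kspp} gives $36/\ve^2$ blocks for projection paving, squared by the reflection step), and your bookkeeping there is accurate. What your approach buys: it bypasses the biorthogonal-dual argument entirely (in particular the delicate point that dual vectors of a unit-norm Riesz sequence can have large norms, which the paper's sketch glosses over), and since $(KS^\infty_r)$ and $(PS)$ are already stated for countable index sets, you also avoid the finite-to-infinite pinball step; what the paper's route buys is that it recycles $(FEI)$ and the frame-theoretic duality machinery already developed in that section, rather than re-entering the operator-paving chain. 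Your observation that paving the whole Gram operator at once would cost $O(B^4/\ve^4)$, so that the Weaver step must be done first, is exactly the right bookkeeping point, and the remaining steps (Gram compression, Hermitian perturbation of the identity giving Riesz bounds on finitely supported sequences) are sound.
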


In the proof of Theorem ($R_\ve$) we will use the following lemma. The case when $J=I$ is a well-known fact, see \cite[Section 3.6]{Ch}. For the sake of completeness we will give the proof of Lemma \ref{subriesz}.

\begin{lemma}\label{subriesz} 
Suppose $\{u_i\}_{i\in I}$ is a Riesz basis in a Hilbert space $\mathcal H$. Let $\{u^*_i\}_{i\in I}$ be its unique biorthogonal (dual) Riesz basis, i.e.,
\[
\lan u_i, u_j^* \ran =\delta_{i,j} \qquad\text{for all }i,j\in I.
\]
Let $J\subset I$ be any subset. Then,  $\{u_i\}_{i\in J}$ is a Riesz sequence with bounds $A$ and $B$ $\iff$ $\{u^*_i\}_{i\in J}$  is a Riesz sequence with bounds $1/B$ and $1/A$.
\end{lemma}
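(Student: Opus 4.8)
The plan is to reduce everything to the classical case $J=I$ — where the Gram operator of the dual basis is the inverse of the Gram operator of $\{u_i\}_{i\in I}$ — and to link the two subfamilies through biorthogonality. First I would introduce the synthesis operators $T_J,W_J\colon \ell^2(J)\to\mathcal H$, given by $T_J a=\sum_{i\in J}a_i u_i$ and $W_J b=\sum_{i\in J}b_i u^*_i$. By definition, $\{u_i\}_{i\in J}$ is a Riesz sequence with bounds $A,B$ exactly when $A\,\mathbf I\le T_J^*T_J\le B\,\mathbf I$ on $\ell^2(J)$, i.e.\ $\sqrt A\,\|a\|\le\|T_J a\|\le\sqrt B\,\|a\|$ for all $a$, and similarly for $W_J$; in particular such a family is a Riesz basis with the same bounds for its closed span $H_J:=\overline{\spa}\{u_i:i\in J\}$. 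The biorthogonality hypothesis $\langle u_i,u^*_j\rangle=\delta_{ij}$ unwinds directly into the operator identities $W_J^*T_J=\mathbf I=T_J^*W_J$ on $\ell^2(J)$.

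For the implication ``$\Rightarrow$'', assume $\{u_i\}_{i\in J}$ has bounds $A,B$, so that $\|T_J\|\le\sqrt B$; then for every $b\in\ell^2(J)$ one has $\|b\|=\|T_J^*W_J b\|\le\|T_J\|\,\|W_J b\|\le\sqrt B\,\|W_J b\|$, which already provides the lower bound $1/B$ for $\{u^*_i\}_{i\in J}$. For the remaining upper bound $1/A$ I would appeal to the case $J=I$ together with the elementary monotonicity of Riesz bounds under restriction: since the Gram operator of $\{u^*_i\}_{i\in I}$ is the inverse of that of $\{u_i\}_{i\in I}$, and the latter may be taken to be a Riesz basis with bounds $A,B$, the dual basis $\{u^*_i\}_{i\in I}$ is a Riesz basis with bounds $1/B,1/A$; and any subfamily of a Riesz basis with bounds $C,D$ is again a Riesz sequence with bounds $C,D$ — the upper inequality by extending the coefficient vector by zeros, the lower one because the Riesz lower bound holds for all $\ell^2$ coefficient sequences, in particular those supported on $J$. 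The implication ``$\Leftarrow$'' is identical with the roles of $\{u_i\}$ and $\{u^*_i\}$ interchanged, using $u^{**}_i=u_i$.

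The step I expect to be the main obstacle is the control of the two upper Riesz bounds. Biorthogonality by itself delivers only the lower bounds: the Gram operators of $\{u_i\}_{i\in J}$ and $\{u^*_i\}_{i\in J}$ are the compressions to $\ell^2(J)$ of the mutually inverse Gram operators $G$ and $G^{-1}$ of the full bases, and a compression of $G^{-1}$ need not be the inverse of the compression of $G$, so there is no purely intrinsic passage from the bounds of one subfamily to those of the other. The equivalence must therefore be kept anchored to the ambient bases; an alternative route is to run the $J=I$ argument inside $H_J$, noting that the canonical dual of $\{u_i\}_{i\in J}$ in $H_J$ is $\{P_{H_J}u^*_i\}_{i\in J}$ by uniqueness of the biorthogonal system, and then to compare this intrinsic dual with $\{u^*_i\}_{i\in J}$ via the orthogonal splitting $\|\sum_{i\in J}b_i u^*_i\|^2=\|\sum_{i\in J}b_i P_{H_J}u^*_i\|^2+\|\sum_{i\in J}b_i(\mathbf I-P_{H_J})u^*_i\|^2$.
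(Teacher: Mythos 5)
Your operator-theoretic argument for the lower bounds is correct and is essentially the paper's own duality argument in a different packaging: from $T_J^*W_J=\mathbf I$ on $\ell^2(J)$ and $\|T_J\|\le\sqrt B$ you get $\|b\|\le\sqrt B\,\|W_Jb\|$, i.e.\ the lower bound $1/B$ for $\{u_i^*\}_{i\in J}$, and exchanging the roles of the two families transfers an upper bound of $\{u_i^*\}_{i\in J}$ into a lower bound for $\{u_i\}_{i\in J}$. The genuine gap is in your treatment of the upper bounds. The step ``the latter may be taken to be a Riesz basis with bounds $A,B$'' is unjustified: the hypothesis constrains only the subfamily $\{u_i\}_{i\in J}$, while the ambient basis $\{u_i\}_{i\in I}$ may have much worse bounds, so you cannot feed $A,B$ into the classical $J=I$ statement and then restrict. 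In fact this half of the asserted equivalence is false for proper subsets: take $\mathcal H=\C^2$, $u_1=e_1$, $u_2=e_1+\ve e_2$, $J=\{1\}$. Then $\{u_1\}$ is a Riesz sequence with bounds $A=B=1$, but $u_1^*=e_1-\ve^{-1}e_2$ has $\|u_1^*\|^2=1+\ve^{-2}$, so $\{u_1^*\}$ has no upper Riesz bound anywhere near $1/A=1$. Your fallback through $P_{H_J}$ hits the same wall: the canonical dual of $\{u_i\}_{i\in J}$ inside $H_J$ is indeed $\{P_{H_J}u_i^*\}_{i\in J}$, but the orthogonal splitting only reproduces the lower bound, since the discarded component $(\mathbf I-P_{H_J})u_i^*$ can be arbitrarily large (it is $-\ve^{-1}e_2$ in the example).

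So the obstacle you flagged is not merely technical; no argument closes it. What is provable --- and what the paper's displayed computation actually establishes --- is the pair of one-sided transfers ``upper Riesz (Bessel) bound $B$ for one family $\Rightarrow$ lower Riesz bound $1/B$ for the biorthogonal family,'' applied in both directions of the duality; the paper's ``Conversely'' and ``By symmetry'' sentences asserting the reverse implications are subject to the same counterexample. This weaker content suffices for the later applications: in the proof of $(R_\ve)$ the reciprocal upper bound $50$ for the dual is used only in the full-index situation $J=I'$ relative to $\mathcal H'=\overline{\spa}\{u_i:i\in I'\}$, where the inverse-Gram argument you cite is legitimate, while for genuine subsets $J$ only the valid direction ``upper bound of the dual $\Rightarrow$ lower bound of the primal'' is invoked. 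The right fix for your write-up is to prove exactly these one-sided statements (your $T_J^*W_J=\mathbf I$ argument does this cleanly) and to restrict the reciprocal upper-bound claim to the case $J=I$.
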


\begin{proof} Suppose that $\{u_i\}_{i\in J}$ has upper Riesz bound $B$. This is equivalent to the Bessel condition
\begin{equation}\label{sr2}
\sum_{i\in J} |\lan u, u_i \ran |^2 \le B ||u||^2
\qquad\text{for all }u\in \mathcal H.
\end{equation}
For any sequence $\{a_i\}_{i\in J} \in \ell^2$, there exists a unique $u\in \mathcal H$ such that
\[
\lan u, u_i \ran = \begin{cases} a_i & i\in J,\\
0 & \text{otherwise}.
\end{cases}
\]
Since $u= \sum_{i\in J} a_i u^*_i$, by \eqref{sr2} we have
\[
\bigg\| \sum_{i\in J} a_i u^*_i \bigg\|^2 = ||u||^2 \ge \frac{1}{B} \sum_{i\in J} |\lan u, u_i \ran |^2 = \frac{1}{B} \sum_{i\in J} |a_i|^2.
\]
Conversely, if $\{u^*_i\}_{i\in J}$ has lower Riesz bound $1/B$, then \eqref{sr2} holds and $\{u_i\}_{i\in J}$ has upper Riesz bound $B$. 
By symmetry, $\{u^*_i\}_{i\in J}$ has upper Riesz bound $1/A$ if and only if $\{u_i\}_{i\in J}$ has lower Riesz bound $A$, which completes the proof of the lemma.
\end{proof}

\begin{lemma} 
$(FEI) \implies (R_\ve)$.
\end{lemma}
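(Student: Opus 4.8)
The plan is to deduce $(R_\ve)$ from Theorem~$(FEI)$ in two stages: first use $(FEI)$ to partition $\{u_i\}_{i\in I}$ into $O(B)$ Riesz sequences whose Riesz bounds, although not specified in the statement of $(FEI)$, can be taken to be \emph{absolute} constants; then, on each of these blocks, sharpen the bounds all the way to $1\pm\ve$ by paving the associated Gram operator. We may assume $0<\ve<1$, since otherwise the asserted lower bound $1-\ve$ is not positive and there is nothing to prove.

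\emph{Step 1 (reduce to blocks with universal Riesz bounds).} Since $\|u_i\|^2=1$ we have $B\ge 1$, and $\{u_i/\sqrt{B}\}_{i\in I}$ is a Bessel sequence with bound $1$ all of whose vectors have norm exactly $1/B$. I would apply to it the explicit finite form of the Feichtinger theorem from Lemma~\ref{ves2}, together with the pinball principle (Lemma~\ref{pinball}) to treat countably infinite $I$. This gives a partition $I=J_1\sqcup\dots\sqcup J_{r_1}$ with $r_1=O(B)$ such that each $\{u_i/\sqrt{B}\}_{i\in J_l}$ is a Riesz sequence with bounds $\tfrac1{50B}$ and $\tfrac1{0.92B}$; rescaling, each $\{u_i\}_{i\in J_l}$ is a unit-norm Riesz sequence with the absolute bounds $a_*:=\tfrac1{50}$ and $b_*:=\tfrac1{0.92}$. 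Consequently the Gram operator $G_l=(\langle u_i,u_j\rangle)_{i,j\in J_l}$ on $\ell^2(J_l)$ is bounded and self-adjoint, has all diagonal entries equal to $1$, and has spectrum contained in $[a_*,b_*]$.

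\emph{Step 2 (paving each block).} Fix $l$ and put $S_l:=G_l-\bI$, a bounded self-adjoint operator on $\ell^2(J_l)$ with zero diagonal and $\|S_l\|\le K_*:=\max(1-a_*,\,b_*-1)<1$. I would then invoke Theorem~$(PS)$; tracking the constants through Lemma~\ref{kspp} and the implications $(PP_{\frac12})\Rightarrow(PR)\Rightarrow(PS)$ shows that every zero-diagonal self-adjoint operator is $(r,\ve)$-pavable with $r=O(\ve^{-4})$. Hence $J_l$ splits into $O(\ve^{-4})$ pieces $J_{l,1},J_{l,2},\dots$ with $\|P_{J_{l,k}}S_lP_{J_{l,k}}\|\le\ve\|S_l\|\le\ve$. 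The Gram operator of $\{u_i\}_{i\in J_{l,k}}$ is exactly the compression $P_{J_{l,k}}G_lP_{J_{l,k}}$, which differs from the identity of $\ell^2(J_{l,k})$ by $P_{J_{l,k}}S_lP_{J_{l,k}}$; thus its spectrum lies in $[1-\ve,1+\ve]$, i.e. $\{u_i\}_{i\in J_{l,k}}$ is a Riesz sequence with bounds $1-\ve$ and $1+\ve$. Collecting the $J_{l,k}$ over all $l,k$ produces a partition of $I$ of size $r_1\cdot O(\ve^{-4})=O(B/\ve^4)$ into such Riesz sequences, which is precisely $(R_\ve)$.

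\emph{Main obstacle.} The delicate point, and the reason $(FEI)$ has to be used and not merely paving, is the dependence of $r$ on the Bessel bound $B$. Paving the Gram operator of the whole family at once would only give $\|G-\bI\|\le B$, forcing a paving parameter of order $\ve/B$ and hence of order $(B/\ve)^4$ pieces; Theorem~$(FEI)$ is what first cuts the problem down to $O(B)$ blocks on which the Gram operator is within a universal distance of the identity, after which only $O(\ve^{-4})$ further pieces per block are needed. A minor secondary point is that $(FEI)$ as stated does not record the Riesz bounds of its blocks, so one has to read them off from its proof (Lemma~\ref{ves2}); alternatively one may keep these bounds under control by passing to the dual Riesz bases of the blocks via Lemma~\ref{subriesz}.
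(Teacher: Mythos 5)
Your proof is correct, and its second stage takes a genuinely different route from the paper. The first stage coincides with the paper's: both rescale so that Lemma \ref{ves2} (with the pinball principle, Lemma \ref{pinball}) applies and yields $O(B)$ unit-norm blocks with the absolute Riesz bounds $1/50$ and $1/0.92$. For the second stage the paper stays inside frame theory: it passes to the biorthogonal dual Riesz basis of each block, uses Lemma \ref{subriesz} to control its bounds, applies $(KS_r)$ separately to the block and to its dual (each costing $O(\ve^{-2})$ pieces), and takes a common refinement, recovering the lower bound $1-\ve$ again through Lemma \ref{subriesz}. You instead observe that the Gram operator $G_l$ of a unit-norm block has diagonal $1$ and spectrum in $[1/50,\,1/0.92]$, so $S_l=G_l-\bI$ is self-adjoint with zero diagonal and norm $<1$, and you pave it by $(PS)$; since the compression of $G_l$ to a paving piece is exactly the Gram matrix of the corresponding subfamily, its spectrum lands in $[1-\ve,1+\ve]$ and both Riesz bounds are obtained in one stroke, with $O(\ve^{-4})$ pieces per block by the constant-tracking through Lemma \ref{kspp} and $(PP_{\frac12})\Rightarrow(PR)\Rightarrow(PS)$. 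Both routes give $r=O(B/\ve^4)$. What your version buys is conceptual economy at the second stage: no dual basis, no Lemma \ref{subriesz}, and symmetric two-sided control from a single paving; what it costs is reliance on the full paving chain (hence on the $(PR)$ dilation and the squaring of $r$ there), whereas the paper's double application of $(KS_r)$ plus refinement uses only Weaver's theorem itself and arrives at the same $\ve^{-4}$ order. Your closing remark about why a single global paving would give $(B/\ve)^4$ rather than $B/\ve^4$ is also accurate and is precisely the role $(FEI)$ plays in both arguments.
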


\begin{proof}
In the first step we apply  a scaled version of Theorem $(FEI)$  to find a partition of $\{u_i\}_{i\in I}$ of size $O(B)$ into Riesz sequences with uniform lower and upper bounds. By Lemma \ref{ves2}, these bounds are $1/50$ and $1/0.92$, resp. 

Suppose that $\{u_i\}_{i \in I'}$, $I' \subset I$ is one of these unit-norm Riesz sequences. In the next step we need to tighten these bounds as follows. 
Let $\{u^*_i\}_{i\in I'} $ be the unique biorthogonal (dual) Riesz basis to $\{u_i\}_{i\in I'}$ in its closed linear span $\mathcal H'=\overline{\operatorname{span}}\{u_i: i \in I'\}$. By Lemma \ref{subriesz} the upper Riesz bound of $\{u^*_i\}_{i\in I'}$ is $50$. Applying Theorem $(KS_r)$ to both $\{u_i\}_{i\in I'}$ and $\{u^*_i\}_{i\in I'}$, we can find partitions into Riesz sequences, which reduce upper bounds to $1+\ve$. A calculation shows that this requires partitions each of size $O(1/\ve^2)$. Taking common refinement of all of these partitions produces a partition of $\{u_i\}_{i\in I}$ of size $O(B/\ve^4)$. Let $\{u_i\}_{i\in J}$ be any element of of this partition. Then, both $\{u_i\}_{i\in J}$ and $\{u^*_i\}_{i\in J}$ are Riesz sequences with upper bounds $1+\ve$. Lemma \ref{subriesz} implies that $\{u_i\}_{i\in J}$ has lower bound $1/(1+\ve) \ge 1 - \ve$.  
\end{proof}

\subsection{Bourgain-Tzafriri conjecture}

Theorem $(R_\ve)$ yields automatically the Bourgain--Tzafriri restricted invertibility conjecture.

\begin{theorem*}[$BT$]
Let $\{e_i\}_{i\in I}$ be an orthonormal basis of a separable Hilbert space $\mathcal H$. Let $T: \mathcal H \to \mathcal H$ be a bounded linear operator  with norm $\|T\|^2 \le B$ and $\|Te_i\|=1$ for all $i\in I$, where $B>1$.
Then, for any $\ve>0$, there exists a partition $\{I_1,\ldots, I_r\}$ of $I$ of size $r=O(B/\ve^4)$, such that $T$ is $(1+\ve)$-isometry when restricted to each orthogonal subspace
\[
\mathcal H_j = \overline{\operatorname{span}} \{ e_i : i \in I_j\}.
\]
That is, for all  $j =1,\ldots,r$,
\begin{equation}\label{bt}
(1-\ve) ||f||^2 \le ||Tf||^2  \le (1+\ve)||f||^2 \qquad\text{for all }f \in \mathcal H_j.
\end{equation}
\end{theorem*}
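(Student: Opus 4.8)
The plan is to reduce the statement directly to Theorem $(R_\ve)$ by passing to the image vectors $u_i := Te_i$. The first step is to check that $\{u_i\}_{i\in I}$ is a unit-norm Bessel sequence with bound $B$. The norm condition is immediate: $\|u_i\| = \|Te_i\| = 1$ by hypothesis. For the Bessel bound, using Parseval's identity for the orthonormal basis $\{e_i\}$ together with $\|T^*\| = \|T\|$, I would compute, for every $f\in\mathcal H$,
\[
\sum_{i\in I} |\lan f, u_i \ran|^2 = \sum_{i\in I} |\lan T^* f, e_i \ran|^2 = \|T^* f\|^2 \le \|T\|^2 \|f\|^2 \le B\|f\|^2.
\]

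The second step is simply to invoke Theorem $(R_\ve)$ for $\{u_i\}_{i\in I}$ and the given $\ve>0$: it produces a partition $\{I_1,\ldots,I_r\}$ of $I$ with $r = O(B/\ve^4)$ such that each subfamily $\{u_i\}_{i\in I_j}$ is a Riesz sequence with bounds $1-\ve$ and $1+\ve$. Note that the subspaces $\mathcal H_j = \overline{\operatorname{span}}\{e_i : i\in I_j\}$ are mutually orthogonal, which is automatic since the $e_i$ are orthonormal and the index sets $I_j$ are disjoint.

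The final step is to transport the Riesz-sequence inequality back through $T$. Fix $j$ and $f\in\mathcal H_j$, and expand $f = \sum_{i\in I_j} a_i e_i$ with $\sum_{i\in I_j}|a_i|^2 = \|f\|^2$; since $T$ is bounded and the series converges (unconditionally, as $\{e_i\}$ is an orthonormal basis), $Tf = \sum_{i\in I_j} a_i u_i$. The Riesz bounds for $\{u_i\}_{i\in I_j}$ then read
\[
(1-\ve)\|f\|^2 = (1-\ve)\sum_{i\in I_j}|a_i|^2 \le \Big\| \sum_{i\in I_j} a_i u_i \Big\|^2 = \|Tf\|^2 \le (1+\ve)\sum_{i\in I_j}|a_i|^2 = (1+\ve)\|f\|^2,
\]
which is exactly \eqref{bt}. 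There is essentially no obstacle here beyond bookkeeping; the only minor points are the convergence remark just made and the observation that the hypothesis $B>1$ is harmless (indeed $B\ge\|T\|^2\ge\|Te_i\|^2=1$ automatically), so nothing is lost by assuming it.
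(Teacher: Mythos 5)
Your proposal is correct and follows exactly the paper's proof: set $u_i = Te_i$, note this is a unit-norm Bessel sequence with bound $B$, apply Theorem $(R_\ve)$, and translate the Riesz bounds back to the inequality \eqref{bt} for $T$ on each $\mathcal H_j$. You simply spell out the Bessel-bound computation and the convergence bookkeeping that the paper leaves implicit.
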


\begin{lemma} $(R_\ve) \implies (BT)$.
\end{lemma}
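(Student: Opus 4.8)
The proof is a direct translation: the image vectors $u_i := Te_i$ form a unit-norm Bessel sequence, the partition furnished by $(R_\ve)$ for this sequence is exactly the partition we want, and the Riesz-sequence bounds on each piece transcribe verbatim into the isometry estimate \eqref{bt} once we exploit the orthonormality of $\{e_i\}$.

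\textbf{Key steps.} First I would set $u_i = Te_i$, so that $\|u_i\| = 1$ by hypothesis, and verify that $\{u_i\}_{i\in I}$ is a Bessel sequence with bound $B$: for every $f\in\mathcal H$,
\[
\sum_{i\in I} |\langle f, Te_i\rangle|^2 = \sum_{i\in I} |\langle T^*f, e_i\rangle|^2 = \|T^*f\|^2 \le \|T\|^2\|f\|^2 \le B\|f\|^2,
\]
using that $\{e_i\}$ is an orthonormal basis and $\|T^*\| = \|T\|$. Next I would apply Theorem $(R_\ve)$ to $\{u_i\}_{i\in I}$, obtaining a partition $\{I_1,\ldots,I_r\}$ of $I$ with $r = O(B/\ve^4)$ such that each $\{u_i\}_{i\in I_j}$ is a Riesz sequence with bounds $1-\ve$ and $1+\ve$. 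Finally, fix $j$ and take $f\in\mathcal H_j = \overline{\operatorname{span}}\{e_i : i\in I_j\}$; write $f = \sum_{i\in I_j} a_i e_i$ with $\{a_i\}_{i\in I_j}\in\ell^2(I_j)$, so that $\|f\|^2 = \sum_{i\in I_j}|a_i|^2$ by orthonormality and $Tf = \sum_{i\in I_j} a_i u_i$ by linearity and continuity of $T$. The Riesz bounds on $\{u_i\}_{i\in I_j}$ then give
\[
(1-\ve)\sum_{i\in I_j}|a_i|^2 \le \Big\|\sum_{i\in I_j} a_i u_i\Big\|^2 \le (1+\ve)\sum_{i\in I_j}|a_i|^2,
\]
which is precisely \eqref{bt}.

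\textbf{Main obstacle.} There is essentially no serious obstacle; the only mild point requiring care is that $f\in\mathcal H_j$ may be an infinite combination of the $e_i$, whereas Definition \ref{def2} phrases the Riesz inequalities for finitely supported coefficient sequences. This is handled by the standard convergence remark following Definition \ref{def2}: the inequalities extend from finitely supported $\{a_i\}$ to all of $\ell^2(I_j)$, so the estimate holds for every $f\in\mathcal H_j$.
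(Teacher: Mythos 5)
Your proof is correct and follows the same route as the paper: set $u_i = Te_i$, observe this is a unit-norm Bessel sequence with bound $B$, apply $(R_\ve)$, and translate the Riesz bounds on each $\{u_i\}_{i\in I_j}$ back into the two-sided estimate \eqref{bt} via the orthonormal expansion of $f\in\mathcal H_j$. You simply spell out the details (the Bessel verification and the $\ell^2$-extension of the Riesz inequalities) that the paper compresses into ``translating this back for the property of $T$.''
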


\begin{proof}
Define vectors $u_i = Te_i$, $i\in I$. By our hypothesis $\{u_i\}_{i\in I}$ is a unit norm Bessel sequence with bound $B$. By Theorem $(R_\ve)$, there exists a partition  $\{I_1,\ldots, I_r\}$ of $I$ of size $r=O(B/\ve^4)$ such that each collection $\{u_i\}_{i\in I_j}$, $j\in [r]$, is a Riesz sequence with bounds $1-\ve$ and $1+\ve$. Translating this back for the property of $T$ yields \eqref{bt}.
\end{proof}

A classical application of the results studied in this section involves Fourier frames. If $E \subset [0,1]$ has positive Lebesgue measure, then the collection of functions $\phi_n(t)=e^{2\pi i nt} \chi_E(t)$, $n\in \Z$, is a Parseval frame for $L^2(E)$, often called a Fourier frame. Since this is an equal norm frame, i.e., $||\phi_n||^2=|E|$ for all $n\in \Z$, Theorem $(R_\ve)$ yields the following corollary. 

\begin{corollary}\label{ff} There exists a universal constant $c>0$ such that for any $\ve>0$ and any subset $E \subset [0,1]$ with positive measure, the corresponding Fourier frame $\{\phi_n\}_{n\in\Z}$ can be decomposed as the union of $r \le c\ve^{-4}|E|^{-1}$ Riesz sequences with bounds $1\pm \ve$.
\end{corollary}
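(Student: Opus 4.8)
The plan is to reduce immediately to Theorem $(R_\ve)$ after passing to the unit-norm normalization of the Fourier frame. First I would record the two facts that make the reduction work. Since $\{e^{2\pi i n t}\}_{n\in\Z}$ is an orthonormal basis of $L^2[0,1]$ and multiplication by $\chi_E$ is the orthogonal projection of $L^2[0,1]$ onto $L^2(E)$, the converse part of Naimark's theorem (Lemma \ref{nai}) shows that $\{\phi_n\}_{n\in\Z}$ is a Parseval frame for $L^2(E)$; in particular
\[
\sum_{n\in\Z}|\langle f,\phi_n\rangle|^2 = \|f\|^2 \qquad\text{for all }f\in L^2(E),
\]
and $\|\phi_n\|^2 = \int_E 1\,dt = |E|$ for every $n\in\Z$.

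Next I would set $\psi_n = |E|^{-1/2}\phi_n$, so that $\|\psi_n\|=1$ for all $n$. Dividing the Parseval identity above by $|E|$ gives $\sum_{n\in\Z}|\langle f,\psi_n\rangle|^2 = |E|^{-1}\|f\|^2$ for all $f\in L^2(E)$, so $\{\psi_n\}_{n\in\Z}$ is a unit-norm Bessel sequence in $L^2(E)$ with bound $B=1/|E|>1$. I would then invoke Theorem $(R_\ve)$ for this $B$: for the given $\ve>0$ there is a partition $\{I_1,\ldots,I_r\}$ of $\Z$ with $r = O(B/\ve^4) = O(\ve^{-4}|E|^{-1})$ such that each $\{\psi_n\}_{n\in I_j}$ is a Riesz sequence with bounds $1-\ve$ and $1+\ve$. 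This is exactly the asserted decomposition of the (normalized) Fourier frame; the unnormalized vectors $\phi_n = |E|^{1/2}\psi_n$ differ only by the global scalar $|E|^{1/2}$, which merely rescales the Riesz bounds to $|E|(1\pm\ve)$. Taking $c$ to be the absolute constant hidden in the $O(\cdot)$ of $(R_\ve)$ completes the argument.

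There is essentially no obstacle in this last step: the only things to check are the Parseval property of the Fourier frame (immediate from Lemma \ref{nai}, or simply quoted) and the elementary behaviour of Bessel and Riesz bounds under normalization by the constant $\|\phi_n\|^{-1}=|E|^{-1/2}$. All of the genuine difficulty has already been absorbed into Theorem $(R_\ve)$, and hence ultimately into $(FEI)$, $(KS_r)$, and $(MCP)$.
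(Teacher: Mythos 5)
Your proposal is correct and is essentially the paper's own argument: the paper simply observes that $\{\phi_n\}$ is an equal-norm Parseval frame with $\|\phi_n\|^2=|E|$ and invokes Theorem $(R_\ve)$ with $B=1/|E|$ after normalization, exactly as you do. Your remark that the bounds $1\pm\ve$ pertain to the unit-norm vectors $\psi_n=|E|^{-1/2}\phi_n$ (equivalently, bounds $|E|(1\pm\ve)$ for the $\phi_n$ themselves) is the right reading of the statement.
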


\subsection{Notes} The proof of $(FEI)$ and $(R_\ve)$ follows the approach in \cite{BCMS} with some minor simplifications. One can show $(FEI)$ with less effort by deducing it from Theorem $(PB)$ as in \cite[Proposition 3.1]{CCLV}, but with worse bounds on the partition size $r$. The bound on $r$ in Theorem $(FEI)$ is asymptotically optimal as $\ve \to 0$. This can be seen by considering a union of $\lfloor 1/\ve \rfloor$ o.n. bases scaled by the factor $\sqrt{\ve}$. A more general version of Lemma \ref{major} for frames can be found in \cite{AMRS, BJ}.

Lawton \cite{La} and Paulsen \cite{Pa} have shown that the partition subsets of $\mathbb Z$ in Corollary \ref{ff} can be chosen to be syndetic sets, i.e., subsets of $\Z$ with bounded gaps.
The study of the Feichtinger conjecture for Fourier frames is connected with the problem of paving for Laurent operators. A Laurent operator $L_\varphi: L^2[0,1] \to L^2[0,1]$ is given by $L_\varphi f = \varphi f$ for $f\in L^2[0,1]$, where the symbol $\varphi \in L^\infty[0,1]$. The problem of paving for Laurent operators was studied by Halpern, Kaftal, and Weiss \cite{HKW}. It was continued by Bourgain and Tzafriri \cite{BT3}, who have shown that every Fourier frame has a Riesz sequence indexed by a subset $\Lambda \subset \Z$ of positive upper density at least $c|E|$. This is the consequence of their celebrated restricted invertibility theorem \cite{BT1, BT2}, which also holds for $\ell^p$ spaces. While the Bourgain-Tzafriri restricted invertibility conjecture, Theorem $(BT)$, holds for $\ell^p$ when $p=2$, it is an open problem for $p\ne 2$.

Akemann and Weaver \cite{AW} have shown an interesting generalization of $(KS_r)$ in the form of Lyapunov's theorem.
\begin{theorem*}[$AW$]
Suppose $\{u_i\}_{i\in I}$ is a Bessel family with bound $1$ in a separable Hilbert space $\mathcal{H}$, which consists of vectors of norms $\|u_i\|^2\leq \delta$, where $\delta>0$. Suppose that $0\le \tau_i \le 1$ for all $i\in I$. Then, there exists a subset of indices $I_0 \subset I$ such that
\begin{equation}\label{aw0}
\bigg\| \sum_{i\in I_0} u_i \otimes u_i - \sum_{i\in I} \tau_i u_i \otimes u_i \bigg\| \le C \delta^{1/8},
\end{equation}
where $C>0$ is a universal constant.
\end{theorem*}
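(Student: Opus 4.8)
The plan is to deduce the statement directly from Theorem $(MSS)$ by a single probabilistic construction, after a routine reduction to the finite case. First I would reduce to $I=[m]$ finite and $\mathcal H=\mathbb{C}^d$ finite dimensional: for each $n$, Theorem $(MSS)$-free reasoning is not needed yet — instead one applies the finite version of the present theorem to $\{u_i\}_{i\in[n]}$ on the (finite dimensional) range of $\sum_{i\le n}u_i\otimes u_i$, obtaining sets $I_0^{(n)}\subseteq[n]$ obeying the desired bound, and passes to a global $I_0$ by the pigeonhole/diagonal argument of Lemma~\ref{pinball} (exactly as in Lemma~\ref{ksinf}). The operator norm bound survives the limit because for each fixed $\xi$ the series $\sum_{i\in I}|\langle\xi,u_i\rangle|^2\le\|\xi\|^2$ converges, so the tails of $\sum_i u_i\otimes u_i$ and $\sum_i\tau_i u_i\otimes u_i$ tested against $\xi$ vanish. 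From now on $u_1,\dots,u_m\in\mathbb{C}^d$, $S:=\sum_{i=1}^m u_i u_i^*\preceq\mathbf I$, $\|u_i\|^2\le\delta$, $\tau_i\in[0,1]$, and $M:=\sum_{i=1}^m\tau_i u_i u_i^*$ satisfies $0\preceq M\preceq S\preceq\mathbf I$.

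The key device converts the two-sided estimate \eqref{aw0} into the one-sided norm bound of $(MSS)$ by doubling the space. Let $x_1,\dots,x_m$ be independent with $\mathbb{P}(x_i=1)=\tau_i$, $\mathbb{P}(x_i=0)=1-\tau_i$, and work in $\mathbb{C}^d\oplus\mathbb{C}^d$ with the random vectors $\hat u_i=(u_i,0)$ if $x_i=1$ and $\hat u_i=(0,u_i)$ if $x_i=0$. Then $\hat u_i\hat u_i^*=\diag(x_i u_i u_i^*,\,(1-x_i)u_i u_i^*)$ takes two values, $\tr(\hat u_i\hat u_i^*)=\|u_i\|^2\le\delta$, and $\sum_i\mathbb{E}[\hat u_i\hat u_i^*]=\diag(M,\,S-M)=:D\preceq\mathbf I_{2d}$. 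Write $\mathbf I_{2d}-D=\sum_{k=1}^L p_k p_k^*$ with $\|p_k\|^2\le\delta$ (scaled eigenvectors of $\mathbf I_{2d}-D$, whose eigenvalues lie in $[0,1]$). The finite family $\hat u_1,\dots,\hat u_m,p_1,\dots,p_L$ of jointly independent random vectors in $\mathbb{C}^{2d}$ (the $p_k$ constant) satisfies the hypotheses \eqref{thmp1} of $(MSS)$ with $\epsilon=\delta$, so there is an outcome of $(x_i)$ for which
\[
\Big\|\sum_{i=1}^m\hat u_i\hat u_i^*+\sum_{k=1}^L p_k p_k^*\Big\|\le(1+\sqrt\delta)^2 .
\]

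Fix such an outcome and set $I_0=\{i:x_i=1\}$. Since $\sum_k p_k p_k^*=\mathbf I_{2d}-D$ and $\sum_i\hat u_i\hat u_i^*=\diag\big(\sum_{i\in I_0}u_i u_i^*,\;S-\sum_{i\in I_0}u_i u_i^*\big)$, the displayed bound gives
\[
\diag\Big(\sum_{i\in I_0}u_i u_i^*,\;S-\sum_{i\in I_0}u_i u_i^*\Big)\preceq D+\big((1+\sqrt\delta)^2-1\big)\mathbf I_{2d}=D+(2\sqrt\delta+\delta)\mathbf I_{2d},
\]
and comparing the two diagonal blocks yields both $\sum_{i\in I_0}u_i u_i^*\preceq M+(2\sqrt\delta+\delta)\mathbf I$ and $S-\sum_{i\in I_0}u_i u_i^*\preceq(S-M)+(2\sqrt\delta+\delta)\mathbf I$, the latter rearranging to $\sum_{i\in I_0}u_i u_i^*\succeq M-(2\sqrt\delta+\delta)\mathbf I$. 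Hence
\[
\Big\|\sum_{i\in I_0}u_i\otimes u_i-\sum_{i\in I}\tau_i u_i\otimes u_i\Big\|\le 2\sqrt\delta+\delta,
\]
which implies \eqref{aw0} (for $\delta\le1$ one has $2\sqrt\delta+\delta\le 3\,\delta^{1/8}$, and for larger $\delta$ the left side is at most $1$, so \eqref{aw0} holds once $C$ is taken large enough).

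The only point I expect to be genuinely delicate is the second step: Theorem $(MSS)$ controls only the largest eigenvalue of a sum of positive semidefinite rank-one matrices, whereas \eqref{aw0} is a two-sided estimate, so one needs a device that pins $\sum_{i\in I_0}u_i u_i^*$ down from both sides at once. The block-diagonal encoding above, exploiting that $x_i$ and $1-x_i$ play symmetric roles and that $S-\sum_{i\in I_0}u_i u_i^*=\sum_{i\notin I_0}u_i u_i^*$, is exactly what achieves this; everything else (the finite reduction and the padding that normalizes the mean to $\mathbf I_{2d}$) is routine. As a by-product this route replaces the exponent $1/8$ in \eqref{aw0} by $1/2$; the weaker bound in the original argument of Akemann and Weaver comes from proceeding through Weaver's conjecture rather than directly through $(MSS)$.
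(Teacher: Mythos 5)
Your proof is correct, but it is worth pointing out that the paper does not actually prove $(AW)$: it only states the theorem and remarks that the known proof ``relies solely on $(KS_r)$,'' i.e.\ it goes through Weaver's conjecture as in Akemann and Weaver's original derivation (dyadic approximation of the weights $\tau_i$ and iterated Weaver-type partitions), which is precisely where the exponent $1/8$ comes from. Your route is genuinely different: you apply $(MSS)$ directly to the biased two-valued random vectors $\hat u_i$ in $\C^{2d}$, with $\PP(x_i=1)=\tau_i$, and the statement of $(MSS)$ in the paper does allow arbitrary finitely supported, jointly independent distributions, so this is legitimate. The verification goes through: $\sum_i \E{}{\hat u_i\hat u_i^*}=\diag(M,S-M)=:D\le \mathbf I_{2d}$, the deterministic padding vectors $p_k$ with $\|p_k\|^2\le\delta$ (scaled eigenvectors of $\mathbf I_{2d}-D$, exactly as in Lemma \ref{ksinf}) restore the normalization $\sum\E{}{v_iv_i^*}=\mathbf I_{2d}$, and the conclusion $\sum_i\hat u_i\hat u_i^*\le D+(2\sqrt\delta+\delta)\mathbf I_{2d}$ for some outcome, read off block by block, yields both $\sum_{i\in I_0}u_iu_i^*\le M+(2\sqrt\delta+\delta)\mathbf I$ and $\sum_{i\in I_0}u_iu_i^*\ge M-(2\sqrt\delta+\delta)\mathbf I$, hence the two-sided bound $2\sqrt\delta+\delta$, which dominates $C\delta^{1/8}$ trivially ($C=3$ works, and for $\delta\ge 1$ the left side of \eqref{aw0} is at most $1$ since both operators lie between $\mathbf 0$ and $\mathbf I$). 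The block-diagonal encoding is the same device as in the proof of $(MSS)\Rightarrow(KS_r)$, only with biased rather than uniform probabilities, and it is exactly the right mechanism for converting the one-sided conclusion of $(MSS)$ into a two-sided estimate, since the unselected vectors are recorded in the second block. The reduction to finite $I$ and finite dimension via Lemma \ref{pinball}, with the tail estimate against a fixed unit vector (using that $I_0\cap[m]$ and its complement in $[m]$ coincide with $I_0^{(n)}\cap[m]$ and its complement for suitable $n$ in the stabilizing subsequence), is routine and is the same argument as in Lemma \ref{ksinf} and in the proof of $(FEI)$; it would be worth writing out, but there is no gap there. In short: what your approach buys is a self-contained deduction from $(MSS)$ with the stronger bound $O(\sqrt\delta)$; what the route indicated in the paper buys is that it needs only the weaker statement $(KS_r)$, at the cost of the exponent $1/8$.
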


The proof of Theorem $(AW)$ relies solely on $(KS_r)$, and hence we could have added another implication to our diagram $(KS_r) \implies (AW)$. However, we will stop here and instead invite the reader to explore other interesting consequences of the breakthrough solution of the Kadison-Singer problem.

\bibliographystyle{amsplain}

\end{document}